\newcommand{\labbel}{\label}
\newcommand{\starr}{\blackdiamond}
\newtheorem{theorem}{Theorem}[section]
\newtheorem{lemma}[theorem]{Lemma}
\newtheorem{proposition}[theorem]{Proposition} 
\newtheorem{corollary}[theorem]{Corollary} 
\newtheorem*{claim}{Claim} 
\newtheorem*{claim1}{Claim 1} 
\newtheorem*{claim2}{Claim 2} 
\theoremstyle{definition}
\newtheorem{definition}[theorem]{Definition}
\newtheorem{definitions}[theorem]{Definitions}
\newtheorem{problem}[theorem]{Problem}
\theoremstyle{remark}
\newtheorem{remark}[theorem]{Remark}
\newtheorem{example}[theorem]{Example}
\newtheorem{examples}[theorem]{Examples}
\newcommand{\brfr}{$\hspace{0 pt}$}
\newcommand{\brfrt}{\hspace{0 pt}}
\DeclareMathOperator{\cf}{cf}
\newcommand{\iit}{\mathfrak{iit}}
\newcommand{\disc}{\mathfrak{d}}
\newcommand{\ord}{\mathfrak{ord}}
\newcommand{\barb}{\pmb} 
\newcommand{\cupdot}{\ensuremath{\mathaccent\cdot\cup}} 
\newcommand{\bigcupdot}{%
\mathop{%
\vphantom{\bigcup}%
\mathpalette\setbigcupdot\cdot}\displaylimits} 
\newcommand{\setbigcupdot}[2]{\ooalign{\hfil$#1\bigcup$\hfil\cr\hfil$#2$\hfil\crcr}}
\begin{document}
 
\title{Ordinal Compactness}

\author{Paolo Lipparini} 
\address{Dipartimentum  Topologi\ae\ (Stic...) \\Viale della Ricerca Scientifica\\II Universit\`a di Roma (Tor Vergata)\\I-00133 ROME ITALY}
\urladdr{http://www.mat.uniroma2.it/\textasciitilde lipparin}

\keywords{Ordinal, compactness, open cover, topological space, $T_1$, disjoint union, shifted sum} 

\subjclass[2000]{Primary 54D20, 03E10; Secondary 54A05, 54D10}

\begin{abstract}
\ We introduce a new  covering property, defined 
in terms of order types of sequences of open sets, rather than in 
terms of  cardinalities of families. 
The most general form of this compactness notion
depends on two ordinal parameters. In the particular
case when the parameters are cardinal numbers, we get back a classical notion.

Generalized to ordinal numbers, this notion
 turns out to behave in a much more varied way.
We prove many nontrivial results of the form 
``every $[  \alpha , \beta ]$-compact space is $[  \alpha', \beta '  ]$-compact'',
for ordinals
 $\alpha$, $ \beta $,  $\alpha'$ and  $ \beta' $, while
only trivial 
results of the above form hold, if we restrict to cardinals. 
Counterexamples are provided showing that our results are optimal.

We present many examples of spaces satisfying the very same 
cardinal compactness properties, but with a broad range of distinct
 behaviors, with respect to ordinal compactness.
A much more refined  theory is obtained for $T_1$ 
spaces, in comparison with arbitrary topological spaces.
The notion of ordinal compactness becomes  partly trivial
for spaces of small cardinality. 
 \end{abstract} 
 
\maketitle

\section{Introduction} \labbel{intro}
 
The nowadays standard notion of \emph{compactness} for topological spaces is usually expressed in terms of cardinalities of open covers, and asserts that every open cover has
a finite subcover. Since compact spaces constitute  a relatively special class, various weakenings   have been extensively considered, the most notable being     \emph{Lindel\"ofness} (``any open cover has a countable subcover''), and 
\emph{countable compactness} (``any countable open cover has a finite subcover'').
Still more generally, \emph{final $ \kappa $-compactness}
asserts that any open cover has a subcover of cardinality $<\kappa$, and  \emph{initial $ \kappa $-compactness}
asserts that every open cover of cardinality $ \leq \kappa$ has a finite subcover.
A vast literature exists on the subject: see the surveys    \cite{Go,St,Vhstt,Vecgt}, and, as a very subjective and partial choice, 
\cite{bella,ST,shts,T} for more recent lines of research. See also the references there.

In this note we extend the notion of cardinal compactness to ordinals, that is, we take into account order types of families of coverings, rather than just their cardinalities.  Assuming the Axiom of Choice, each cardinal can be seen as an ordinal, thus our notion is more general: when a sequence is cardinal-like ordered, we get back the more usual  notions. On the contrary, and quite surprisingly, it turns out that
our ordinal generalization provides a much finer tuning of compactness properties of topological spaces.

\subsection{A first example: Lindel\"of numbers} \labbel{seclind} 
 Before discussing the most general version of our notion,
let us exemplify it in the particular case of  Lindel\"of numbers. 
Let us define the \emph{Lindel\"of\/$^+$  cardinal} of a topological space $X$ as the smallest
\emph{cardinal} $\lambda$ such that every open cover of $X$ has a subcover of cardinality
$ <  \lambda $
(the superscript $^+$ is  a reminder  that the more common definition
 asks just for a subcover of cardinality
$ \leq  \lambda $. The present variant 
is more convenient here, since  it  
distinguishes between compactness and  Lindel\"ofness).
In other words, the Lindel\"of\/$^+$  cardinal of a topological space is the smallest
cardinal $\lambda$ such that the space is finally $ \lambda $-compact.

As an ordinal generalization of the above notion, let us define the \emph{Lindel\"of ordinal}  of a topological space $X$ as the smallest
\emph{ordinal} $\alpha$  such that, for every  open cover of $X$
whose elements are indexed by some ordinal $\beta$, there exists
some subset $H$ of $\beta$ such that $H$ has order type $<\alpha$, and
the set of elements  with index in $H$ still constitutes a cover of $X$.
Thus we are dealing with covers taken in a certain (well) order
and, when dealing with subcovers, we want  the  order  of  the  original  cover to be respected.

While the Lindel\"of ordinal of a space clearly determines its Lindel\"of\/$^+$ cardinal, 
on the contrary, there are spaces with the same Lindel\"of\/$^+$ cardinal, but with very different
Lindel\"of ordinals. 
As a simple example, if $\kappa$ is a regular uncountable cardinal, 
then $\kappa$, both with the discrete topology, and with the order topology,
has Lindel\"of\/$^+$ cardinal $\kappa^+$. On the other hand, though $\kappa^+$ 
is also the Lindel\"of ordinal of the former space,
the latter space  has  a much smaller Lindel\"of ordinal, that is,  $\kappa+ \omega $
(here and below, $+$  denotes \emph{ordinal sum}). Intermediate 
cases can occur: for example, 
the disjoint union of two copies of $\kappa$ with the order topology
has Lindel\"of ordinal
$ \kappa + \kappa + \omega $. 
We can also have $\kappa+1$, $\kappa+2$, \dots \ as Lindel\"of ordinals,
but only in some pathological cases, and only for spaces satisfying very few
separation properties. 
More involved examples shall be presented in the body of the paper.
Thus our  ordinal generalization can be used to distinguish among
spaces which appear to be quite similar, as far as the cardinal notion is considered.

Imposing further conditions on a space provides some constraints on its
Lindel\"of ordinal. For example, the Lindel\"of ordinal of a countable
space is either $ \omega_1$, or is $ \leq \omega \cdot \omega $.  
For spaces of cardinality $\kappa$, there are similar  limitations,
slightly more involved.
Stronger restrictions are obtained by imposing mild separation axioms.
For example, the Lindel\"of ordinal of a $T_1$ space (of any cardinality)
 is either $ \leq\omega$, or
$ \geq \omega _1$.  
Actually, only ordinals of a very special form can both have cofinality 
$ \omega$ and be the Lindel\"of ordinal
of some $T_1$ space (Corollary \ref{lindt1}).
We also show that, for arbitrary spaces, the Lindel\"of ordinal of a disjoint union is
exactly determined by the Lindel\"of ordinals of the summands.

Summing up, the Lindel\"of ordinal of a topological space appears to
be a quite fine measure of the compactness properties the space satisfies.
Moreover, there are interesting and deep connections
between the possible values the Lindel\"of ordinal can take, and cardinalities
and separation properties of spaces.

\subsection{$[ \mu, \lambda ]$-\brfrt compactness (for cardinals)} \label{lmcpn} 
Now we proceed by considering more general forms of  compactness.
All the (cardinal) compactness properties defined in the first paragraph of this introduction can be unified in a single framework by introducing the following two-cardinals property. 
For cardinals $ \mu \leq \lambda $, a  topological space is said to be 
\emph{$[ \mu, \lambda ]$-\brfrt compact} if and only if every open cover 
by at most $\lambda$ sets has a subcover with $<\mu$ sets.
Thus, for example, compactness is the same as $[ \omega , \lambda ]$-\brfrt compactness, for every cardinal $\lambda$, and Lindel\"ofness is $[ \omega_1 , \lambda ]$-\brfrt compactness, for every cardinal $\lambda$. On the other hand,
countable compactness is $[ \omega , \omega  ]$-\brfrt compactness, and, 
more generally,
initial $\lambda$-compactness 
is $[ \omega , \lambda  ]$-\brfrt compactness.

With a restriction on regular cardinals,
and also in various equivalent forms,
the above two-cardinals version has been introduced in 1929 by 
P. Alexandroff and  P. Urysohn
\cite{AU}. 
 For arbitrary cardinals, the very exact form of the above definition
 seems to have first appeared in \cite{Sm}.
It has been studied by many people, sometimes  under different names
and notations, and in several 
equivalent formulations. See a survey of further related notions and results in \cite{Vfund}.
 See also, e. g., 
\cite{G,LO,Vlnm} and references there for further information.

Apart from intrinsic interest, $ [ \mu, \lambda ]$-\brfrt compactness
 has proved useful in many cases. Besides  providing a common generalization 
of  countable compactness, Lindel\"ofness, and so on, 
it exhibits a very interesting feature:
 $[ \mu, \lambda ]$-\brfrt compactness  is equivalent to $[ \nu, \nu ]$-\brfrt compactness, for every $\nu$ with $\mu \leq \nu \leq \lambda $. 
In particular, (full) compactness is equivalent to
$[ \nu, \nu ]$-\brfrt compactness for every infinite cardinal $\nu$,
and Lindel\"ofness is equivalent to
$[ \nu, \nu ]$-\brfrt compactness for every $\nu > \omega $.
 In other words,
 we can  ``slice''  compactness into smaller pieces.
This fact  has found many applications,
mainly in view of the fact that, for $\nu$ regular, 
$[ \nu, \nu ]$-\brfrt compactness has many
equivalent formulations, most notably in terms
of the existence of accumulation points
of sets of cardinality $\nu$. See \cite{Vlnm}.
 See also \cite{topappl},
where the above mentioned ``slicing'' procedure
has found another substantial application. 

By the way, let us also mention that the notion of
$[ \mu, \lambda ]$-\brfrt compactness has ostensibly inspired some further notions
outside  mainstream general topology. Most notably, some of the
earliest definitions of both weakly and strongly compact cardinals  were introduced as forms
of $[ \kappa , \kappa  ]$-\brfrt compactness
for certain infinitary languages \cite[Chapters 17 and 20]{J}. The exact topological content
of these definitions  later clearly emerged: see \cite{C,Ma}  for history, references,
and for other notions in Model Theory and Logic which have apparently
been inspired by $[ \mu, \lambda ]$-\brfrt compactness. Also the notion
of a $( \mu, \lambda )$-\brfrt regular ultrafilter, which played some role
in the evolution of 
Set Theory \cite{CN}, \cite[Section 13]{KM}, \cite[p. 373]{J}, apparently originated in this stream of ideas.

\subsection{The ordinal generalization} \label{secord} 
Motivated by the interest of (cardinal) $[ \mu, \lambda ]$-\brfrt compactness,
we started considering the possibility of an ordinal generalization. 
Though initially misled by the observation that
``initial $\alpha$-compactness''  actually reduces to 
a cardinal notion (Corollary \ref{initial}), we soon realized that the more general
notion of  ``two  ordinals compactness''  is really new, as exemplified 
above in the particular case of Lindel\"of-like properties or, put in other words, 
final $\alpha$-compactness.

In detail, if $\beta$ and $\alpha$ are ordinals, let us say that a space $X$  
is $[ \beta , \alpha ]$-\brfrt compact
if and only if 
every $\alpha$-indexed open cover has a 
subcover indexed by a set of order type $<\beta$
(in the induced order).

Ordinal compactness, in the above sense, turns out to have 
some very particular features. As in the case of cardinals,
we can show that, also for ordinals,  $[ \beta , \alpha ]$-\brfrt compactness 
is equivalent to $[ \gamma , \gamma  ]$-\brfrt compactness, for every ordinal 
$\gamma$ with $ \beta \leq \gamma \leq \alpha$. However, the similarities
essentially stop here. Indeed, for $\mu \not=\lambda$ infinite regular cardinals, $[ \mu, \mu]$-\brfrt compactness and $[ \lambda , \lambda ]$-\brfrt compactness are independent properties.
On the other hand, for 
 ordinals, we have many
results  which tie together 
$[ \beta , \alpha ]$-\brfrt compactness
and $[ \beta' , \alpha' ]$-\brfrt compactness,
for various $\beta$, $\alpha$, $\beta'$ and $\alpha'$. 
Just to state some of the simplest relations, we have that, for $\alpha$ and $\beta$ infinite ordinals,
  \begin{enumerate}  
\item
If $\beta \leq \alpha $, then $[ \beta  , \alpha  ]$-\brfrt compactness
implies 
$[ \beta  , \alpha +1 ]$-\brfrt compactness.
\item
$[ \beta + \alpha  , \beta + \alpha  ]$-\brfrt compactness
implies 
$[ \beta + \alpha + \alpha  , \beta + \alpha + \alpha  ]$-\brfrt compactness.
\item
$[ \alpha  , \alpha  ]$-\brfrt compactness
implies both
$[ \beta + \alpha  , \beta + \alpha  ]$-\brfrt compactness
and 
$[ \beta \cdot \alpha  , \beta \cdot \alpha  ]$-\brfrt compactness.
  \end{enumerate} 
However, not ``everything'' is provable, even for ordinals having the same cardinality. Indeed, still presenting only some simple examples:
  \begin{enumerate} 
   \item[(4)]   
$[ \alpha +1 , \alpha +1 ]$-\brfrt compactness
does not imply 
$[ \alpha  , \alpha  ]$-\brfrt compactness, in general.
 \item[(5)]   
$[ \kappa + \omega  , \kappa + \omega  ]$-\brfrt compactness
does not imply 
$[ \kappa   , \kappa  ]$-\brfrt compactness, in general.
\item[(6)]  
$ [ \kappa  + \kappa , \kappa  + \kappa  ] $-\brfrt compactness 
does not imply
$ [ \kappa \cdot \kappa , \kappa  \cdot \kappa ] $-\brfrt compactness,
in general.
\end{enumerate}

Thus, ordinal compactness is a highly nontrivial notion,
 in comparison with cardinal compactness.
Moreover, the ordinal compactness properties
of a topological space are deeply affected both by  its cardinality and 
its  separation properties. For example, 
for $\kappa$ an infinite regular cardinal, any counterexample to 
Clause (6) above must be of cardinality $>\kappa$.
On the other hand, no $T_1$ space can be a counterexample to Clause (4).
Considering the compactness properties of disjoint unions
involves some problems on ordinal arithmetic which are not completely trivial.

 $T_1$ spaces turn
out to be a somewhat neat dividing line: many rather odd counterexamples,
 possible in spaces lacking separation properties, cannot be constructed 
using  $T_1$ spaces. 
Thus we provide a quite neat theory for $T_1$  spaces. 
In particular, in this respect, countable ordinals
behave very differently from uncountable ones.
The compactness theory for $T_1$ spaces is trivial on countable ordinals;
more generally,
 apart from a few exceptions, the ordinal properties
of a $T_1$ space are ``invariant'' modulo intervals of countable length. 
Apparently, assuming stronger separation axioms does not seem to modify the theory a lot; at large, we get
essentially the same results and counterexamples for $T_1$ and for normal spaces.
However, there is still room for the possibility of some finer results 
holding only for normal spaces; this is left as an open problem.

\subsection{Synopsis of the paper} \label{summary} 
In summary, the paper is divided as follows. In Section \ref{basic} 
we introduce the main definition, together with some relatively simple properties and a couple of equivalent reformulations. Then we prove many results of the form
``every $[  \alpha , \beta ]$-compact space is $[  \alpha', \beta '  ]$-compact''; 
most of these results shall be used in the rest of the paper.
In Section \ref{examples} we then provide a lot of examples, showing
that $[ \beta , \alpha ]$-\brfrt compactness, for $\alpha$ and $\beta$ ordinals, 
provides a very fine tuning of  properties of open coverings: there are many spaces
which show a very differentiated behavior with respect to ordinals, but behave exactly the same way, when $\alpha$ and $\beta$ are taken to vary only on cardinals.
We also show that many of the results of Section \ref{basic}
are the best possible ones. The most basic examples are presented in Subsection \ref{basicex}; then in Subsection \ref{disgsec} we  discuss the behavior of ordinal compactness with respect to disjoint unions, and show that many more counterexamples can be obtained in such a way. We also introduce 
a generalized form of infinite disjoint union with a partial compactification.
Compactness properties of disjoint unions are shown to be connected to some notions
in ordinal arithmetics related to  natural sums of  ordinals. 
Such matters are clarified in detail in Subsection 
\ref{rmkonsh}.   
 
In Section \ref{small} we show that many more implications 
between compactness properties hold, for spaces of small cardinality; put
in another way, certain counterexamples can be constructed only by means of spaces of
sufficiently large cardinality. Such counterexamples are indeed provided in 
Section \ref{exact}, where we give an exact characterization
of those pairs of ordinals $\alpha$ and $\beta$ such that 
$[ \alpha, \alpha ]$-\brfrt compactness implies
$[ \beta, \beta ]$-\brfrt compactness.
In Section \ref{t1sec} we then get a more refined theory, which holds
for $T_1$ spaces. For such spaces,
$[ \beta , \alpha ]$-\brfrt compactness becomes trivial for countably infinite ordinals
(Corollary \ref{t1cor}).
More generally, with a few exceptions,  ordinal compactness  for $T_1$ spaces is invariant modulo
intervals of countable length. Finally, Section 
\ref{concl} contains  various quite disparate remarks and problems. In particular, it introduces further generalizations of ordinal compactness, and also discusses the possibility of 
a variant in a model theoretical sense. 

\smallskip

The present note by no means exhausts all that can be said about $[ \beta, \alpha ]$-\brfrt compactness. Furthermore, as we mentioned, the notion of $[ \beta, \alpha ]$-\brfrt compactness can be also generalized to different contexts.

\section{Main definition and basic properties} \labbel{basic} 

In this section we introduce our main notion, and state some simple properties.
We compare it with the more usual notion which deals only with cardinals; then
we start proving results of the form ``every $[ \beta, \alpha ]$-\brfrt compact space is
$[ \beta', \alpha' ]$-\brfrt compact'', for appropriate ordinals. For cardinal compactness, only trivial results of the above kind hold. In the subsequent sections we shall present counterexamples showing  that
our results cannot be improved.

Throughout, let $\alpha$, $\beta$ and $\gamma$ be nonzero ordinals, 
and $\lambda$, $\mu$ be nonzero cardinals. As custom,
we shall assume the Axiom of Choice, hence we can identify cardinals 
with initial ordinals.

\begin{definition} \labbel{ordcpn}
If $X$ is a nonempty set (usually, but not necessarily, a topological space),
and $\tau$ is a nonempty family of subsets of $X$, we say that 
$(X, \tau)$ is \emph{$[ \beta, \alpha ]$-\brfrt compact} if and only if the following condition holds.

Whenever $(O_ \delta ) _{ \delta \in \alpha } $ is a sequence of 
members of $\tau$ such that 
$ \bigcup  _{ \delta \in \alpha } O_ \delta = X $,
then there is $H \subseteq  \alpha $ with
order type $<\beta$ and such that
$ \bigcup  _{ \delta \in H } O_ \delta = X $.

If there is no danger of confusion, we shall  simply say $X$ in place of
$(X, \tau)$. As usual, a sequence $(O_ \delta ) _{ \delta \in \alpha } $  of 
members of $\tau$ such that 
$ \bigcup  _{ \delta \in \alpha } O_ \delta = X $ shall be called 
a \emph{cover} of $X$. A \emph{subcover} of  $(O_ \delta ) _{ \delta \in \alpha } $ is a subsequence which itself is a cover.

By
\emph{$[ \beta, \alpha )$-\brfrt compactness} we mean
$[ \beta, \alpha' ]$-\brfrt compactness
for all $\alpha' < \alpha $. The notation is justified by 
Proposition \ref{simple}(4) below. 
Another notation for $[ \beta, \alpha )$-\brfrt compactness is
$[ \beta, < \alpha ]$-\brfrt compactness.

Finally, \emph{$[ \beta, \infty)$-\brfrt compactness} is
$[ \beta, \alpha ]$-\brfrt compactness for all ordinals $\alpha \geq \beta $.\end{definition}   

When $\alpha$ and $\beta$ are both cardinals, and $X$ 
is a topological space ($\tau$ being always understood to be the topology on $X$),
we get back the classical cardinal  compactness notion of Alexandroff,  Urysohn and Smirnov \cite{AU,Sm}.
This is because, for $\lambda$ a cardinal, having order type $<\lambda$ is the 
same as having cardinality $<\lambda$. 

Notice that we allow repetitions in 
$(O_ \delta ) _{ \delta \in \alpha } $, that is,
we allow the possibility that
$O_ \delta = O _{ \delta '}  $, for
$\delta \not = \delta' $.
An equivalent and  sometimes useful
definition in which (among other things) repetitions 
are not allowed is given by Lemma \ref{irredundant}.
We have given the  definition in the present form since it appears somewhat simpler.

\begin{remark} \labbel{ordset}
In the definition of $[ \beta, \alpha ]$-\brfrt compactness,
the assumption that the sequence is indexed by elements in the ordinal $\alpha$
is only for convenience. We get an equivalent definition by asking that,
for every well ordered set $J$ of order type $\alpha$,
if   
$(O_ j ) _{ j \in J } $ is a cover of $X$,
then there is $H \subseteq J$ such that the order type
of $H$ (under the order induced by the order on $J$) is 
$< \beta$, and such that
$(O_ j ) _{ j \in H } $ is a cover of $X$.
 \end{remark}   

Of course, $[ \beta, \alpha ]$-\brfrt compactness is equivalent to the following condition (just take complements!).
Whenever $(C_ \delta ) _{ \delta \in \alpha } $ is a sequence of 
complements of members of $\tau$,
and $ \bigcap  _{ \delta \in H } C_ \delta \not= \emptyset  $,
for every  $H \subseteq  \alpha $ with
order type $<\beta$,
then 
 $ \bigcap  _{ \delta \in \alpha } C_ \delta \not= \emptyset $.

As we shall see below in Remarks \ref{w} and \ref{samecard}, ordinal compactness is actually 
a new notion, that is,  it cannot be defined in terms of 
cardinal compactness.

 We first list some simple but useful properties of 
$[ \beta, \alpha ]$-\brfrt compactness.

\begin{proposition} \labbel{simple}
Let $\alpha$ and $\beta$ be nonzero ordinals.
\begin{enumerate} 
\item If $\beta \leq \beta '$ and $ \alpha' \leq \alpha$ then
$[ \beta, \alpha ]$-\brfrt compactness implies $[ \beta', \alpha' ]$-\brfrt compactness. 
\item 
$[ \beta, \alpha ]$-\brfrt compactness is equivalent to $[ \gamma , \gamma  ]$-\brfrt compactness
for every $\gamma$ with $\beta \leq \gamma \leq \alpha $. 
\item
If $ \beta \leq \beta' \leq \alpha $, then  $X$ is
$[ \beta, \alpha ]$-\brfrt compact if and only if $X$ is 
both  $[ \beta, \beta ' )$-\brfrt compact and $[ \beta', \alpha ]$-\brfrt compact.
\item 
$[ \beta, \alpha )$-\brfrt compactness is equivalent to $[ \gamma , \gamma  ]$-\brfrt compactness
for every $\gamma$ with $\beta \leq \gamma < \alpha $. 
\end{enumerate}  
 \end{proposition}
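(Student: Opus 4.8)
My plan is to prove the four clauses in the order (1), (2), (4), (3), since each later clause will follow formally from the earlier ones (the substantive cases being $\beta\le\alpha$, as otherwise the whole cover already has order type $<\beta$ and everything is trivial). For clause (1) I would separate the two monotonicities. Enlarging $\beta$ is immediate: a subcover whose index set has order type $<\beta$ automatically has order type $<\beta'$, and the index set $\alpha$ is unchanged, so $[\beta,\alpha]$-compactness gives $[\beta',\alpha]$-compactness. Shrinking $\alpha$ is the only part needing an argument. Given a cover $(O_\delta)_{\delta\in\alpha'}$ with $\alpha'\le\alpha$, I would pad it to an $\alpha$-indexed cover by setting $\tilde O_\delta=O_\delta$ for $\delta<\alpha'$ and $\tilde O_\delta=O_0$ for $\alpha'\le\delta<\alpha$ (here $0\in\alpha'$ since $\alpha'$ is nonzero). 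Applying $[\beta,\alpha]$-compactness yields $H\subseteq\alpha$ of order type $<\beta$ with $\bigcup_{\delta\in H}\tilde O_\delta=X$. If $H\subseteq\alpha'$ I keep $H$; otherwise I take $H'=(H\cap\alpha')\cup\{0\}$ and note that the discarded indices all point to $O_0$, so $(O_\delta)_{\delta\in H'}$ still covers $X$. The one genuinely delicate point is the order-type bookkeeping: since $\alpha'$ is an initial segment of $\alpha$, $H\cap\alpha'$ is an initial segment of $H$, and in the nontrivial case $H$ has at least one index above it; hence, using $1+\sigma\le\sigma+1$, the order type of $H'$ is at most $\mathrm{otype}(H\cap\alpha')+1\le\mathrm{otype}(H)<\beta$. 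Composing the two monotonicities gives (1).

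Clause (2) is the heart of the matter. The forward direction is just clause (1) applied with $\beta'=\gamma$ and $\alpha'=\gamma$ for each $\gamma\in[\beta,\alpha]$. For the converse I would argue by iterated diagonal reduction. Assume $X$ is $[\gamma,\gamma]$-compact for every $\gamma\in[\beta,\alpha]$ and take an $\alpha$-indexed cover. Setting $H_0=\alpha$ (of order type $\alpha$), as long as the current index set $H_i\subseteq\alpha$ has order type $\gamma_i$ with $\beta\le\gamma_i\le\alpha$, I view $(O_\delta)_{\delta\in H_i}$ as a cover indexed by a well-ordered set of order type $\gamma_i$ and apply $[\gamma_i,\gamma_i]$-compactness, via Remark \ref{ordset}, to obtain $H_{i+1}\subseteq H_i$ with $\mathrm{otype}(H_{i+1})=\gamma_{i+1}<\gamma_i$. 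The main obstacle is ensuring this process terminates below $\beta$, and here the key observation is that the $\gamma_i$ form a strictly decreasing sequence of ordinals; by well-foundedness it cannot be infinite, so at some finite stage $\gamma_n<\beta$, giving the desired subcover of order type $<\beta$.

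The remaining two clauses are then formal. For (4), I unwind the definition: $[\beta,\alpha)$-compactness means $[\beta,\alpha']$-compactness for all $\alpha'<\alpha$, and by (2) each of these is equivalent to $[\gamma,\gamma]$-compactness for all $\gamma\in[\beta,\alpha']$; taking the union over $\alpha'<\alpha$ of the index ranges gives exactly $\{\gamma:\beta\le\gamma<\alpha\}$, which is the assertion of (4). For (3), I apply (2) to $[\beta,\alpha]$ and to $[\beta',\alpha]$, and (4) to $[\beta,\beta')$; this reduces all three properties to diagonal compactness $[\gamma,\gamma]$ over the ranges $[\beta,\alpha]$, $[\beta',\alpha]$ and $\{\gamma:\beta\le\gamma<\beta'\}$ respectively, and since $\beta\le\beta'\le\alpha$ these last two ranges partition $[\beta,\alpha]$, so the conjunction of the two right-hand properties is equivalent to $[\beta,\alpha]$-compactness. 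I expect the only real work to lie in clauses (1) and (2); clauses (3) and (4) are bookkeeping on index ranges.
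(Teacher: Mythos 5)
Your proposal is correct and follows essentially the same route as the paper: clause (1) by padding the shorter cover with repeated occurrences of an element already in it, the converse of clause (2) by repeatedly shrinking via diagonal $[\gamma,\gamma]$-compactness (the paper packages this iteration as a transfinite induction on $\gamma$, you as an explicitly finite strictly decreasing chain of order types --- the same well-foundedness argument), and clauses (3) and (4) as formal consequences of (2). There are no gaps; your extra order-type bookkeeping in (1) and the termination argument in (2) merely spell out what the paper compresses.
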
 

 \begin{proof} 
(1) is trivial. If $\alpha' < \alpha $, add dummy elements at the top of  the sequence, for example, by adding new occurrences of one element already in the sequence.

One implication in (2) is immediate from (1). 
 
The converse is obtained by transfinite induction.
Suppose that $X$ is $[ \gamma , \gamma  ]$-\brfrt compact,
for every $\gamma$ with $\beta \leq \gamma \leq \alpha $.
We shall prove $[ \beta  , \gamma  ]$-\brfrt compactness,
for every $\gamma$ with $\beta \leq \gamma \leq \alpha $, by induction on $\gamma$.
The induction basis $\gamma= \beta $ is true by assumption.
As for the induction step, let $ \beta  < \gamma \leq \alpha $, and assume 
that $X$ 
is $[ \beta  , \gamma'  ]$-\brfrt compact, for every $\gamma'$ with $ \beta \leq \gamma' < \gamma $. 
 Let $(O_ \delta ) _{ \delta \in \gamma  } $ be a cover of $X$.
By $[ \gamma , \gamma  ]$-\brfrt compactness, $(O_ \delta ) _{ \delta \in \gamma } $
has a subcover $\mathcal S$ whose index set has  order type $ \gamma' <\gamma$. If
 $ \gamma '< \beta $, we are done. Otherwise, by 
$[ \beta  , \gamma'  ]$-\brfrt compactness, 
and Remark \ref{ordset}, we get a subcover of $\mathcal S$ whose index set has order type $< \beta $, and
the item is proved.    

(3) The only if  condition is immediate from (1).
For the converse,  notice that, again by (1),
$[ \beta, \beta ' )$-\brfrt compactness implies 
$[ \gamma , \gamma  ]$-\brfrt compactness
for every $\gamma$ with $\beta \leq \gamma < \beta'  $,
 and that $[ \beta', \alpha ]$-\brfrt compactness 
implies 
$[ \gamma , \gamma  ]$-\brfrt compactness
for every $\gamma$ with $\beta' \leq \gamma \leq \alpha   $.

Thus we get $[ \gamma , \gamma  ]$-\brfrt compactness,
for every $\gamma$ with $\beta \leq \gamma \leq \alpha   $,
hence $[ \beta, \alpha ]$-\brfrt compactness, by (2).

(4) is immediate from (2).
\end{proof} 

\begin{remark} \labbel{ordcard} 
When $\alpha$, $\beta$, $\alpha'$ \dots are restricted to vary only on cardinals,
 rather than ordinals, Proposition \ref{simple} still holds, with the same proof. In fact, for infinite cardinals, (1) and (2) are classical results about $[ \mu, \lambda  ]$-\brfrt compactness.
Again for infinite cardinals, it is well known (and easy to prove)
that, for topological spaces, $[ \cf \mu, \cf \mu]$-\brfrt compactness
implies $[  \mu,  \mu]$-\brfrt compactness.
An ordinal generalization of the above  fact will be given
in Corollary \ref{cortransfer}(8). 

For infinite regular cardinals, there is no other nontrivial implication
 between $[ \mu , \lambda ]$-\brfrt compactness and
$[ \mu', \lambda' ]$-\brfrt compactness, except for those which follow immediately
from the above mentioned facts. Indeed, if
 $\lambda $ is a 
regular infinite cardinal, then $\lambda$, with the order topology,
is not $[ \lambda , \lambda ]$-\brfrt compact, but it 
is $[ \mu, \mu]$-\brfrt compact for every infinite cardinal 
$\mu \not = \lambda $. Hence, 
if $\mu \leq \mu'$ are infinite cardinals, then 
 $\lambda$ with the order topology
is $[ \mu, \mu']$-\brfrt compact if and only if $ \lambda \not\in [ \mu, \mu']$.
(Here and in what follows
$[ \mu, \mu']$ shall denote the interval consisting of those ordinals $\delta$ such that 
$\mu \leq \delta \leq \mu '$.) 
More generally, the exact ordinal compactness properties of $\lambda$ (with various  topologies)
shall be determined in 
Example \ref{exex}. 

Contrary to the case of cardinal compactness, and quite surprisingly,  there are many nontrivial
``transfer properties'' for ordinal compactness, relating $[ \beta, \alpha ]$-\brfrt compactness and
$[ \beta', \alpha' ]$-\brfrt compactness, for various 
$\beta$, $\alpha$, $\beta'$ and $\alpha'$.
The next proposition and its corollary list some simple relations.
More significant results along this line, and some characterizations
 shall be proved in Section \ref{exact}.
\end{remark} 

\begin{proposition} \labbel{transfer}
Suppose that $\beta$, $\alpha$, $\beta'$ and $\alpha'$
are nonzero ordinals, and that there exists an injective function
$f: \alpha ' \to \alpha $ 
such that, for every  $K \subseteq \alpha $ with order type 
$<\beta$, it happens that $f ^{-1} (K)$ has order type $< \beta' $.

Then  $[ \beta, \alpha ]$-\brfrt compactness implies
$[ \beta', \alpha' ]$-\brfrt compactness.

The assumption that $f$ is injective can be dropped
in the case of topological spaces (or just assuming that $\tau$ is  closed under  unions).
\end{proposition}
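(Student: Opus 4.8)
The plan is to prove the implication by \emph{transporting} a cover along $f$. Starting from an $\alpha'$-indexed cover of $X$, I would manufacture an $\alpha$-indexed cover, apply $[\beta,\alpha]$-compactness to thin it to a subcover indexed by some $H\subseteq\alpha$ of order type $<\beta$, and then pull $H$ back through $f$ to index a subcover of the original family by $H'=f^{-1}(H)$. The construction is arranged so that the hypothesis on $f$, namely that $f^{-1}(K)$ has order type $<\beta'$ whenever $K\subseteq\alpha$ has order type $<\beta$, converts order type below $\beta$ in $\alpha$ into order type below $\beta'$ in $\alpha'$, which is exactly the inequality defining $[\beta',\alpha']$-compactness.

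Concretely, given a cover $(O_\epsilon)_{\epsilon\in\alpha'}$ of $X$, I would set $P_\delta=\bigcup_{\epsilon\in f^{-1}(\delta)}O_\epsilon$ for each $\delta\in\alpha$, so that a genuine position $\delta$ in the range of $f$ collects the sets sent there by $f$, while each position outside the range of $f$ is a dummy. Since every $O_\epsilon$ reappears inside $P_{f(\epsilon)}$, the family $(P_\delta)_{\delta\in\alpha}$ again unions to $X$, hence is a cover. This is exactly where the two regimes of the statement diverge: when $f$ is injective each nonempty fibre $f^{-1}(\delta)$ is a singleton, so $P_\delta$ is literally one of the $O_\epsilon$ and lies in $\tau$ with no extra hypothesis; for non-injective $f$ the set $P_\delta$ is a genuine union of several members of $\tau$, which is precisely why one must then assume $\tau$ closed under unions, automatic for topological spaces.

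Having produced the $\alpha$-cover, I would invoke $[\beta,\alpha]$-compactness to obtain $H\subseteq\alpha$ of order type $<\beta$ with $\bigcup_{\delta\in H}P_\delta=X$, put $H'=f^{-1}(H)$, and read off order type $<\beta'$ for $H'$ straight from the hypothesis on $f$. It then remains to verify $\bigcup_{\epsilon\in H'}O_\epsilon=X$: for each $\delta\in H$ lying in the range of $f$, every $O_\epsilon$ feeding $P_\delta$ has $\epsilon\in f^{-1}(\delta)\subseteq H'$, while dummy positions contribute nothing, so the $H'$-indexed subfamily recovers the whole union. By Remark~\ref{ordset} I may regard this subcover as indexed within $\alpha'$, and that is exactly the $[\beta',\alpha']$-subcover required.

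The crux, and the place where the injective-versus-union dichotomy really bites, is the handling of the dummy positions. Filling a dummy with $\emptyset$ is harmless, since it contributes nothing and lets $H'=f^{-1}(H)$ work verbatim, but this is legitimate only when $\emptyset\in\tau$, which holds for topological spaces and, through the empty union, whenever $\tau$ is closed under unions. For an injective $f$ over an arbitrary family $\tau$, I would instead fill every dummy with a fixed repeat $O_{\epsilon_0}$, taking $\epsilon_0=\min\alpha'$; the only cost is that $H'$ may have to be enlarged to $f^{-1}(H)\cup\{\epsilon_0\}$ when some dummy position is actually used, and I expect the one delicate point to be checking that this enlargement respects the order-type bound. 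Adjoining the minimum $\epsilon_0$ raises order type only from $\gamma$ to $1+\gamma$, which equals $\gamma$ once $\gamma$ is infinite, so the bound $<\beta'$ survives except in the degenerate situation of finite $\beta'$, a case best dispatched separately. Beyond this bookkeeping the argument is routine, the real content being the exact match between the hypothesis on $f$ and the definition of ordinal compactness.
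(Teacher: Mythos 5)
Your overall strategy coincides with the paper's: transport the $\alpha'$-indexed cover along $f$, thin it using $[\beta,\alpha]$-compactness, and pull the extracted index set back through $f$, with the hypothesis on $f$ supplying the order-type bound. The divergence lies in how indices outside the range of $f$ are handled, and that is exactly where your proof has a hole. The paper never needs dummy values at the extraction stage: it restricts the transported family to the index set $f(\alpha')$, a well-ordered set of some order type $\alpha''\leq\alpha$, applies $[\beta,\alpha'']$-compactness (Proposition \ref{simple}(1)) to it via Remark \ref{ordset}, and so obtains $K\subseteq f(\alpha')$ of order type $<\beta$; then $H=f^{-1}(K)$ already indexes a subcover of the original cover, and the hypothesis on $f$ applies verbatim --- for every nonzero $\beta'$, finite or infinite.

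Your device of filling the dummy positions with $O_{\epsilon_0}$ forces the pulled-back index set to be enlarged to $f^{-1}(H)\cup\{\epsilon_0\}$, and, as you yourself note, the resulting bound can fail when $\beta'$ is finite: from order type $n<\beta'=n+1$ for $f^{-1}(H)$ you only get order type $\leq n+1\not<\beta'$ after adjoining $\epsilon_0$. The statement does allow finite nonzero $\beta'$, and the promised ``separate dispatch'' is never supplied, so the main clause of the proposition (injective $f$, arbitrary nonempty $\tau$) is not fully proved as written. The case can be closed, but it takes an argument: if $\beta'$ is finite and $\beta>\beta'$, then applying the hypothesis to $K=f(S)$ for any subset $S\subseteq\alpha'$ with exactly $\beta'$ elements shows that no such $f$ can exist unless $\alpha'<\beta'$, in which case the conclusion is trivial; if $\beta\leq\beta'$ are both finite, then $H$ is finite, and when $H$ contains a dummy position $\delta_d$ one applies the hypothesis instead to $K=(H\setminus\{\delta_d\})\cup\{f(\epsilon_0)\}$, which has order type $\leq$ that of $H$, hence $<\beta$, and satisfies $f^{-1}(K)=f^{-1}(H)\cup\{\epsilon_0\}$. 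Alternatively --- and this is the cleaner repair --- abandon the filling altogether and restrict to the image of $f$ as the paper does; that also removes your reliance on $\emptyset\in\tau$ (the ``empty union'') in the union-closed case.
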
  

\begin{proof}
Suppose that $(X, \tau )$ is $[ \beta, \alpha ]$-\brfrt compact, and let $f$ be given
satisfying the assumption.
Let $(O_ \delta ) _{ \delta \in \alpha' } $ be a cover of $X$,
and let $(U_ \varepsilon  ) _{ \varepsilon  \in \alpha } $ 
be defined by 
$U _ \varepsilon = O_ \delta  $, if $f( \delta )= \varepsilon $,
and arbitrarily, if $\varepsilon$ is not in the image of $f$.
The definition is well posed, since $f$ is injective.
Let $\alpha''$ be the order type of $f(\alpha')$  

$(U_ \varepsilon  ) _{ \varepsilon  \in f(\alpha') } $ is still a cover of $X$, hence, 
by $[ \beta, \alpha'' ]$-\brfrt compactness
 (which follows from 
$[ \beta, \alpha ]$-\brfrt compactness, 
by Proposition \ref{simple}(1)),
and by Remark \ref{ordset},  
there is $K \subseteq \alpha $
of order type $<\beta$ and such that  
$(U_ \varepsilon  ) _{ \varepsilon  \in K } $ still
covers $X$. If we put
$H=f ^{-1}(K) $, then, by assumption, 
$H$ has order type $<\beta'$;
moreover, $(O_ \delta ) _{ \delta \in H } $
is a cover of $X$, hence 
$[ \beta', \alpha' ]$-\brfrt compactness is proved.

In case $\tau$ is closed under unions, and $f$ is not injective, 
define
$U _ \varepsilon = \bigcup _{f( \delta )= \varepsilon } O_ \delta  $,
and the same argument carries over.
 \end{proof}

In what follows,  if not otherwise specified, the operation $+$ will 
  denote \emph{ordinal sum}. That is, $\alpha+ \beta $  is the order type of  
the order obtained by attaching a copy of $\beta$ ``at the top''  of  $\alpha$.
Similarly, $\cdot$ denotes \emph{ordinal product}. 

The next corollary provides a sample of results
that can be proved about the relationship between
$[ \beta,   \alpha  ]$-\brfrt compactness,
and $[ \beta',  \alpha'  ]$-\brfrt compactness,
for various ordinals. Most of them shall be used  in the rest of the paper.

\begin{corollary} \labbel{cortransfer}
Supose that $\alpha$, $\beta$ and $\gamma$ are nonzero ordinals, and
$\lambda$, and $\nu$  are cardinals. 
\begin{enumerate}
\item 
If $\beta \leq \alpha $, 
and $\alpha$ is infinite, 
then
$[ \beta, \alpha ]$-\brfrt compactness implies
$[ \beta, \alpha + 1 ]$-\brfrt compactness, 
hence also $[ \beta, \alpha + n ]$-\brfrt compactness, 
for each $ n < \omega$.
\item 
If either $\gamma$ or $\alpha$ is infinite, then
$[ \gamma +\alpha, \gamma +\alpha ]$-\brfrt compactness implies
$[ \gamma +\alpha   +\alpha, \gamma + \alpha + \alpha  ]$-\brfrt compactness, hence 
also $[ \gamma +\alpha \cdot n  , \gamma + \alpha \cdot n  ]$-\brfrt compactness,
for each $ n < \omega$.
\item
If $ \beta \leq \alpha $, $\alpha$ is infinite, and $\lambda= \cf \alpha $, then  $[ \beta , \alpha ]$-\brfrt compactness implies
$[ \beta  , \alpha + \lambda \cdot \omega )  $-\brfrt compactness.
\item 
  If   $ \beta \leq\lambda $, 
then 
$[ \beta, \lambda  ]$-\brfrt compactness implies
$[ \beta, \lambda ^+ )$-\brfrt compactness.
\item
If $ \beta \leq \alpha  + \lambda $, 
and  either $ \cf \alpha > \lambda $, 
or $\alpha$ can be written as a limit of ordinals of
cofinality $> \lambda $,  
then 
$[ \beta, \alpha  + \lambda  ]$-\brfrt compactness implies
$[ \beta,   \alpha + \lambda ^+) $-\brfrt compactness.
  \end{enumerate} 
Suppose further that $\tau$ is closed under unions. Then:
  \begin{enumerate}     \setcounter{enumi}{5}
 \item
$[\alpha , \alpha ]$-\brfrt compactness implies $[ \beta +\alpha , \beta +\alpha ]$-\brfrt compactness. 
\item 
$[\alpha , \alpha ]$-\brfrt compactness implies 
$[ \beta \cdot \alpha , \beta \cdot \alpha ]$-\brfrt compactness. 
\item 
If  $\cf \alpha = \nu$ is infinite,
then
$[\nu , \nu ]$-\brfrt compactness implies 
$[  \alpha ,  \alpha ]$-\brfrt compactness. 
 \end{enumerate}  
 \end{corollary}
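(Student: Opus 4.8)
The plan is to read the whole corollary off Proposition~\ref{transfer}, with Proposition~\ref{simple} doing the bookkeeping. For each asserted implication ``$[\beta,\alpha]$-compact $\Rightarrow[\beta',\alpha']$-compact'' I would produce one reindexing function $f\colon\alpha'\to\alpha$ and check the sole hypothesis of Proposition~\ref{transfer}: that $f^{-1}(K)$ has order type $<\beta'$ whenever $K\subseteq\alpha$ has order type $<\beta$ (using Remark~\ref{ordset} to identify an index set of the correct order type with an ordinal). For clauses (1)--(5) the map must be injective; for (6)--(8), where $\tau$ is closed under unions, I may use the non-injective version of Proposition~\ref{transfer}, which makes those three much the cleanest.

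I would dispose of (6), (7), (8) first, since each conclusion is a single diagonal instance and needs no reassembly. For (7) let $f$ send the $\xi$-th length-$\beta$ block of $\beta\cdot\alpha$ to the point $\xi\in\alpha$; then $f^{-1}(K)$ is a union of $\operatorname{otp}(K)$ blocks, of order type $\beta\cdot\operatorname{otp}(K)<\beta\cdot\alpha$ by left-monotonicity of ordinal product. For (6) let $f$ collapse the initial $\beta$-block to the point $0$ and be the identity on the $\alpha$-tail, so $\operatorname{otp}(f^{-1}(K))$ is $\beta+\operatorname{otp}(K)$ or $\operatorname{otp}(K)$, both $<\beta+\alpha$. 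For (8) fix a cofinal increasing sequence $(\alpha_i)_{i<\nu}$ in $\alpha$ and let $f$ send $[\alpha_i,\alpha_{i+1})$ to $i$; since $\nu=\cf\alpha$ is regular, a $K\subseteq\nu$ of order type $<\nu$ is bounded, say by $j$, whence $f^{-1}(K)\subseteq[0,\alpha_{j+1})$ has order type $<\alpha$.

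For the injective clauses the recurring difficulty is that an injection $\alpha'\to\alpha$ with $\alpha'>\alpha$ must recycle indices, and recycling a \emph{top} index forces an additive overshoot in the pulled-back order type: the natural ``shift the initial $\omega$'' map $f\colon\alpha+1\to\alpha$ only yields $\operatorname{otp}(f^{-1}(K))\le\operatorname{otp}(K)+1$, which is useless against a fixed successor $\beta$. The device that saves this is to invoke Proposition~\ref{simple}(2) first and reduce the target to its single \emph{new} diagonal instance: to prove (1) it suffices to show $[\alpha,\alpha]\Rightarrow[\alpha+1,\alpha+1]$, and now the shift map works, because $\operatorname{otp}(K)<\alpha$ gives $\operatorname{otp}(K)+1\le\alpha<\alpha+1$, so the overshoot is absorbed by the enlarged target; the full interval $[\beta,\alpha+1]$ is then reassembled by Proposition~\ref{simple}(2) and iterated to $\alpha+n$. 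Clause (2) I would treat analogously but directly (its target is again the full length $\gamma+\alpha+\alpha$), replacing the shift by an \emph{interleaving} of the doubled $\alpha$-block into a single $\alpha$-block, which exists because $\gamma$ or $\alpha$ is infinite; the $\alpha\cdot n$ version follows by iterating with $\gamma$ replaced by $\gamma+\alpha\cdot(n-1)$.

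Clauses (3), (4), (5) are the ``next level up'' results and carry the real weight; after the same reduction to new diagonal instances $[\gamma,\gamma]$, clause (4) becomes transparent: for $\lambda\le\gamma<\lambda^+$ \emph{any} bijection $f\colon\gamma\to\lambda$ works, since $\beta\le\lambda$ with $\lambda$ a cardinal forces every order-type-$<\beta$ set $K$ to have cardinality $<\lambda$, so $f^{-1}(K)$ has cardinality $<\lambda=|\gamma|$ and hence order type $<\gamma$. Clause (3) I would get by reflecting the extra $\lambda\cdot\omega$-tail cofinally into $\alpha$ along a cofinal $\lambda$-sequence (here $\lambda=\cf\alpha$) and combining with (1). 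The main obstacle I expect is clause (5): the compressing map must spread the extra $\delta$-tail (with $\lambda\le\delta<\lambda^+$) through $\alpha+\lambda$ along a witness to the cofinality hypothesis, and the delicate point is to verify that the tail of \emph{every} order-type-$<\beta$ set reflects strictly below $\delta$, so that the pullback stays below $\alpha+\delta$ rather than reaching it (which a careless full-tail set would cause, given that $\delta$ may absorb $\alpha$). This is exactly where $\cf\alpha>\lambda$ (or $\alpha$ a limit of ordinals of cofinality $>\lambda$) is indispensable, and the ordinal-arithmetic check that $\operatorname{otp}(K\cap\alpha)$ plus the reflected tail stays $<\alpha+\delta$ is the one genuinely delicate computation of the corollary.
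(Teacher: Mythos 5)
Your plan follows the paper's own proof essentially step for step: every clause is obtained from Proposition~\ref{transfer} after using Proposition~\ref{simple} to reduce to the new diagonal instances, and your reindexing maps (the initial-$\omega$ shift for (1), the even/odd interleaving for (2), the injection-plus-cardinality argument for (4) and (5), and the collapsing maps for (6)--(8)) are the same ones the paper constructs, with the same cofinality computations carrying the weight in (3) and (5). The only divergence is cosmetic: in (3) the paper places the extra tail at an initial segment of $\alpha$ using $\delta+\alpha=\alpha$ (falling back on clause (2) in the exceptional case $\delta+\alpha>\alpha$) rather than reflecting it cofinally, and in the degenerate case of (2) where $\alpha$ is finite it appeals to (1) instead of an interleaving; both variants go through.
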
 

\begin{proof} 
(1) In view of Proposition \ref{simple}(1), 
and since $\beta \leq \alpha $,
$[ \beta, \alpha ]$-\brfrt compactness implies
$[ \alpha , \alpha  ]$-\brfrt compactness.
In view of Proposition \ref{simple}(3),  it is then enough to show that 
$[ \alpha , \alpha  ]$-\brfrt compactness
implies $[ \alpha +1, \alpha+1  ]$-\brfrt compactness.

The latter is proved by applying
 Proposition \ref{transfer} to the function $f: \alpha + 1 \to \alpha $
 defined as follows.
\[ 
f( \varepsilon )=\begin{cases}
0&    \text{if  $ \varepsilon = \alpha $},\\
\varepsilon + 1&    \text{if  $ \varepsilon < \omega$},\\
\varepsilon &    \text{if  $\omega \leq \varepsilon < \alpha  $}.\\
\end{cases}
\] 

(2) If $\alpha$ is finite, then  $\gamma$ is infinite, and the result follows from (1).

Otherwise, suppose that $\alpha= \alpha ' + n$, with $\alpha'$ limit
and $ n<\omega$.
Thus $\gamma+ \alpha + \alpha = \gamma + \alpha ' + \alpha ' + n $. 
Consider the following
function $f: \gamma + \alpha + \alpha  \to \gamma  + \alpha    $.
\[ 
f( \varepsilon )=\begin{cases}
\varepsilon &    \text{if  $ \varepsilon < \gamma  $},\\
\gamma + 2 m&    \text{if  $ \varepsilon = \gamma + m $, with $m \in \omega $},\\
\gamma + \alpha '' + 2 m &    \text{if  $ \varepsilon = \gamma + \alpha'' +m $, with 
$ \alpha '' $ limit $ < \alpha' $,   $m \in \omega $},\\
\gamma + 2 m+1&    \text{if  $ \varepsilon = \gamma + \alpha' +m $, with $m \in \omega $},\\
\gamma + \alpha '' + 2 m +1&    \text{if  $ \varepsilon = \gamma + \alpha' + \alpha'' +m $, } \text{with 
$ \alpha '' $ limit $ < \alpha' $,  $m \in \omega $},\\
\gamma + \alpha ' + m  &    \text{if  $ \varepsilon = \gamma + \alpha' + \alpha' +m $, with 
 $m < n$}.\\
\end{cases}
\] 
It is easy to see that $f$ is injective. 

Suppose that 
$K \subseteq  \gamma  + \alpha = \gamma + \alpha ' + n$, and $K$  has order type 
$<  \gamma  + \alpha $.  Then either 
(a)
$K \cap [ \gamma + \alpha' , \gamma +\alpha'+n) $ has order type $ <n$,
or   (b)
$K \cap [ \gamma , \gamma +\alpha') $ has order type $ \alpha ^*< \alpha' $,
or 
(c) 
$K \cap \gamma$ has order type $ \gamma ^*< \gamma$.

If (a) holds, then $f ^{-1} ([ \gamma + \alpha' ,\gamma + \alpha ' + n))$
has order type $<n$, hence  $f ^{-1}(K) $ has order type 
$< \gamma + \alpha' + \alpha ' + n = \gamma + \alpha + \alpha $.  In case (b),
$f ^{-1} ([ \gamma ,\gamma + \alpha '))$
has order type $ \leq \alpha ^* + \alpha ^* $,
hence $f ^{-1}(K) $ has order type 
$ \leq \gamma + \alpha ^* + \alpha ^* + n$,
which is strictly smaller than 
  $\gamma + \alpha' + \alpha ' + n$, since 
$ \alpha ^*< \alpha' $.
Finally, we can suppose that we are in case (c), and both (a) and (b)
fail. 
Since $K$ has order type 
$<  \gamma  + \alpha = \gamma + \alpha ' +n$ and
$K \cap \gamma$ has order type $ \gamma ^*< \gamma$,
 then $\gamma^* + \alpha < \gamma + \alpha $.
This easily implies that
$\gamma^* + \alpha + \alpha < \gamma + \alpha + \alpha $
(for example, by expressing $\gamma^*$, $\gamma$ and $\alpha$ in Cantor normal form).
Since $f$ is injective and, restricted to $\gamma$, is the identity, then  
$f ^{-1}(K) $ has order type 
$\leq \gamma^* + \alpha + \alpha < \gamma + \alpha + \alpha $.

We have proved that 
$f ^{-1}(K) $ has order type 
$< \gamma + \alpha + \alpha $
in all cases, hence  Proposition \ref{transfer} can be applied. 

(3) 
 If $ \cf \alpha = 1$, this follows from (1), hence let us suppose that 
$ \cf \alpha \geq \omega $.

By Proposition \ref{simple}, it is enough to prove that 
 if $\delta < \lambda \cdot \omega  $, then  $[ \alpha, \alpha ]$-\brfrt compactness implies
$[ \alpha + \delta , \alpha + \delta  ]$-\brfrt compactness. 
Refining further, it is enough to prove that 
\begin{equation} \tag{*}    
\text{if 
$\delta \leq \lambda   $, } 
\\
\text{then  $[ \alpha, \alpha ]$-\brfrt compactness
implies
$[ \alpha + \delta , \alpha + \delta  ]$-\brfrt compactness,}
\end{equation}
 since then
$[ \alpha, \alpha ]$-\brfrt compactness implies
$[ \alpha + \lambda  , \alpha + \lambda   ]$-\brfrt compactness, and then we can proceed inductively, by applying the result with $\alpha + \lambda   $ in place of 
$\alpha$, and then  with $\alpha + \lambda  + \lambda $ in place of 
$\alpha$, and so on.

Hence, suppose that $\delta \leq \lambda =\cf \alpha   $, and that  $[ \alpha, \alpha ]$-\brfrt compactness holds. If $\delta + \alpha > \alpha $, then necessarily 
$\delta= \cf \alpha $ and $ \alpha = ( \cf \alpha) \cdot m$, for some
$ m < \omega$, and (*) follows from (2) with $\gamma=0$. 
Otherwise,  $\delta + \alpha = \alpha $, hence we can define the following 
injective function
$f: \alpha + \delta \to \alpha $.
\[ 
f( \varepsilon )=\begin{cases}
\delta + \varepsilon  &    \text{if  $ \varepsilon < \alpha  $},\\
\eta &    \text{if  $ \varepsilon = \alpha + \eta $, for $\eta < \delta $}.\\
\end{cases}
\] 
Now, if $K \subseteq \alpha$  has order type $ \zeta < \alpha$,
then $f ^{-1} (K)$   has order type $\leq  \zeta + \delta $, 
which is necessarily
 $< \alpha + \delta $,
since  $\delta  \leq \cf \alpha $.
Hence Proposition \ref{transfer} can be applied in order to get (*).

(4) Again by Proposition \ref{simple},
it is enough to prove that 
$[ \lambda , \lambda ]$-\brfrt compactness implies
$[ \alpha ,  \alpha  ]$-\brfrt compactness,
for every $\alpha$ with $| \alpha |= \lambda $. 
This is accomplished by Proposition \ref{transfer},
letting $f$ be any injection from $\alpha$ to $\lambda$.

(5) As above, it is sufficient to prove
that 
$[ \alpha  + \lambda , \alpha  +\lambda ]$-\brfrt compactness implies
$[ \alpha + \gamma , \alpha + \gamma  ]$-\brfrt compactness,
for every $ \gamma $ with $| \gamma |= \lambda $. 
Let $g$ be any injection from $ \gamma $ to $\lambda$, and apply Proposition \ref{transfer} to the following function
$f:  \alpha + \gamma  \to \alpha  + \lambda $.
\[ 
f( \varepsilon )=\begin{cases}
 \varepsilon &    \text{if  $ \varepsilon < \alpha  $},\\
\alpha + g( \eta )  &    \text{if  $ \varepsilon = \alpha + \eta $, with $\eta < \gamma $}.\\
\end{cases}
\] 

If $K \subseteq \alpha + \lambda $ has order type
 $ <\alpha + \lambda $, then either
$K \cap \alpha $ has order type $  < \alpha $,
or 
$K \cap [ \alpha , \alpha +\lambda) $ has order type $ < \lambda $.
In the latter case, 
and since $\lambda$ is a cardinal, 
we have that $f ^{-1} (K)$ has order type 
$ \leq \alpha + \gamma '$, for some $\gamma'$ 
with $|  \gamma '| < \lambda $,
hence  $f ^{-1} (K)$ has order type 
$ < \alpha + \gamma $, since  $| \gamma |= \lambda $. 

On the other hand, if 
$K \cap \alpha $ has order type $  < \alpha $,
then 
$f ^{-1} (K) \cap \alpha $ has order type $  < \alpha $,
since $f$ is the identity on $\alpha$. The assumptions on $\alpha$,
and $| \gamma |= \lambda $ 
then imply that 
$f ^{-1} (K) $ has order type $  < \alpha + \gamma $.
 
(6) Apply the last statement in 
Proposition \ref{transfer} to the function
 $f: \beta + \alpha \to \alpha  $ defined by 
\[ 
f( \varepsilon )=\begin{cases}
0&    \text{if  $ \varepsilon < \beta  $},\\
\eta  &    \text{if  $ \varepsilon = \beta + \eta $, with $\eta< \alpha $}.\\
\end{cases}
\] 

(7) Apply the last statement in 
Proposition \ref{transfer} to the function
 $f: \beta \cdot \alpha \to \alpha  $ defined by 
$f( \varepsilon )= \zeta $
if  $ \varepsilon = \beta \cdot \zeta + \eta $,
for some $\eta < \beta $.

(8) Let $(\gamma_ \eta) _{ \eta \in \nu} $
be a sequence cofinal in $\alpha$ of order type $\nu$.
Define $f: \alpha \to \nu$ by
$f( \varepsilon )= \inf \{  \eta \in \nu \mid \varepsilon < \gamma _ \eta \} $,
and apply  Proposition \ref{transfer}.
\end{proof}

\begin{example} \labbel{unions}
As suggested by Corollary \ref{cortransfer} (6)-(8), 
the relationships between various ordinal compactness properties
change according to whether  $\tau$ is required or not to be
 closed under unions.
For example, if $ \lambda > \mu$ are infinite 
cardinals, then every $[\mu, \mu]$-\brfrt compact topological space is $[\lambda +\mu,  \lambda +\mu]$-\brfrt compact, by Corollary \ref{cortransfer}(6).
On the other hand, if $X= (\lambda +\mu, \tau )$, where
$ \tau = \{ [0, \beta ) \mid \beta \in \lambda \} 
\cup \{ [ \lambda , \lambda + \gamma  ) \mid \gamma  \in \mu \}$,
 then $X$ is trivially $[\mu, \mu]$-\brfrt compact 
(since it has no cover of cardinality $\mu$), but it is not
$[ \lambda +\mu,  \lambda +\mu]$-\brfrt compact.
This is an example of a more general fact: see Corollary \ref{coriff1}. See
also Example 
\ref{exkk}.
 \end{example}

We shall see in Sections \ref{examples} and 
\ref{exact} 
  that
$[ \beta  , \alpha  ]$-\brfrt compactness
is very far from being a trivial notion. However,
Corollary \ref{cortransfer}(4) implies that
 $[ \beta  , \alpha  ]$-\brfrt compactness becomes partly trivial
for intervals containing a cardinal.

\begin{corollary} \labbel{initial}
If $\alpha$ is infinite, and
 $ \beta \leq |\alpha|$, then the following properties are equivalent.
  \begin{enumerate}  
  \item
 $[ \beta  , |\alpha|  ]$-\brfrt compactness.
   \item
 $[ \beta  , |\alpha|^+  )$-\brfrt compactness.
   \item
 $[ \beta  , \alpha ]$-\brfrt compactness.
  \end{enumerate}
In particular, if $\mu \leq \lambda $ 
are infinite cardinals, then 
 $[ \mu  , \lambda  ]$-\brfrt compactness
is equivalent to 
$[ \mu  , \lambda ^+ )$-\brfrt compactness.
 \end{corollary}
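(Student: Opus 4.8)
The plan is to prove the three conditions equivalent by a short cyclic chain of implications, exploiting the fact that, once we set $\lambda = |\alpha|$, the ordinal $\alpha$ is squeezed between $\lambda$ and $\lambda^+$. Precisely, since $\alpha$ is infinite, $\lambda = |\alpha|$ is an infinite cardinal; because $\lambda$ (as an initial ordinal) is the least ordinal of its cardinality, we have $\lambda \leq \alpha$, and because $|\alpha| = \lambda < \lambda^+$ every ordinal of cardinality $\lambda$ lies below $\lambda^+$, so $\alpha < \lambda^+ = |\alpha|^+$. These two bounds $\lambda \leq \alpha < \lambda^+$ are the only arithmetic facts I will need.

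I would then close the cycle (1) $\Rightarrow$ (2) $\Rightarrow$ (3) $\Rightarrow$ (1). The implication (1) $\Rightarrow$ (2) is immediate from Corollary \ref{cortransfer}(4): since $\beta \leq \lambda = |\alpha|$, that corollary says $[\beta, \lambda]$-\brfrt compactness implies $[\beta, \lambda^+)$-\brfrt compactness, which is exactly (2). This single step carries the entire substance of the corollary; everything else is bookkeeping against the squeeze above.

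For (2) $\Rightarrow$ (3) I unfold the definition of $[\beta, |\alpha|^+)$-\brfrt compactness, namely $[\beta, \alpha']$-\brfrt compactness for every $\alpha' < |\alpha|^+ = \lambda^+$. Since $\alpha < \lambda^+$ by the bracketing, instantiating $\alpha' = \alpha$ yields (3) at once. For (3) $\Rightarrow$ (1) I invoke the monotonicity of Proposition \ref{simple}(1): because $\lambda = |\alpha| \leq \alpha$, condition (3), i.e. $[\beta, \alpha]$-\brfrt compactness, implies $[\beta, \lambda]$-\brfrt compactness, which is (1). This completes the cycle and hence the equivalence of (1)--(3).

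Finally, the displayed ``in particular'' clause is the special case $\alpha = \lambda$: when $\lambda$ is an infinite cardinal it is its own initial ordinal, so $|\alpha| = \lambda$ and the hypothesis $\beta \leq |\alpha|$ becomes $\mu \leq \lambda$; the equivalence of (1) and (2) then reads precisely as $[\mu, \lambda]$-\brfrt compactness $\Leftrightarrow$ $[\mu, \lambda^+)$-\brfrt compactness. I do not expect a genuine obstacle here: the only point requiring care is the cardinal-arithmetic squeeze $\lambda \leq \alpha < \lambda^+$, after which each of the three implications is a one-line appeal to an already established result.
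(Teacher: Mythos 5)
Your proposal is correct and follows essentially the same route as the paper: the cycle (1) $\Rightarrow$ (2) via Corollary \ref{cortransfer}(4), then (2) $\Rightarrow$ (3) and (3) $\Rightarrow$ (1) by the definition of $[\beta,\cdot)$-compactness together with the monotonicity of Proposition \ref{simple}(1), using the squeeze $|\alpha| \leq \alpha < |\alpha|^+$. The paper's own proof is just a terser version of exactly this argument.
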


 \begin{proof} 
(1) $\Rightarrow $  (2) is from Corollary \ref{cortransfer}(4). 

(2) $\Rightarrow $  (3) and (3) $\Rightarrow $  (1) are immediate from Proposition 
\ref{simple}(1). 
\end{proof}  

In particular,
``initial $\alpha$-compactness'', that is, $[ \omega , \alpha  ]$-\brfrt compactness,
does become trivial, in the sense that  it actually reduces to cardinal compactness,
in fact, to  $[ \omega   , |\alpha|  ]$-\brfrt compactness.

The next Lemma gives a somewhat
useful  equivalent formulation
of
$[ \beta, \alpha ]$-\brfrt compactness.
It states that
it is enough to take into account
only covers  which are made of  ``irredundant'' elements.

\begin{lemma} \labbel{irredundant}
Let $X$ be a nonempty set,
 $\tau$ be a nonempty family of subsets of $X$, and
$\beta$, $\alpha$ be nonzero ordinals.

Then  
$(X, \tau)$ is $[ \beta, \alpha ]$-\brfrt compact if and only if
the following condition holds.

Whenever $\alpha^*\leq \alpha $, and  $(O_ \delta ) _{ \delta \in \alpha ^*} $ is a sequence of 
members of $\tau$ such that 
\begin{enumerate}
\item
$ \bigcup  _{ \delta \in \alpha^* } O_ \delta = X $, and
\item
for every $\delta <  \alpha ^*$, 
$O_ \delta $ is not contained in 
 $ \bigcup  _{ \varepsilon < \delta  } O_ \varepsilon  $,
 \end{enumerate}   
then there is $H \subseteq  \alpha^* $ with
order type $<\beta$ and such that
$ \bigcup  _{ \delta \in H } O_ \delta = X $.
 \end{lemma}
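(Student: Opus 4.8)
The plan is to prove both directions of the biconditional. The forward direction (left to right) is trivial: if $(X,\tau)$ is $[\beta,\alpha]$-compact and we are handed a sequence $(O_\delta)_{\delta\in\alpha^*}$ with $\alpha^*\le\alpha$ satisfying the irredundancy conditions (1) and (2), then in particular this sequence is a cover indexed by an ordinal $\alpha^*\le\alpha$. By Proposition \ref{simple}(1), $[\beta,\alpha]$-compactness implies $[\beta,\alpha^*]$-compactness, so we immediately extract a subcover indexed by an $H\subseteq\alpha^*$ of order type $<\beta$. Thus the real content is the converse, which I would set up as a contrapositive-free direct argument.

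\textbf{The converse: reducing an arbitrary cover to an irredundant one.}
Assume the stated condition holds, and let $(O_\delta)_{\delta\in\alpha}$ be an arbitrary cover of $X$. The key step is to \emph{thin out} this cover to an irredundant one by deleting, in increasing order of index, every set that is already covered by its predecessors. Formally, I would define $I=\{\delta\in\alpha \mid O_\delta\not\subseteq\bigcup_{\varepsilon<\delta}O_\varepsilon\}$, the set of ``essential'' indices, and let $\alpha^*$ be the order type of $I$ with $\pi\colon\alpha^*\to I$ the order isomorphism. Setting $O'_\xi=O_{\pi(\xi)}$ for $\xi<\alpha^*$ gives a reindexed subfamily. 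I must verify that $(O'_\xi)_{\xi\in\alpha^*}$ satisfies conditions (1) and (2). For (2), irredundancy is essentially built into the definition of $I$, though one must check that deleting non-essential sets does not disturb it: if $O_{\pi(\xi)}\subseteq\bigcup_{\varepsilon<\pi(\xi)}O_\varepsilon$ would follow from $O_{\pi(\xi)}\subseteq\bigcup_{\eta<\xi}O_{\pi(\eta)}$, one needs that the deleted sets contribute nothing beyond the retained ones below them. For (1), I must confirm the thinned family still covers $X$.

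\textbf{The covering claim, which is the main obstacle.}
The delicate point — and the step I expect to require the most care — is showing that $\bigcup_{\xi\in\alpha^*}O'_\xi=X$, i.e.\ that no point is lost in the thinning. The natural argument is: for any $x\in X$, since the original family covers $X$, the set of $\delta$ with $x\in O_\delta$ is nonempty; let $\delta_0$ be its least element. Then $x\notin\bigcup_{\varepsilon<\delta_0}O_\varepsilon$, so $O_{\delta_0}\not\subseteq\bigcup_{\varepsilon<\delta_0}O_\varepsilon$, whence $\delta_0\in I$ and $x$ is covered by the retained family. This well-foundedness argument is exactly where the ordinal indexing is used, so I would state it carefully. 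Dually, one checks that deleted indices are genuinely redundant relative to the \emph{retained} ones, which follows by transfinite induction: each deleted $O_\delta$ is contained in $\bigcup_{\varepsilon<\delta}O_\varepsilon$, and inductively each of those is contained in the union of retained sets with smaller index. Once (1) and (2) are established, the hypothesis yields $H'\subseteq\alpha^*$ of order type $<\beta$ with $\bigcup_{\xi\in H'}O'_\xi=X$. Finally I set $H=\pi(H')\subseteq I\subseteq\alpha$; since $\pi$ is an order isomorphism onto $I$ with the induced order, $H$ has the same order type $<\beta$ as $H'$, and $\bigcup_{\delta\in H}O_\delta=\bigcup_{\xi\in H'}O'_\xi=X$. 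This exhibits the desired subcover and completes the proof of $[\beta,\alpha]$-compactness.
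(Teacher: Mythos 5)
Your proof is correct and follows essentially the same route as the paper's: the forward direction via Proposition \ref{simple}(1), and the converse by restricting to the set of indices $\delta$ with $O_\delta \not\subseteq \bigcup_{\varepsilon<\delta} O_\varepsilon$, reindexing by the order isomorphism, applying the hypothesis, and pulling the resulting $H$ back. The only difference is that you spell out the least-index (well-foundedness) argument for why the thinned family still covers $X$, and the verification of condition (2), both of which the paper dismisses as clear.
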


\begin{proof} 
The ``only if'' part follows trivially from Proposition \ref{simple}(1). 

Conversely, suppose that  $(O_ \delta ) _{ \delta \in \alpha } $
is a cover of $X$.
Let $K=
 \{ \delta \in \alpha \mid O_ \delta  \text{ is not } 
\brfr \text{contained in }
 \bigcup  _{ \varepsilon < \delta  } O_ \varepsilon  \} $.
Clearly,  
$(O_ \delta ) _{ \delta \in K } $
is still a cover of $X$.
Let $\alpha^*$ be the order type of $K$,
and let $f : \alpha ^* \to K$ be the order preserving
bijection.
Applying the assumption to the sequence
$(O _{ f( \gamma )}) _{ \gamma \in \alpha ^*} $, we get
$H \subseteq  \alpha^* $ with
order type $<\beta$, such that
$ \bigcup  _{ \gamma  \in H } O_ {f( \gamma )} = X $.
 This means that
$(O_ \delta ) _{ \delta \in f(H) } $
is  a cover of $X$
indexed by a set of order type $<\beta$. In particular,
it is a subcover of  $(O_ \delta ) _{ \delta \in \alpha } $
thus $[ \beta, \alpha ]$-\brfrt compactness is proved.
\end{proof}

\section{First examples} \labbel{examples} 

In this section we provide many examples showing that
ordinal compactness is not a ``trivial'' notion. In particular, 
it cannot be reduced to cardinal compactness. We also show 
that many of the results proved in Corollary \ref{cortransfer}  are the best possible ones, 
in the general case. On the contrary, 
we shall show in Section 
\ref{t1sec} that certain results can be improved
if we just assume that we are dealing with a $T_1$ topological space. 

In subsection \ref{basicex} we endow cardinals with several topologies, and characterize exactly the ordinal compactness properties they share.
Then in  Subsection \ref{disgsec} we give detailed results  about compactness properties
of disjoint unions, and show that taking disjoint unions
is a very flexible way to get more counterexamples.
Examples of a different kind shall be presented in Section \ref{exact}.

Finally, in Subsection \ref{rmkonsh} we discuss the technical notion
of a shifted sum of two ordinals, introduced in connection with compactness properties of disjoint unions.

\subsection{Basic examples} \labbel{basicex} 
\begin{definition} \labbel{toponcard}
We shall endow cardinals with several topologies.

As usual, the \emph{discrete} topology $\disc$ (on any set)
is the trivial topology in which every subset is open.

The \emph{initial interval topology} $\iit $ on some cardinal 
$ \lambda $ 
is the topology
whose open sets are the intervals of the form
$[0, \beta )$, with $ \beta  \leq  \lambda     $.

The \emph{order topology} $\ord$  on some cardinal 
$\lambda$ is the more usual topology; a base for this topology is given by
the intervals $( \alpha , \beta )$ ($ \alpha < \beta \leq \lambda $),
and $[0, \beta )$ ($  \beta \leq \lambda $).
 \end{definition}

\begin{examples}\labbel{exex} 
Let $\lambda$ be any cardinal, and  $\kappa$ be an infinite  regular cardinal.
\begin{enumerate}
\item 
 $(\lambda, \disc)$
is $[ \lambda ^+, \infty)$-\brfrt compact, and not 
$[ \alpha, \alpha ]$-\brfrt compact, for every nonzero $\alpha < \lambda ^+$.
\item
 $(\kappa, \iit )$ is not
$[ \kappa , \kappa  ]$-\brfrt compact, but it is
$[ \kappa  + 1, \infty)$-\brfrt compact, and 
$[2, \kappa )$-\brfrt compact.
\item 
If $\kappa> \omega $, then  
$(\kappa, \ord)$  
is a normal topological space which is $[ \kappa +\omega , \infty  )$-\brfrt compact, $[ \omega ,  \kappa  )$-\brfrt compact, and not $ [\kappa +n, \kappa +n] $-\brfrt compact, for each $ n \in \omega$.
\end{enumerate}    
\end{examples}

\begin{proof}
(1) is trivial.

(2) The sequence $[0, \beta ) _{\beta  < \kappa   } $
itself proves $[ \kappa , \kappa  ]$-incompactness,
since $\kappa$ is an infinite regular cardinal.

On the other hand, let $(O_ \delta ) _{ \delta \in \alpha}$ 
be a cover of  $(\kappa, \iit )$. If $O_ \delta = \kappa $, for some $ \delta \in \alpha$,
then clearly $\{ O_ \delta \}$ itself is a one-element subcover.

Suppose otherwise. Since $\kappa$ is regular, then necessarily
$\alpha  \geq \kappa $, and our aim is to extract a subcover of order type $\leq \kappa$. In fact, the subcover will turn out to be of order type  exactly $\kappa$.

By Lemma \ref{irredundant}, the result follows from the particular case
in which the cover $(O_ \delta ) _{ \delta \in \alpha}$ has the additional property that,
for every $\delta <  \alpha $, 
$O_ \delta $ is not contained in 
 $ \bigcup  _{ \varepsilon < \delta  } O_ \varepsilon  $. Suppose that the above condition is satisfied. Since each
$O_ \delta $ has the form $[0, \beta_ \delta )$, for some $\beta_ \delta < \kappa $,
then, by the above condition, $\beta_ \delta  < \beta _{ \delta' } $, 
for all pairs $\delta < \delta' < \alpha $.    

Since $(O_ \delta ) _{ \delta \in \alpha}$ 
is a cover of $ \kappa $, then 
$\sup _{\delta <  \alpha }  \beta_ \delta = \kappa $.
Thus, the sequence $ (\beta _ \delta ) _{ \delta < \alpha } $ 
is strictly increasing, and cofinal in $\kappa$, hence has order type
$\kappa$, since $\kappa$ is a regular cardinal.

(3) Let $(O_ \delta ) _{ \delta \in \alpha}$ 
be a cover of $(\kappa, \ord)$.

First, consider the case when some 
$O_ {\bar{\delta}} $ contains an interval of the form 
$(\varepsilon, \kappa )$,
for some $\varepsilon < \kappa $. 
Since
$[0, \varepsilon ]$ is compact, it is covered by
a finite number of the $O_ \delta $'s. If we add
$O_ {\bar{\delta}} $ to these, we get a finite subcover of 
$\kappa$, since
$ \kappa  = [0, \varepsilon ] \cup (\varepsilon, \kappa )$,
hence the conclusion holds in this case.

So we can suppose that
no $O_ \delta $ contains an interval of the form 
$(\varepsilon, \kappa )$, thus necessarily
$\alpha \geq \kappa $,
since $\kappa$ is regular. 
Since $(O_ \delta ) _{ \delta \in \alpha}$ 
is a cover, and each  $O_ \delta $
is a union of intervals, we have that, for every $\beta \in \kappa $,
with $\beta \not= 0$, there is an interval  
$I_ \beta =( \varepsilon _ \beta , \phi _ \beta )$, with
$ \varepsilon _ \beta < \phi _ \beta  < \kappa $, 
such that  $\beta \in I_ \beta $,
and  $I_\beta \subseteq O _{ \delta ( \beta )} $,
for some $\delta( \beta ) \in \alpha $.  
For every nonzero
$\beta \in \kappa $,
choose some $I_ \beta $
and some $\delta( \beta ) \in \alpha $ as above.  
The function
 $f : \kappa \setminus \{ 0\} \to \kappa  $ defined
by $f( \beta  )= \varepsilon _ \beta  $ is regressive, hence constant on a 
set $S$ stationary in $\kappa$,
say,  $f( \beta  )= \bar{ \varepsilon }  $, for $ \beta  \in S$.

Let $D = \{ \delta \in \alpha  \mid   \delta = \delta ( \beta ), 
\text{ for some } \beta \in S\}  $.
For $ \delta \in D$, let 
$\eta_ \delta = \sup \{  \eta < \kappa \mid O_ \delta \supseteq (\bar{ \varepsilon }, \eta) \} $,
and let $J_ \delta = (\bar{ \varepsilon }, \eta _ \delta )$. 
  Thus, 
$O_ \delta \supseteq J_ \delta $, for $ \delta \in D$. 
Moreover, 
$ I_\beta \subseteq J _{ \delta ( \beta )}$, for $\beta \in S$,
since, if $\beta \in S$, then
$( \bar{ \varepsilon } , \phi _ \beta ) =I_ \beta \subseteq O _{ \delta ( \beta )} $.
We now show  that $(J _ \delta ) _{ \delta \in D} $ 
is a cover of 
$(\bar{ \varepsilon }, \kappa )$. Indeed, 
since $S$ is stationary, in particular, cofinal, 
then, for every $\beta'$ with
$\bar{ \varepsilon } < \beta ' < \kappa $,
there is $\beta > \beta '$,
such that $\beta \in S$, thus 
$\beta' \in I _ \beta $, since
$\bar{ \varepsilon } < \beta ' < \beta \in I_ \beta =( \varepsilon _ \beta , \phi _ \beta ) $,
hence 
$\beta' \in J _{ \delta ( \beta )} \supseteq I_\beta $.
 
Since 
$(\bar{ \varepsilon }, \kappa )$ is
order-isomorphic to $\kappa$, and, through
this isomorphism, the $J_ \delta $'s 
correspond to open sets in the $\iit$ topology,
we can apply (2) 
in order to  get a subset $E \subseteq D \subseteq \alpha $ 
such that $E$ has order type $ \leq \kappa $, 
and 
$(J _ \delta ) _{ \delta \in E} $
 covers
$(\bar{ \varepsilon }, \kappa )$.
Hence also 
$(O _ \delta ) _{ \delta \in E} $
 covers
$(\bar{ \varepsilon }, \kappa )$.

Since $ \kappa = [0, \bar{ \varepsilon }] \cup (\bar{ \varepsilon }, \kappa )$,
and  $[0, \bar{ \varepsilon }]$ is compact, it is enough to add
to $E$ a finite number of elements from 
the original sequence $(O_ \delta ) _{ \delta \in \alpha}$,
in order to get a cover of the whole $\kappa$.
Since we have added a finite number of elements
to a sequence of order type $ \leq\kappa$, we get a 
cover of $\kappa$ which has order type 
$< \kappa + \omega $, and which is
 a subsequence of the original sequence. Thus, we have proved 
$[\kappa + \omega, \alpha ]$-compactness. 

In order to finish the proof, we have to show that,
for each $ n \in \omega$,  $(\kappa, \ord)$  
is   not $ [\kappa +n, \kappa +n] $-\brfrt compact.
An easy counterexample is given by the sequence
$(O_ \delta ) _{ \delta \in \kappa +n } $ 
defined by 
\[ 
O_ \delta  =\begin{cases}
[n, n+\delta )&    \text{if  $ \delta < \kappa   $}, \\
\{ m \}  &    \text{if  $ \delta = \kappa +m $, with $m <n$}  \qedhere
\end{cases} 
\] 
\end{proof}

The situation appears in a clearer light if we introduce
an ordinal variant of the  Lindel\"of number of a space.

\begin{definition} \labbel{lindel}
The  \emph{Lindel\"of ordinal} of $(X, \tau )$
is the smallest ordinal $\alpha$ such that  
$(X, \tau )$ is $ [ \alpha , \infty) $-\brfrt compact.

Compare the above definition with the classical notion
of the \emph{Lindel\"of number} of a topological space $X$,
which is the 
smallest \emph{cardinal} $ \mu $ such that  
$X$ is $ [ \mu ^+, \infty) $-\brfrt compact
(the \emph{Lindel\"of number} is a distinct notion from the \emph{Lindel\"of\/$^+$ cardinal}
defined in the introduction.)

 \end{definition}   

Thus, the 
Lindel\"of number $\mu$ of $X$ is determined by its
Lindel\"of ordinal $\alpha$. Indeed, $\mu $
is the predecessor of 
$ \alpha $,
if
$\alpha$ is a successor cardinal, and
$\mu= | \alpha |$ otherwise.
On the other hand, in general, the 
 Lindel\"of ordinal cannot be determined by
the Lindel\"of numbers, as shown by Example \ref{exex}.
Indeed, taking $\lambda= \kappa $ regular and uncountable,  all the spaces in Examples \ref{exex} 
have  Lindel\"of number equal to $\kappa$, however,
their Lindel\"of ordinals are, respectively, $\kappa^+$,
$\kappa+1$, and $\kappa+ \omega $.    
Other possibilities for the Lindel\"of ordinal
are presented in Examples
\ref{exgen}, \ref{samecard} and \ref{condiscr}.
On the other hand, restrictions on the possible
values Lindel\"of ordinals can assume are given
in Corollary \ref{lindw} for spaces of small cardinality, 
and in Corollary \ref{lindt1} for $T_1$ spaces.

\begin{remark} \labbel{w}   
Examples \ref{exex} also show that ordinal compactness cannot be
determined exclusively by the cardinal compactness properties 
enjoyed by some space.
 For example, $X_1 = (\omega, \iit)$
is $[ \omega  + 1, \infty)$-\brfrt compact, hence, by Proposition \ref{simple}(1), it
is $[ \alpha , \alpha ]$-\brfrt compact, for every ordinal $\alpha > \omega $.
On the other hand, $ X_2=(\omega, \disc)$ 
is $[ \omega _1, \infty)$-\brfrt compact, but not 
$[ \alpha , \alpha ]$-\brfrt compact, for every countable ordinal  $\alpha$.
Thus, $X_1$ and $X_2$ are $[ \lambda , \mu]$-\brfrt compact exactly for the
same pairs of infinite cardinals $\lambda$ and $\mu$,
but there are many ordinals $\alpha$ for which  
$X_1$ is $[ \alpha , \alpha ]$-\brfrt compact, but $X_2$ is not.

Example \ref{samecard} below furnishes two normal topological spaces which are $[ \lambda , \mu]$-\brfrt compact exactly for the
same pairs of cardinals $\lambda$ and $\mu$, no matter whether finite or infinite,
but not $[ \alpha  , \alpha ]$-\brfrt compact for the same ordinals.
 \end{remark}

\subsection{Disjoint unions} \labbel{disgsec} 
In order to refine Examples \ref{exex}, we need some definitions.

\begin{definition} \labbel{disgdef}   
If $X_1$ and $X_2$ are sets, with $\tau_1$, $\tau_2$
respective families of subsets,
the \emph{disjoint union}  $  (X_1 \cupdot X_2, \tau) $  of  
$(X_1, \tau _1)$ and $(X_2, \tau_2)$
is a set $  X_1 \cupdot X_2$ obtained by taking the union
of disjointed copies of $X_1$ and $X_2$, 
with $\tau$  being the family of all subsets of $ X_1 \cupdot X_2$ 
which either belong to (the copy of) $\tau_1$, or belong to $\tau_2$,
or are the union of a set in $\tau_1$ and a set in $\tau_2$.
Of course, in the case when $X_1$ and $X_2$  are topological spaces,
we get back the usual notion of disjoint union in the topological sense.
\end{definition} 

\begin{definition} \labbel{defshift}  
If $\alpha$ and $\beta$ are ordinals, we say that some ordinal 
$\gamma$ is a \emph{shifted sum} of $\alpha$ and $\beta$ if and only if 
$\gamma = I \cup J$, for  some (not necessarily disjoint) subsets 
$I , J \subseteq \gamma $
such that
 $I $ has order type $\alpha$ and 
 $I $ has order type $ \beta $.

Trivially, both $\alpha + \beta $ and $\beta+ \alpha $
are   shifted sums of $\alpha$ and $\beta$.
The (Hessenberg) natural sum $\alpha \oplus \beta $ 
is the largest possible shifted sum of $\alpha$ and $\beta$.
This is immediate from \cite[Theorem 1, I, II]{Car}, 
where the Hessenberg natural sum is denoted by 
$ \sigma(\alpha , \beta) $, and follows also from Proposition \ref{chsh} below.

However, there are other possibilities for shifted sums.
For example, $ \omega_1 + \omega $ is a   
  shifted sum of $ \omega_1$ and $ \omega+ \omega $.
A quite involved formula for determining all the possible 
shifted sums of $\alpha$ and $\beta$ shall be obtained 
in Proposition \ref{chsh}, by expressing ordinals in additive normal form.
The complication arises from the fact that, say,
though both
$ \omega^3 + \omega + 1$ 
and 
$ \omega^3 + \omega^2 +1$ 
are shifted sums of
$ \alpha = \omega^3 + \omega $ and
$ \beta =  \omega^2 + 1 $, on the contrary
$ \omega^3 + 1$
is not a shifted sum 
of $\alpha$ and $\beta$.
 
 If $ \alpha  $ and $  \beta$ are ordinals, we denote by
 $ \alpha  +^* \beta  $ the smallest ordinal $ \delta $ 
 larger than all the shifted sums of
$\alpha'$ and $\beta'$, for 
$\alpha' < \alpha $ and
$\beta' < \beta $.
Alternatively,
 $ \alpha  +^* \beta $ can be defined as 
$ \sup _{\alpha ' < \alpha ,  \beta ' < \beta   }    \alpha' \oplus \beta' +1    $.
 \end{definition}

We shall also need the following lemma.

\begin{lemma} \labbel{lemshifted}
Suppose that $\gamma $ is a shifted sum of $\alpha$ and $\beta$, 
that is, $\gamma = I\cup J$, with 
 $I$ having order type $\alpha$ and
 $J $ having order type $ \beta $.

Then the following additional property
is satisfied.
Whenever  
 $I^* \subseteq I$ has still order type $\alpha$, and 
 $J^* \subseteq J $  has still order type $ \beta $, 
then 
$I^* \cup J^*$ has still order type $\gamma$.
 \end{lemma}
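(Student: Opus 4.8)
The plan is to reduce the claim to a lower bound and then exploit the Cantor normal form of $\gamma$. Since $I^*\cup J^*\subseteq I\cup J=\gamma$, the order type of $I^*\cup J^*$ is automatically $\le\gamma$, so it suffices to prove $\mathrm{ot}(I^*\cup J^*)\ge\gamma$. I note at the outset that the naive idea — transporting $I$ onto $I^*$ and $J$ onto $J^*$ by the respective order isomorphisms and gluing the two maps — does \emph{not} work, because these isomorphisms can cross each other (an element of $I$ may be pushed up past an element of $J$ lying above it). So I would argue instead blockwise along the additive decomposition of $\gamma$.

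The key preliminary is a \emph{tail--preservation} fact: if $I^*\subseteq I\subseteq\gamma$ with $\mathrm{ot}(I^*)=\mathrm{ot}(I)$, then $\mathrm{ot}(I^*\cap[\eta,\gamma))=\mathrm{ot}(I\cap[\eta,\gamma))$ for every $\eta\le\gamma$ (and symmetrically for $J$). This follows by writing $\mathrm{ot}(I)=\mathrm{ot}(I\cap\eta)+\mathrm{ot}(I\cap[\eta,\gamma))$ and likewise for $I^*$: here $\mathrm{ot}(I^*\cap\eta)\le\mathrm{ot}(I\cap\eta)$ and $\mathrm{ot}(I^*\cap[\eta,\gamma))\le\mathrm{ot}(I\cap[\eta,\gamma))$, while the two full sums agree. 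If the tail piece were strictly smaller, then, since ordinal addition is monotone in the left argument and strictly monotone in the right, one would get $\mathrm{ot}(I^*)=\mathrm{ot}(I^*\cap\eta)+\mathrm{ot}(I^*\cap[\eta,\gamma))<\mathrm{ot}(I\cap\eta)+\mathrm{ot}(I\cap[\eta,\gamma))=\mathrm{ot}(I)$, a contradiction. Thus $I^*$ may be deficient on initial segments but is exactly as large as $I$ on every final segment, which is precisely the control one needs at the top of $\gamma$.

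Now write $\gamma=\omega^{\delta_1}+\cdots+\omega^{\delta_k}$ in Cantor normal form, set $\sigma_j=\omega^{\delta_1}+\cdots+\omega^{\delta_j}$, and let $C_j=[\sigma_{j-1},\sigma_j)$ be the corresponding consecutive intervals, so that $\mathrm{ot}(C_j)=\omega^{\delta_j}$ is additively indecomposable and $\mathrm{ot}(S)=\sum_j\mathrm{ot}(S\cap C_j)$ for every $S\subseteq\gamma$. For each block, $(I\cap C_j)\cup(J\cap C_j)=C_j$, so $\omega^{\delta_j}$ is a shifted sum of $\mathrm{ot}(I\cap C_j)$ and $\mathrm{ot}(J\cap C_j)$; since $\omega^{\delta_j}$ is closed under the Hessenberg sum and the natural sum is the largest shifted sum, at least one of these order types equals $\omega^{\delta_j}$ — say $I$ fills $C_j$. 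I would then transfer this to the starred sets: applying the tail fact at $\sigma_{j-1}$ and at $\sigma_j$ and cancelling the common tail $T:=\mathrm{ot}(I^*\cap[\sigma_j,\gamma))=\mathrm{ot}(I\cap[\sigma_j,\gamma))$ gives $\mathrm{ot}(I^*\cap C_j)+T=\omega^{\delta_j}+T$, with $\mathrm{leadexp}(T)\le\delta_j$. An absorption computation then forces $\mathrm{ot}(I^*\cap C_j)=\omega^{\delta_j}$, so each block is already filled by $K^*:=I^*\cup J^*$; summing, $\mathrm{ot}(K^*)=\sum_j\mathrm{ot}(K^*\cap C_j)=\sum_j\omega^{\delta_j}=\gamma$.

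The main obstacle is that final absorption step. One must rule out $p:=\mathrm{ot}(I^*\cap C_j)<\omega^{\delta_j}$ under $p+T=\omega^{\delta_j}+T$. This is exactly where additive indecomposability is essential: any $p<\omega^{\delta_j}$ has all normal-form exponents $<\delta_j$, hence is absorbed either by the leading $\omega^{\delta_j}$-term of $T$ (when $\delta_{j+1}=\delta_j$) or by $\omega^{\delta_j}$ itself (when $\mathrm{leadexp}(T)<\delta_j$, including $T=0$), and in both cases $p+T$ turns out strictly below $\omega^{\delta_j}+T$, a contradiction. Checking these two normal-form cases, together with the two auxiliary facts invoked without reproof — additivity of order type over consecutive intervals, and the bound of every shifted sum by the natural sum — is the only genuinely technical part; everything else is formal.
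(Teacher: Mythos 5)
Your proposal is correct and is essentially the same proof as the paper's: both split $\gamma$ into the consecutive blocks of additively indecomposable order type given by its normal form, show that $I$ or $J$ meets each block in full order type (you derive this from the bound of shifted sums by the natural sum, where the paper cites L\"auchli's Hilfssatz), and transfer that fullness to $I^*$ and $J^*$ by the same absorption computation, namely that $p+T<\omega^{\delta}+T$ whenever $p<\omega^{\delta}$ and $T<\omega^{\delta+1}$, which you package via a tail-preservation lemma while the paper packages it as a three-part estimate showing $\mathrm{ot}(I^*)$ would otherwise fall strictly below $\alpha$. The differences are bookkeeping, not substance.
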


\begin{proof}
Express $ \gamma $ in
additive normal form as
\[
 \gamma =
 \omega ^ {\eta_h}  + \omega ^ {\eta _{h-1}}   +
\dots
+ \omega ^ {\eta_1}  + \omega ^ {\eta_0} , 
\]
  for some  integer 
$h \geq 0$, and ordinals
$ \eta _h \geq \eta _{h-1} \geq \dots \geq \eta_1 \geq \eta_0  $.

Put $\gamma _{h+1}=0 $ and,
 for $i=0, \dots, h$, put
\[ 
\gamma_i= \omega ^ {\eta_h}  + \omega ^ {\eta _{h-1}}   +
\dots
+ \omega ^ {\eta _{i+1} }  + \omega ^ {\eta_i}. 
 \]
Consider the intervals
$K_i= [ \gamma _{i+1} , \gamma _i)$,
 for $i=0, \dots, h$. 
Clearly, 
each
  $K_i$ has order type $\omega ^ {\eta_i}$. Moreover, 
$\gamma$ is the disjoint union of the $K_i$'s. 

Fix some ${\bar \imath}$.
Since $\gamma= I \cup J$, then
$K_{\bar \imath} = (I \cap K_{\bar \imath}) \cup (J \cap K_{\bar \imath})$.
Since 
  $K_{\bar \imath}$ has order type $\omega ^ {\eta_{\bar \imath}}$,
then, by an easy property of such exponents, either 
$I \cap K_{\bar \imath}$ or
$J \cap K_{\bar \imath}$
has order type $\omega ^ {\eta_{\bar \imath}}$
(this is similar to, e. g., Hilfssatz  1 in \cite{lauchli}).
Suppose that, say,
$I_{\bar \imath}=I \cap K_{\bar \imath}$ 
has order type $\omega ^ {\eta_{\bar \imath}}$.
Let 
$I _\bullet=I \cap (K_h \cup \dots \cup K _{\bar \imath +1}) $,
and 
$I_\starr=I \cap (K _{\bar \imath-1}  \cup \dots \cup K _{0}) $,
and let $ \alpha _\bullet$, $ \alpha _\starr$ be their respective order types.
Since $\gamma$ is the union of the 
$K_{i}$'s, then
$I= I_\bullet\cup I_{\bar \imath} \cup I_\starr$.  
 Because of the relative way the elements of 
the  $K_{i}$'s are ordered in $\gamma$, we have that
$\alpha= \alpha _\bullet+ \omega ^ {\eta_{\bar \imath}}+ \alpha _\starr$. 
Notice that $\alpha _\starr < \omega ^ {\eta_{\bar  \imath}+1}$,
since the order type of $K _{i-1}  \cup \dots \cup  K _{0}$ is
 $ \omega ^ {\eta _{i-1}}   +
\dots
 + \omega ^ {\eta_0}< \omega ^ {\eta_{\bar \imath}} \cdot \omega = \omega ^ {\eta_{\bar \imath}+1}$.
Since $I^* \subseteq I$, 
then the order types of, respectively, 
$I^* \cap  I_\bullet $, $ I^* \cap  I_{\bar \imath} $, and $ I^* \cap I_\starr$
are $\leq$ than, respectively,
 $\alpha _\bullet $, $  \omega ^ {\eta_{\bar \imath}} $,
and $  \alpha _\starr$. However, since
both $I^* $ and $ I$ have order type $\alpha$, 
then necessarily
 $I^* \cap I _{\bar \imath}= I^* \cap K_{\bar \imath}$ has order type $\omega ^ {\eta_{\bar \imath}}$, since otherwise the order type of
$I^*$ would be strictly smaller than $\alpha= \alpha _\bullet+ \omega ^ {\eta_{\bar \imath}}+ \alpha _\starr$, since, as we mentioned, 
  $\alpha _\starr < \omega ^ {\eta_{\bar  \imath}+1}$. 

In a similar way, if $J \cap K_{\bar \imath}$
has order type $\omega ^ {\eta_{\bar \imath}}$, then
also $J^* \cap K_{\bar \imath}$
has order type $\omega ^ {\eta_{\bar \imath}}$.
Since the above argument works for each $i$,
we get that, for each $i=0, \dots, h$,
either   $I^* \cap K_i$ or $J^* \cap K_i$
contribute to $K_i$ with order type $\omega ^ {\eta_i}$,
that is, $(I^* \cup J^*) \cap K_i$ has order type $\omega ^ {\eta_i}$.
This, together with the definition of the $K_i$'s, implies that
 $I^* \cup J^*$ has order type $\gamma$.
 \end{proof}

In the next lemma 
we characterize the compactness properties of disjoint unions.
The lemma has not the most general form possible, but it  is quite good
for our purposes. 

\begin{lemma} \labbel{disg}
Assume the notation in Definitions \ref{disgdef} and \ref{defshift}.
\begin{enumerate} 
\item
Suppose that  $X_1$ is not $[ \alpha , \alpha ]$-\brfrt compact, and 
 $X_2$ is not $[ \beta , \beta  ]$-\brfrt compact.

If $\gamma$ is a shifted sum of $\alpha$ and $\beta$,
then $ X_1 \cupdot X_2$
is  
not $[ \gamma  , \gamma  ]$-\brfrt compact.

In particular,  $ X_1 \cupdot X_2$
is neither $[ \alpha + \beta , \alpha + \beta ]$-\brfrt compact,
nor $[ \beta +\alpha , \beta +\alpha ]$-\brfrt compact,
nor $[ \alpha \oplus \beta , \alpha \oplus \beta ]$-\brfrt compact.
\item
If $X_1$ is  $[ \beta _1 , \alpha ]$-\brfrt compact, 
and $X_2$ is $[ \beta _2 , \alpha  ]$-\brfrt compact,
then $ X_1 \cupdot X_2$
is $[ \beta _1 +^* \beta _2  , \alpha  ]$-\brfrt compact. 
\end{enumerate}  
\end{lemma}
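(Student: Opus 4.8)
The plan is to prove the two parts separately, using Lemma \ref{lemshifted} as the engine for (1) and the very definition of $+^*$ as the engine for (2). For (1), fix covers witnessing the incompactness of the summands: a cover $(O_\delta)_{\delta \in \alpha}$ of $X_1$ admitting no subcover of order type $<\alpha$, and a cover $(P_\varepsilon)_{\varepsilon \in \beta}$ of $X_2$ admitting no subcover of order type $<\beta$. Write $\gamma = I \cup J$ with $I$ of order type $\alpha$ and $J$ of order type $\beta$, and let $\phi \colon \alpha \to I$, $\psi \colon \beta \to J$ be the order isomorphisms. I would index a cover of $X_1 \cupdot X_2$ by $\gamma$, placing at position $\xi$ the set $O_{\phi^{-1}(\xi)}$ if $\xi \in I \setminus J$, the set $P_{\psi^{-1}(\xi)}$ if $\xi \in J \setminus I$, and their union if $\xi \in I \cap J$; by Definition \ref{disgdef} each of these belongs to $\tau$, and the whole family covers $X_1 \cupdot X_2$, the positions in $I$ supplying the $X_1$-points and those in $J$ the $X_2$-points. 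Now suppose $H \subseteq \gamma$ of order type $<\gamma$ indexes a subcover. Since the $X_1$-part contributed by position $\xi$ is exactly $O_{\phi^{-1}(\xi)}$ whenever $\xi \in I$ (and empty otherwise), the family $\bigl(O_{\phi^{-1}(\xi)}\bigr)_{\xi \in H \cap I}$ must cover $X_1$, so $\phi^{-1}(H \cap I)$, and hence $H \cap I$, has order type $\alpha$; symmetrically $H \cap J$ has order type $\beta$. Applying Lemma \ref{lemshifted} with $I^* = H \cap I$ and $J^* = H \cap J$ shows that $H = (H \cap I) \cup (H \cap J)$ has order type $\gamma$, a contradiction. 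The ``in particular'' clause is then immediate, since $\alpha + \beta$, $\beta + \alpha$ and $\alpha \oplus \beta$ are all shifted sums of $\alpha$ and $\beta$.

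For (2), I would start from an arbitrary $\alpha$-indexed cover $(Q_\delta)_{\delta \in \alpha}$ of $X_1 \cupdot X_2$ and split it into its traces $A_\delta = Q_\delta \cap X_1$ and $B_\delta = Q_\delta \cap X_2$, which lie in $\tau_1$ and $\tau_2$ respectively whenever nonempty. The family $(A_\delta)$ covers $X_1$; discarding the empty terms and invoking $[\beta_1, \alpha]$-compactness of $X_1$ (through Proposition \ref{simple}(1) and Remark \ref{ordset}, to absorb the possibly smaller well-ordered index set of nonempty traces) yields $H_1 \subseteq \alpha$ of order type $<\beta_1$ with $\bigcup_{\delta \in H_1} A_\delta = X_1$; symmetrically one gets $H_2 \subseteq \alpha$ of order type $<\beta_2$ covering $X_2$. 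Then $H = H_1 \cup H_2$ indexes a subcover of $(Q_\delta)$. The crux is that, viewed inside $\alpha$, $H$ is a shifted sum of the order types of $H_1$ and $H_2$ — certain $\beta_1' < \beta_1$ and $\beta_2' < \beta_2$ — so its order type is at most $\beta_1' \oplus \beta_2'$, hence strictly below $\beta_1 +^* \beta_2$ by the definition of $+^*$ in Definition \ref{defshift}. This establishes $[\beta_1 +^* \beta_2, \alpha]$-compactness.

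The main obstacle is the order-type bookkeeping in (1): it is not at all obvious that two subsets $H \cap I$ and $H \cap J$ of full order types $\alpha$ and $\beta$ sitting inside $\gamma = I \cup J$ must reunite to the full order type $\gamma$, precisely because $I$ and $J$ may overlap and interleave arbitrarily. This is exactly the content that Lemma \ref{lemshifted} isolates (and settles via the additive normal form decomposition), so the entire weight of part (1) rests on being able to quote it. For (2) the only delicate point is the handling of the empty traces $A_\delta, B_\delta$, which need not belong to $\tau_1, \tau_2$; this is a minor technicality dispatched by restricting to the nonempty terms before applying compactness of the summands, and the rest is the clean observation identifying $H_1 \cup H_2$ as a shifted sum bounded by $+^*$.
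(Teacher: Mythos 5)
Your proposal is correct and follows essentially the same route as the paper's own proof: in part (1) you build the $\gamma$-indexed combined cover from the witnesses of incompactness and invoke Lemma \ref{lemshifted} on $H \cap I$ and $H \cap J$ exactly as the paper does, and in part (2) you split an arbitrary $\alpha$-indexed cover into its $X_1$- and $X_2$-parts, apply compactness of the summands, and bound the order type of the union of the two index sets by noting it is a shifted sum of ordinals $\beta_1' < \beta_1$, $\beta_2' < \beta_2$, hence $< \beta_1 +^* \beta_2$. The only cosmetic difference is that you track traces $Q_\delta \cap X_i$ (discarding empty ones) where the paper tracks the index sets $I$, $J$ of covers admitting a $\tau_1$- or $\tau_2$-component; this is the same bookkeeping.
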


\begin{proof} 
(1) Represent  $ \gamma $ as $  I \cup J$ as in the definition 
of a shifted sum, 
with $I $ of order type $\alpha$ and 
 $J$ of order type $ \beta $,
and let 
$f_1: I \to \alpha $ and 
$f_2: J \to \beta  $
be the order preserving bijections.

Let
$(O_ \delta ) _{ \delta \in \alpha } $ be a cover of $X_1$
witnessing
$[ \alpha , \alpha ]$-\brfrt incompactness,
and let 
$(P_ \varepsilon  ) _{ \varepsilon \in \beta  } $ be a cover of $X_2$
witnessing
$[ \beta , \beta ]$-\brfrt incompactness.

For $ \phi \in \gamma $, 
let $Q _ \phi \subseteq X_1 \cupdot X_2$ be defined by 

\[ 
Q _ \phi=\begin{cases}
O _{ \delta } &    \text{if  $ \phi \in I \setminus J$ and $\delta= f_1( \phi)$ },\\
P _{ \varepsilon  } &    \text{if  $ \phi \in J \setminus I$ and $ \varepsilon = f_2( \phi)$ },\\
O _{ \delta } \cup P _{ \varepsilon  } &    \text{if  $ \phi \in I \cup J$, $\delta= f_1( \phi)$, 
and $ \varepsilon = f_2( \phi)$}.\\
\end{cases}
\] 

By the definition of disjoint union, 
$(Q _ \phi ) _{\phi \in \gamma } $ is a cover 
of $ X_1 \cupdot X_2$ with elements in $\tau$.
Suppose that $H \subseteq \gamma $,
and that  $(Q _ \phi ) _{\phi \in H } $ is still a cover 
of $ X_1 \cupdot X_2$. Then it is easy to see that
$(O_ \delta ) _{ \delta \in f_1(H \cap I)} $ is a cover of $X_1$.
Since $(O_ \delta ) _{ \delta \in \alpha } $ witnesses the
$[ \alpha , \alpha ]$-\brfrt incompactness of $X_1$,
then $f_1(H \cap I)$ has order type $\alpha$,
hence also  $I^*=H \cap I$ has order type $\alpha$,
since $f_1$ is an order preserving  bijection. Similarly,
 $J^*=H \cap J$ has order type  $\beta$.

By Lemma \ref{lemshifted}, 
$H= H \cap \gamma = H \cap (I \cup J) = 
(H \cap I) \cup (H \cap J) =I^* \cup J^*$ has order type $\gamma$.  
Thus, $(Q _ \phi ) _{\phi \in \gamma }$ is a counterexample to
the $[ \gamma  , \gamma  ]$-\brfrt compactness
of $ X_1 \cupdot X_2$.

The last statement in (1) follows from  the remarks in Definition \ref{disgdef}.

(2)
Let
$(O_ \delta ) _{ \delta \in \alpha } $ be a cover of 
 $ X_1 \cupdot X_2$.

Let $I$ be the set 
of all $\delta \in \alpha $
such that either $O_ \delta = P_ \delta $,
for some $P_ \delta \in \tau _1$,
or
 $O_ \delta = P_ \delta  \cup Q_ \delta $,
for some (unique pair) $P_ \delta \in \tau _1$
and $Q_ \delta \in \tau _2$.
Similarly, let 
 $J$ be the set 
of all $\delta \in \alpha $
such that either $O_ \delta = Q_ \delta $,
for some $Q_ \delta \in \tau _2$,
or
 $O_ \delta = P_ \delta  \cup Q_ \delta $,
for some $P_ \delta \in \tau _1$
and $Q_ \delta \in \tau _2$.
Notice that
$I \cup J = \alpha $,
because of the definition of disjoint union.

Since $(O_ \delta ) _{ \delta \in \alpha } $ is a cover of 
 $ X_1 \cupdot X_2$, then  $(P_ \delta ) _{ \delta \in I} $ 
is a cover of $X_1$, and, since $I$
has order type $\leq \alpha$,
then, by  Remark \ref{ordset}, 
Proposition \ref{simple}(1), and   
the  $[ \beta _1 , \alpha ]$-\brfrt compactness of 
$X_1$, 
there is 
$I^* \subseteq I$
such that 
$I^* $
has order type $\beta'_1< \beta_1$,
and 
$(P_ \delta ) _{ \delta \in I^*} $ 
is still a cover of $X_1$.
Similarly,
there is 
$J^* \subseteq J$
such that 
$J^* $
has order type $\beta'_2< \beta_2$,
and 
$(Q_ \delta ) _{ \delta \in J^*} $ 
is  a cover of $X_2$.

Let  $\gamma$ 
be the order type of $I^* \cup J^* $.
Then $\gamma$ is a shifted sum of 
$\beta'_1$ and  $\beta'_2$,
thus $\gamma < \beta _1 +^* \beta _2$.
Since 
$(O_ \delta ) _{ \delta \in I^* \cup J^* } $ 
turns out 
to 
be a cover of 
 $ X_1 \cupdot X_2$,
the conclusion follows.
\end{proof} 

\begin{corollary} \labbel{lindofdisg}
Suppose that the Lindel\"of ordinal of $ X_1 $ is $\beta_1$, and that the Lindel\"of ordinal of $  X_2$
is $\beta_2$. Then the Lindel\"of ordinal of $ X_1 \cupdot X_2$
is   $\beta _1 +^* \beta _2 $.
\end{corollary}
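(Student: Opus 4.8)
The plan is to establish the two inequalities separately, each time converting the notion of Lindel\"of ordinal into ordinary $[\gamma,\gamma]$-compactness by means of Proposition \ref{simple}(2), and then feeding this into the two parts of Lemma \ref{disg}. Recall that, by Proposition \ref{simple}(2), a space is $[\beta,\infty)$-compact exactly when it is $[\gamma,\gamma]$-compact for all $\gamma\geq\beta$; thus the Lindel\"of ordinal of a space is the least $\beta$ for which $[\gamma,\gamma]$-compactness holds for every $\gamma\geq\beta$. I will also use the two elementary facts that $\beta_1+^*\beta_2\geq\beta_1$ and $\beta_1+^*\beta_2\geq\beta_2$ (obtained by setting $\beta'=0$, respectively $\alpha'=0$, in the defining supremum of Definition \ref{defshift}, since $\sup_{\alpha'<\beta_1}(\alpha'+1)=\beta_1$), together with the monotonicity of the natural sum $\oplus$.

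For the inequality $\leq$, I would show that $X_1\cupdot X_2$ is $[\beta_1+^*\beta_2,\infty)$-compact. Fix any $\alpha\geq\beta_1+^*\beta_2$; then $\alpha\geq\beta_1$ and $\alpha\geq\beta_2$, so, since each $X_i$ is $[\beta_i,\infty)$-compact, $X_1$ is $[\beta_1,\alpha]$-compact and $X_2$ is $[\beta_2,\alpha]$-compact. Lemma \ref{disg}(2) then gives that $X_1\cupdot X_2$ is $[\beta_1+^*\beta_2,\alpha]$-compact. As $\alpha$ was an arbitrary ordinal $\geq\beta_1+^*\beta_2$, the disjoint union is $[\beta_1+^*\beta_2,\infty)$-compact, so its Lindel\"of ordinal is at most $\beta_1+^*\beta_2$.

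For the inequality $\geq$, I would fix $\gamma<\beta_1+^*\beta_2$ and prove that $X_1\cupdot X_2$ is not $[\gamma,\infty)$-compact. Using the supremum description of $+^*$, choose $\alpha'<\beta_1$ and $\beta'<\beta_2$ with $\gamma\leq\alpha'\oplus\beta'$. Since $\alpha'$ lies strictly below the Lindel\"of ordinal of $X_1$, the space $X_1$ is not $[\alpha',\infty)$-compact; applying Proposition \ref{simple}(2) to this failure yields an ordinal $\alpha\geq\alpha'$ with $X_1$ not $[\alpha,\alpha]$-compact. Symmetrically, there is $\beta''\geq\beta'$ with $X_2$ not $[\beta'',\beta'']$-compact. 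By monotonicity of $\oplus$, the ordinal $\delta:=\alpha\oplus\beta''$ satisfies $\delta\geq\alpha'\oplus\beta'\geq\gamma$, and $\delta$ is the (largest) shifted sum of $\alpha$ and $\beta''$, so Lemma \ref{disg}(1) shows $X_1\cupdot X_2$ is not $[\delta,\delta]$-compact. As $\gamma\leq\delta$, Proposition \ref{simple}(1) turns this into the failure of $[\gamma,\delta]$-compactness, hence $X_1\cupdot X_2$ is not $[\gamma,\infty)$-compact. Letting $\gamma$ range over all ordinals below $\beta_1+^*\beta_2$ shows the Lindel\"of ordinal is at least $\beta_1+^*\beta_2$; combined with the previous paragraph this gives equality.

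The step I expect to require the most care is the extraction in the lower bound: passing from \emph{$\alpha'$ is below the Lindel\"of ordinal of $X_1$} to a genuine witness \emph{$\alpha\geq\alpha'$ with $X_1$ not $[\alpha,\alpha]$-compact}. One cannot simply claim that $X_1$ fails to be $[\alpha',\alpha']$-compact, because the set of ordinals on which a space is $[\gamma,\gamma]$-compact need not be an initial segment --- for instance $(\kappa,\ord)$ is $[\gamma,\gamma]$-compact for all $\gamma$ with $\omega\leq\gamma<\kappa$, yet its Lindel\"of ordinal is $\kappa+\omega$. The correct route is to note that failure of $[\alpha',\infty)$-compactness forces, via Proposition \ref{simple}(2), failure of $[\gamma'',\gamma'']$-compactness for some $\gamma''\geq\alpha'$, and this $\gamma''$ serves as the witness $\alpha$. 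Everything else --- monotonicity of $\oplus$, the inequalities $\beta_1+^*\beta_2\geq\beta_i$, and the use of $\alpha\oplus\beta''$ as a shifted sum --- is routine.
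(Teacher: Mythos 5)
Your proof is correct and follows essentially the same route as the paper's: the upper bound comes directly from Lemma \ref{disg}(2), and the lower bound proceeds by extracting, via Proposition \ref{simple}(2), genuine $[\delta,\delta]$-incompactness witnesses above the chosen $\alpha'<\beta_1$ and $\beta'<\beta_2$ and feeding them into Lemma \ref{disg}(1) --- including the same key observation that one cannot simply assert failure of $[\alpha',\alpha']$-compactness itself. The only (inessential) difference is bookkeeping: you use the natural sum $\alpha\oplus\beta''$ and its monotonicity, whereas the paper enlarges an arbitrary shifted sum of the smaller ordinals to a shifted sum of the larger witnesses.
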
 

\begin{proof} 
The Lindel\"of ordinal of $ X_1 \cupdot X_2$
is   $ \leq \beta _1 +^* \beta _2 $, as an  immediate consequence of 
Lemma \ref{disg}(2).

 Hence, to prove equality, and in view of Proposition \ref{simple}(2), we have to show that, for every 
$\gamma <  \beta _1 +^* \beta _2 $, there is $\gamma''$ 
with $\gamma \leq \gamma ''< \beta _1 +^* \beta _2 $ 
and such that $ X_1 \cupdot X_2$ is not 
$[ \gamma''  , \gamma''  ]$-\brfrt compact.
Let $\gamma <  \beta _1 +^* \beta _2 $.
By the definition of $\beta _1 +^* \beta _2 $,
there are 
$ \beta _1' < \beta _1 $, $  \beta _2' < \beta _2 $,
and $\gamma' <  \beta _1 +^* \beta _2 $
such that 
$\gamma \leq \gamma '$ and 
$\gamma'$ is a shifted sum of
$ \beta _1'$ and $  \beta _2'  $.
By assumption, $X_1$ is not 
 $[ \beta _1'  , \infty )$-\brfrt compact, 
hence, by Proposition \ref{simple}(2), 
there is $\beta''_1 \geq \beta' _1$ such that
$X_1$ is not $[ \beta''_1 , \beta''_1  ]$-\brfrt compact,
and necessarily $\beta''_1 < \beta _1$.
Similarly, there is 
$\beta''_2 $ such that
$X_2$ is not $[ \beta''_2 , \beta''_2  ]$-\brfrt compact,
and  $ \beta '_2  \leq \beta''_2 < \beta _2$.

It follows trivially form the definition of a shifted sum,
and from $\beta'_1 \leq \beta'' _1$ and 
$\beta'_2 \leq \beta'' _2$, that there is some
shifted sum $\gamma''$ of $\beta''_1$
and $\beta''_2$ such that $\gamma' \leq \gamma ''$.   
By Lemma \ref{disg}(1), 
$ X_1 \cupdot X_2$ is not 
$[ \gamma '', \gamma '' ]$-\brfrt compact.
Since 
$\beta''_1 < \beta _1$ and
$\beta''_2 < \beta _2$, then 
$ \gamma ''< \beta _1 +^* \beta _2 $. Thus  $\gamma''$  is an ordinal as wanted.
\end{proof}  

We are now ready to present many improvements of
Examples \ref{exex}.

\begin{examples} \labbel{exgen}
 Let $\kappa$ be an infinite  regular cardinal, and $ n \in \omega$, $n \geq 2$.
\begin{enumerate} 
\item
If $X$ is the disjoint union of two copies
of $ \kappa $ with the initial interval topology 
$\iit $ 
of Definition \ref{toponcard},
then $X$ is not $[ \kappa ,  \kappa  ]$-\brfrt compact, 
not $[ \kappa+1 ,  \kappa +1 ]$-\brfrt compact, 
and
not $[ \kappa + \kappa , \kappa + \kappa  ]$-\brfrt compact, but it is
$[ \kappa  + \kappa + 1, \infty)$-\brfrt compact, 
$[ \kappa +2,  \kappa + \kappa  )$-\brfrt compact, and 
$[ 3, \kappa  )$-\brfrt compact.
Thus $X$ has Lindel\"of ordinal (Definition \ref{lindel}) $\kappa+ \kappa +1$. 
\item
More generally, if $X$ is the disjoint union of $n$  copies
of $ \kappa $ with the initial interval topology,
then $X$ is not $[ \kappa ,  \kappa  ]$-\brfrt compact, 
not $[ \kappa + \kappa , \kappa + \kappa   ]$-\brfrt compact, \dots,
not $[ \kappa \cdot n, \kappa \cdot n  ]$-\brfrt compact,
 but it is
$[ \kappa  \cdot n+ 1, \infty)$-\brfrt compact,
$[ \kappa +n,  \kappa + \kappa  )$-\brfrt compact,  
$[ \kappa + \kappa + n-1,  \kappa + \kappa + \kappa   )$-\brfrt compact, \dots, 
$[ \kappa \cdot (n-1) + 2, \kappa \cdot n  )$-\brfrt compact,
and 
$[ n+1,  \kappa  )$-\brfrt compact.
Its Lindel\"of  ordinal is $\kappa \cdot n +1$. 
\end{enumerate}    
\end{examples} 
 
\begin{example} \labbel{samecard} 
Suppose that $\kappa $ is regular and $ > \omega $, let 
$X_1= ( \kappa , \ord)$,  and 
let $X_2$ be the disjoint union
of two copies of $X_1$.

Then both $X_1$ and $X_2$
are $[\mu, \lambda ]$-compact, for every pair of 
infinite cardinals $\mu$ and $\lambda$ such that either
$  \kappa < \mu \leq \lambda $, or  $  \omega \leq \mu \leq \lambda < \kappa $;
furthermore, 
both $X_1$ and $X_2$
 are
not $[ \kappa, \kappa ]$-compact,
and not  $[ n, n ]$-compact,
for every positive integer $n$.
Thus, $X_1$ and  $X_2$ are  
 $[\mu, \lambda ]$-compact exactly for the same pairs of cardinals
$\mu$ and $\lambda$, whether finite or not.

However, $X_1$ is 
 $[ \kappa + \omega , \infty )$-compact,
while $X_2$ is not even 
 $[ \kappa + \kappa  , \kappa+ \kappa  ]$-compact.
Actually, $X_2$ is not  
 $[ \kappa + \kappa +n , \kappa+ \kappa +n ]$-compact,
for every $n< \omega$, but it
is  
$[ \kappa + \omega   , \kappa+ \kappa  )$-compact,
and
$[ \kappa + \kappa + \omega  , \infty  )$-compact.
Its Lindel\"of ordinal is $\kappa+ \kappa + \omega $. 
 \end{example}   

\begin{example} \labbel{condiscr} 
Suppose that $X_1$  is a nonempty set, and $\tau$ is a nonempty family of 
subsets of $X_1$. Suppose that $X_2$ is a discrete topological space of 
cardinality $\mu$, and that $X$ is the disjoint union of $X_1$ and $X_2$.  
 Then the following statements hold.
  \begin{enumerate} 
 \item 
If $X_1$ is not $[ \alpha   , \alpha ]$-compact,
$| \beta | \leq \mu$, and $\gamma$ is a shifted sum
of $\alpha$ and $\beta$, then $X$ is not 
 $[ \gamma  , \gamma  ]$-compact.
 \item 
If $X_1$ is  $[ \beta   , \alpha   ]$-compact,
then $X$ is
$[ \beta   +^* \mu^+ , \alpha  ]$-compact.
  \end{enumerate}

In particular, by adding a discrete finite set to Example \ref{exex}(2), we can get
a $[ \kappa +m+1, \infty)$-compact space which is not $[ \kappa +m, \kappa +m ]$-compact.
Thus we can have $\kappa+m+1$, as a Lindel\"of ordinal of some space.
In a similar way, by starting with Example \ref{exgen}, we can have
$\kappa \cdot n + m + 1$ as a Lindel\"of ordinal.   
\end{example}

\begin{proof}[Proofs]
Almost everything in Examples 
\ref{exgen}, \ref{samecard} and \ref{condiscr}  
follows from Proposition \ref{simple}, Examples \ref{exex} and Lemma \ref{disg}. 

An exception is  $[ \kappa +2,  \kappa + \kappa  )$-\brfrt compactness 
in Example \ref{exgen}(1), which is proved as follows.
Let $X$ be the disjoint union of two copies
of $ (\kappa , \iit )$, and consider an ordinal-indexed 
cover $\mathcal C$ of $X$. By  Example \ref{exex}(2),
there is a subsequence of  $\mathcal C$ which is a cover
of the first copy of $ (\kappa, \iit )$ and either has
order type $\kappa$, or consists of a single element, that is, has order 
type $1$. Similarly, there is a subsequence of  
$\mathcal C$ which is a cover
of the second copy and has the same possible order types.
By joining the above two partial subcovers,
we get a cover of the whole of $X$, whose order
type is a shifted sum of $ \beta _1$ and $\beta _ 2$, where 
the possible values  $ \beta _1$ and $\beta _ 2$ are either $\kappa$ or $1$.
Any such shifted sum, if $<\kappa + \kappa $,
must necessarily be $ \leq \kappa + 1$,
from which    
 $[ \kappa +2,  \alpha   ]$-\brfrt compactness
follows,
for every $\alpha $ with
$ \kappa + 2 \leq \alpha < \kappa + \kappa $.

The proofs of  $[ \kappa +n,  \kappa + \kappa )$-\brfrt compactness, 
$[ \kappa + \kappa + n-1,  \kappa + \kappa + \kappa   )$-\brfrt compactness, 
\dots\  
in \ref{exgen}(2), and of 
 $[ \kappa + \omega ,  \kappa + \kappa )$-\brfrt compactness
in \ref{samecard} are similar. 
\end{proof}

Many other similar examples can be obtained by combining
in various ways the examples in \ref{exex} with
Lemma \ref{disg}. Further counterexamples can be obtained
by applying disjoint unions to the examples we shall 
introduce  in Definition \ref{saa}.

\begin{example} \labbel{samecard2}
It is trivial to show that, for $\mu \leq \lambda $ infinite cardinals,
the disjoint union of two topological spaces is
$[ \mu, \lambda ]$-\brfrt compact 
if and only if the two spaces are both
$[ \mu, \lambda ]$-\brfrt compact
(this also follows from Lemma 3.8).

The space constructed in Example \ref{samecard} 
shows that, for ordinals, the disjoint union of two 
$[ \beta , \alpha  ]$-\brfrt compact  spaces is not necessarily
$[ \beta , \alpha  ]$-\brfrt compact. Just take $\alpha= \beta = \kappa + \kappa $,
for some regular $\kappa> \omega $, 
and consider the union of two disjoint copies of  $ ( \kappa , \ord)$.
 \end{example}

 One can also deal with 
the obviously defined notion of the disjoint union of an infinite family.
It appears to be promising also the possibility of considering a partial compactification
of 
an infinite disjoint union. This can be accomplished as follows.

 \begin{definition}\labbel{fr}
Suppose that $(X_i) _{i \in I} $
is a family of nonempty sets and, for each
$i \in I$, $\tau_i$ is a nonempty family of subsets of 
$X_i$. Suppose, for sake of simplicity, that 
each $\tau_i$ contains the empty set.

The
\emph{Frechet disjoint union}
$(X, \tau )$
of $(X_i, \tau _i) _{i \in I} $
is defined
as follows.

Set theoretically, $X = \{x\}  \cupdot \bigcupdot _{i \in I} X_i $ is the union of
(disjoint copies) of the $X_i$'s, plus a new element
$x$ which belongs to no $X_i$.

The members of $\tau$ are those subsets $O$ of $X$ 
which have one of the following two forms.
\[ 
O = \bigcupdot _{i \in I} O_i,
\] 
where $O_i \in \tau _i$, for every $i \in I$, or 
\[ 
O = \{x\}  \cupdot \bigcupdot _{i \in I} O_i,
\] 
where the $O_i$'s are such that, for some finite set $F \subseteq I$, it happens that
$O_i \in \tau _i$, for  $i \in F$, and
 $O_i = X_i$, for $i \in I \setminus F$.
\end{definition}

The above definition appears to be interesting, in the 
present context, since, as in Example \ref{samecard2},
$[ \beta , \alpha  ]$-\brfrt compactness of a Frechet disjoint union is not
necessarily preserved. However, (infinite) cardinal compactness and many
other topological properties are preserved, as asserted by the next proposition.

\begin{proposition}\labbel{disjfrech}
If $(X_i) _{i \in I} $ is a family of
topological spaces, then
their Frechet disjoint union
$X = \{x\}  \cupdot \bigcupdot _{i \in I} X_i $ is a topological space, 
and
is $T_0$, $T_1$, Hausdorff, regular, normal, $[\lambda,\mu]$-compact
(for given infinite cardinals $\lambda $ and $\mu$), 
has a base of clopen sets if and only if  so is (has) each $X_i$.
\end{proposition}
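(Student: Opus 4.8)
The plan is to exploit two structural facts about the Frechet disjoint union and then dispatch each listed property by a short case analysis, reducing in every case to the corresponding property of the summands. The first fact is that each $X_i$ is a clopen subspace of $X$ carrying exactly its original topology $\tau_i$: the set $X_i = \bigcupdot_j O_j$ with $O_i = X_i$ and $O_j = \emptyset$ otherwise is open of the first kind, its complement $\{x\} \cupdot \bigcupdot_{j \neq i} X_j$ is open of the second kind (with exceptional set $\{i\}$), and intersecting either kind of open set with $X_i$ returns a member of $\tau_i$. The second, and decisive, fact is a finiteness phenomenon at the distinguished point: by the very definition of $\tau$, every open neighbourhood of $x$ contains all but finitely many $X_j$ entirely, so dually every closed set omitting $x$ is contained in a finite subunion $\bigcupdot_{j \in F} X_j$. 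I would record these observations first, along with the routine check that $\tau$ is a topology (the only nonobvious point being that an arbitrary union of second-kind opens is again second-kind, since one such set already supplies all but finitely many coordinates).

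For the separation axioms $T_0$, $T_1$ and Hausdorff the forward implications are immediate, as all three are hereditary and each $X_i$ is a subspace. For the converses I would separate points by cases: two points of a common $X_i$ are separated inside the clopen set $X_i$ via the relevant property of $\tau_i$; two points in distinct $X_i, X_j$ are separated by the disjoint clopen sets $X_i, X_j$; and a point $y \in X_i$ is separated from $x$ using $X_i$ on one side and, on the other, a second-kind neighbourhood of $x$ whose $i$-th coordinate is $\emptyset$ (for Hausdorff) or $X_i \setminus \{y\}$ (for the $T_1$ half, which needs $\{y\}$ closed in $X_i$).

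Regularity and normality follow the same template, with the finiteness fact doing the real work. For regularity, a point $p \in X_i$ is separated from a closed set $C$ by applying regularity of $X_i$ to $p$ and $C \cap X_i$, obtaining $U', V' \subseteq X_i$, and then enlarging $V'$ to $V' \cup (X \setminus X_i)$, which stays open and disjoint from $U' \subseteq X_i$ while swallowing the rest of $C$; separating $x$ from $C$ needs nothing about the summands, since $C \subseteq \bigcupdot_{j \in F} X_j$ for some finite $F$, so one takes $V = \bigcupdot_{j \in F} X_j$ and $U = \{x\} \cupdot \bigcupdot_{j \notin F} X_j$. For normality, given disjoint closed $C, D$, the substantive case is $x \in D$; then $C$ lies in finitely many coordinates $F$, and I would apply normality of each $X_j$ ($j \in F$) to $C \cap X_j$ and $D \cap X_j$, set $V_j = X_j$ for $j \notin F$, and assemble $U = \bigcupdot_j U_j$ and $V = \{x\} \cupdot \bigcupdot_j V_j$; the case $x \notin C \cup D$ is handled coordinatewise with no interaction at $x$. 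The forward directions use heredity to closed subspaces, each $X_i$ being clopen.

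For $[\lambda, \mu]$-compactness (with $\lambda, \mu$ infinite) the forward direction adds the clopen set $X \setminus X_i$ to a $\leq \mu$-sized open cover of $X_i$, invokes $[\lambda,\mu]$-compactness of $X$, and discards $X \setminus X_i$ from the resulting $<\lambda$ subcover. The converse is where the finiteness fact is most visible and is the step I expect to carry the proof: from a cover of size $\leq \mu$ pick a member $O^{(k_0)} \ni x$; it already covers $x$ and every $X_j$ with $j \notin F_0$ for some finite $F_0$; for each of the finitely many $j \in F_0$, use $[\lambda,\mu]$-compactness of $X_j$ to extract a subcover of $X_j$ of size $<\lambda$; the union of these finitely many pieces with $\{O^{(k_0)}\}$ covers $X$ and has size $<\lambda$, because a finite sum of cardinals below an infinite $\lambda$ stays below $\lambda$. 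Finally, for a clopen base, the forward direction restricts a clopen base to the clopen subspaces $X_i$, while the converse notes that clopen subsets of $X_i$ are clopen in $X$ (as $X_i$ is clopen), yielding clopen neighbourhood bases at every point of $\bigcupdot_i X_i$, and that the sets $N_F = \{x\} \cupdot \bigcupdot_{j \notin F} X_j$ ($F$ finite) are clopen, with complement $\bigcupdot_{j \in F} X_j$, and form a neighbourhood base at $x$. The recurring obstacle, and the unifying idea, is exactly that neighbourhoods of $x$ omit only finitely many coordinates, so that the extra point never forces an infinite amount of work.
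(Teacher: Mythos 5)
Your proposal is correct and follows essentially the same route as the paper, which declares the verification straightforward and singles out exactly one observation for regularity and normality: a closed set of $X$ meeting infinitely many $X_i$'s must contain $x$ — the contrapositive of your central finiteness fact that closed sets omitting $x$ lie in a finite subunion. Your write-up simply supplies the routine details the paper leaves to the reader.
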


\begin{proof}
Straightforward. We shall comment only on regularity and normality.
For these, just observe that 
if $C$ is closed in $X$ and $C$
has nonempty intersection with infinitely many
$X_i$'s, then $x \in C$.
\end{proof}

Notice that the spaces in Examples \ref{exex}(2)  and  \ref{exgen} satisfy very 
few separation axioms. Indeed, just assuming that
$X$ is a $T_1$ topological space, it is impossible to construct
similar counterexamples. See Section \ref{t1sec}.

Curiously enough, Counterexample \ref{exgen}  cannot be generalized in a simple way
 in order to get  a space $X$ which is not $[ \kappa \cdot \kappa , \kappa \cdot \kappa   ]$-\brfrt compact, but which is, say,
$[ \kappa \cdot \kappa + \kappa , \kappa \cdot \kappa + \kappa   ]$-\brfrt compact.
 Such a counterexample exists (Remark \ref{w+ww2}), but we need a much more involved construction. Indeed, if $X$ is such a counterexample, then $|X|> \kappa $, 
as we shall show in the next section.

\subsection{A note on shifted sums and mixed sums} \labbel{rmkonsh} 
We now give the promised characterization
of those ordinals $\gamma$ which can be realized as a shifted sum
of two ordinals $\alpha$ and $\beta$.
 
Every ordinal $ \gamma $ 
can be expressed in a unique way
in
additive normal form as
\[
 \gamma =
 \omega ^ {\eta_h}  + \omega ^ {\eta _{h-1}}   +
\dots
+ \omega ^ {\eta_1}  + \omega ^ {\eta_0} , 
\]
  for some  integer 
$h \geq 0$, and ordinals
$ \eta _h \geq \eta _{h-1} \geq \dots \geq \eta_1 \geq \eta_0  $.
Hence to any ordinal $\gamma$ we can uniquely associate the 
finite string  $ \sigma ( \gamma ) $ of  ordinals 
in (not necessarily strictly) decreasing order $  \eta _h \eta _{h-1}\dots \eta_1 \eta_0  $. We are allowing the empty string, which is associated to the ordinal $0$. 

To every  string 
of ordinals $ \sigma = \eta _h \eta _{h-1}\dots \eta_1 \eta_0  $
we can associate the ordinal
$ \delta  ( \sigma ) =
 \omega ^ {\eta_h}  + \omega ^ {\eta _{h-1}}   +
\dots
+ \omega ^ {\eta_1}  + \omega ^ {\eta_0} 
$.
We are not necessarily assuming that the ordinals
in $ \sigma$ are in decreasing order. However,
an arbitrary string $ \sigma$ can  be \emph{reduced}
to a string $ \sigma^r$ whose elements 
are in (not necessarily strictly) decreasing order,
by taking out from $ \sigma$ all those elements 
which are followed from some strictly larger element.
Notice that, anyway,  
$ \delta  ( \sigma ^r) =  \delta  ( \sigma ) $,
since, for example, $ \omega^ \xi + \omega ^{ \xi'}= \omega ^{ \xi'}$,
if $\xi < \xi' $.  In particular, if
$\gamma= \delta ( \sigma )$,
then 
$ \sigma ( \gamma ) = \sigma ^r$,
since the correspondence between ordinals
and strings consisting of decreasing ordinals is bijective.  

We let $*$ denote string juxtaposition. 

\begin{proposition} \labbel{chsh}
Suppose that $\alpha$, $\beta$, $\gamma$ are ordinals,
and $ \sigma ( \gamma )= \eta _h \eta _{h-1}\dots \eta_1 \eta_0  $.
Then the following conditions are equivalent.
  \begin{enumerate}
    \item   
 $\gamma$ is a shifted sum of $\alpha$ and $\beta$.
      \item 
There are (possibly empty) strings
$ \sigma_h$, \dots, $ \sigma_0$
and    
$ \sigma'_h$, \dots, $ \sigma'_0$
such that 
  \begin{enumerate} 
   \item 
$  \alpha = \delta (\sigma_h * \sigma _{h-1} * \dots *  \sigma_0)  $, 
  \item 
$  \beta  = \delta (\sigma'_h * \sigma' _{h-1} * \dots *  \sigma'_0 ) $, 
\item   
for each $i=0, \dots, h$, either $ \sigma_i= \eta _i$,
or $ \sigma_i$ is empty, or every element of $ \sigma_i$
is $<\eta_i$,
\item   
for each $i=0, \dots, h$, either $ \sigma'_i= \eta _i$,
or $ \sigma'_i$ is empty, or every element of $ \sigma'_i$
is $<\eta_i$,   
  \item   
for each $i=0, \dots, h$, either $ \sigma_i= \eta _i$,
or   $ \sigma'_i= \eta _i$.
\end{enumerate}
     \item
Same as (2)
 with conditions (a) and (b) replaced by
  \begin{enumerate} 
   \item [(a$'$)]
$ \sigma( \alpha )= \sigma_h * \sigma _{h-1} * \dots *  \sigma_0  $, 
  \item [(b$'$)]
$ \sigma( \beta  )= \sigma'_h * \sigma' _{h-1} * \dots *  \sigma'_0  $. 
\end{enumerate}
 \end{enumerate} 
 \end{proposition}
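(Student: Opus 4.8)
The plan is to prove the two equivalences $(1)\Leftrightarrow(2)$ and $(2)\Leftrightarrow(3)$ separately, treating the first geometrically via the block decomposition already set up in Lemma \ref{lemshifted}, and treating the passage between the $\delta$-formulation (2) and the $\sigma$-formulation (3) by a purely combinatorial reduction of strings. Throughout I reuse the intervals $K_i=[\gamma_{i+1},\gamma_i)$ of Lemma \ref{lemshifted}, so that $\gamma$ is the ordered disjoint union of $K_h,K_{h-1},\dots,K_0$, each $K_i$ of order type $\omega^{\eta_i}$. I shall write $\mathrm{ot}(\cdot)$ for order type. The two facts I rely on repeatedly are: $\delta$ is additive over juxtaposition, i.e.\ $\delta(\sigma*\sigma')=\delta(\sigma)+\delta(\sigma')$, which is immediate from the definition of $\delta$; and the additive-indecomposability property of $\omega^{\eta_i}$ invoked in Lemma \ref{lemshifted}, namely that whenever $K_i$ is covered by two subsets, at least one of them already has order type $\omega^{\eta_i}$.

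For $(1)\Rightarrow(2)$, suppose $\gamma=I\cup J$ with $I$ of type $\alpha$ and $J$ of type $\beta$, and set $\sigma_i=\sigma(\mathrm{ot}(I\cap K_i))$ and $\sigma'_i=\sigma(\mathrm{ot}(J\cap K_i))$. Since $I\cap K_i$ is a subset of an ordinal of type $\omega^{\eta_i}$, its order type is $\le\omega^{\eta_i}$, so it is either $\omega^{\eta_i}$ (giving $\sigma_i=\eta_i$), or $0$ (giving $\sigma_i$ empty), or an ordinal $<\omega^{\eta_i}$ all of whose normal-form exponents are $<\eta_i$; this yields (c), and symmetrically (d). Because $K_i=(I\cap K_i)\cup(J\cap K_i)$, indecomposability forces one of the two order types to equal $\omega^{\eta_i}$, giving (e). Finally, reading $I$ blockwise from top to bottom, $\alpha=\mathrm{ot}(I)=\mathrm{ot}(I\cap K_h)+\dots+\mathrm{ot}(I\cap K_0)=\delta(\sigma_h*\dots*\sigma_0)$, which is (a), and (b) is symmetric. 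The converse $(2)\Rightarrow(1)$ is a construction: inside each $K_i$ place a subset $I_i$ of order type $\delta(\sigma_i)$ and a subset $J_i$ of order type $\delta(\sigma'_i)$, taking the block whose string equals $\eta_i$ to be all of $K_i$. Each $\delta(\sigma_i)\le\omega^{\eta_i}$ by (c) (in the small case the leading exponent is $<\eta_i$, so the value is $<\omega^{\eta_i}$), so the required subsets exist; condition (e) guarantees that at least one of $I_i,J_i$ equals $K_i$, whence $\bigcup_i K_i\subseteq I\cup J$ and so $I\cup J=\gamma$. Additivity of $\delta$ and the same blockwise reading give $\mathrm{ot}(I)=\alpha$ and $\mathrm{ot}(J)=\beta$.

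The implication $(3)\Rightarrow(2)$ is immediate, since $\sigma(\alpha)=\sigma_h*\dots*\sigma_0$ gives $\alpha=\delta(\sigma(\alpha))=\delta(\sigma_h*\dots*\sigma_0)$, and likewise for $\beta$. The substantive remaining step is $(2)\Rightarrow(3)$: the concatenation $\sigma_h*\dots*\sigma_0$ need not be decreasing (a small exponent contributed in block $i$ may be followed by a larger one from a lower block), so it is typically not yet $\sigma(\alpha)$. I would repair this by reduction. Let $\widehat{\sigma}_i$ consist of the elements of $\sigma_i$ that survive reduction of the full concatenation, i.e.\ those not followed anywhere later by a strictly larger element; then $\widehat{\sigma}_h*\dots*\widehat{\sigma}_0$ is exactly the reduced string, hence equals $\sigma(\alpha)$ because $\delta(\sigma_h*\dots*\sigma_0)=\alpha$. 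One then checks that the block constraints persist: if $\sigma_i$ was empty, so is $\widehat{\sigma}_i$; if $\sigma_i=\eta_i$, then since every later element lies in a block $j<i$ and is $\le\eta_j\le\eta_i$, the single entry $\eta_i$ is never dominated and survives, so $\widehat{\sigma}_i=\eta_i$; and if every element of $\sigma_i$ was $<\eta_i$, the survivors remain $<\eta_i$. In particular the entry witnessing (e) for the old strings survives, so (e) holds for $\widehat{\sigma}_i,\widehat{\sigma}'_i$, and performing the same reduction on the $\beta$-side completes (3).

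The main obstacle is precisely this last step: the geometric decomposition coming from $(1)$ delivers (2) but not directly (3), because the per-block order types, while individually well-behaved, concatenate into a string that may fail to be in normal form. The delicate point is to verify that reduction deletes only ``small'' entries and never destroys a block-filling entry $\eta_i$ nor promotes a small block into a full one; this is exactly where the ordering $\eta_h\ge\dots\ge\eta_0$ of the exponents of $\gamma$ is used to make the argument close.
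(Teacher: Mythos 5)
Your proposal is correct and follows essentially the same route as the paper's own proof: the block decomposition into the intervals $K_i$ of Lemma \ref{lemshifted} for $(1)\Leftrightarrow(2)$ (including the initial-segment construction for $(2)\Rightarrow(1)$), and the string-reduction argument for $(2)\Rightarrow(3)$, with the same key observation that an entry $\eta_i$ witnessing (e) survives reduction because all later entries lie in blocks $j<i$ and are $\le\eta_j\le\eta_i$. The only difference is organizational (you prove the two equivalences separately, while the paper chains the implications), which is immaterial.
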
  

\begin{proof} 
For $i=0, \dots, h$,
define the intervals $K_{i}$ 
as in the proof of Lemma \ref{lemshifted}. Recall
that each $K_{i}$ has order type $ \omega ^{ \eta _i} $, 
that $\gamma$ is the disjoint union of the $K_{i}$'s, and that,
for every $i > i'$, each element of  $K_{i}$ precedes
every element of $K_{i'}$,
in the ordering induced by the ordering on $\gamma$.

(1) $\Rightarrow $  (2) By (1), $\gamma= I \cup J$, for some $I$ and $J$ of order types,
respectively, $\alpha$ and $\beta$.  For $i=0, \dots, h$, let 
$\alpha_i$ be the order type of $I \cap K_i$,
thus $\alpha= \alpha _h + \dots + \alpha _0$,
by the above properties of the $K_{i}$'s.
Put $ \sigma_i = \sigma ( \alpha _i)$.
Then (a) is satisfied, since 
$\delta (\sigma_h * \dots *  \sigma_0)
= \delta (\sigma_h) + \dots + \delta ( \sigma_0)$,
and since $\delta( \sigma ( \varepsilon ))= \varepsilon $,
for every ordinal $\varepsilon$.
Moreover, (c), too, holds, since the order type
of $\alpha_i$ is $\leq$  the order type of $K_{i}$, thay is,
 $ \omega ^{ \eta _i} $. Similarly, letting 
$ \beta _i$ be the order type of $J \cap K_i$,
and $ \sigma'_i = \sigma ( \beta  _i)$,
we have that (b) and (d) hold.
Finally, as remarked in the proof of Lemma \ref{lemshifted}, since 
$K_{i}= (I \cap K_i) \cup (J \cap K_i)$ has order type   
 $ \omega ^{ \eta _i} $,
then either $\alpha_i$ or $\beta_i$ has
order type   
 $ \omega ^{ \eta _i} $, thus (e) holds.

(2) $\Rightarrow $  (3)
Observe that $(\sigma_h * \dots *  \sigma_0 )^r=
\sigma^-_h * \dots *  \sigma^-_0 $,
for appropriate strings $ \sigma^-_i$,
such that each  $ \sigma^-_i$ is a substring of
$ \sigma_i$ (however, it is not necessarily the case that $ \sigma^-_i = \sigma^s_i$).
Then, by the last remark before the statement of the proposition,
$ \sigma( \alpha )= (\sigma_h * \dots *  \sigma_0 )^r = 
\sigma^-_h * \dots *  \sigma^-_0  $.
Moreover, if the $ \sigma_i$'s satisfy (c), then also the $ \sigma^-_i$'s
satisfy (c), since we are just taking out elements. Furthermore,
if $ \sigma_i= \eta _i$ and (c) holds, then this occurrence of
$ \eta _i$ is not deleted in $ \sigma_i^-$,
since $  \eta _i \geq \eta _{i'} $, for $i >i'$.
By taking further strings 
 $ \sigma'^{-}_i$ such that  $(\sigma'_h * \dots *  \sigma'_0 )^r=
\sigma'^{-}_h * \dots *  \sigma'^-_0 $,
and arguing as before, we get that the  $ \sigma^-_i$'s
and the $ \sigma'^-_i$'s witness (3).

(3) $\Rightarrow $  (2) is trivial, since $\delta( \sigma ( \varepsilon ))= \varepsilon $,
for every ordinal $\varepsilon$.

(2) $\Rightarrow $  (1)
For $i=0, \dots, h$, put $\alpha_i= \delta ( \sigma _i)$
and $ \beta _i= \delta ( \sigma' _i)$.
By Clauses (c)-(d), $\alpha_i $ and $  \beta _i $ are 
both $  \leq \omega ^{ \eta _i} $. 
Let $I_i$ be the initial segment of $K_i$ of order type
$\alpha_i$, and $J_i$ be the initial segment of $K_i$ of order type
$ \beta _i$. 
The definition is well posed, since the order type of 
$K_i$ is $ \omega ^{ \eta _i} $.
 By Clause (e), $I_i \cup J_i = K_i$.
If we put $I= I_0 \cup \dots \cup I_h$ and  
$J= J_0 \cup \dots \cup J_h$, then 
$I \cup J = K_0 \cup \dots \cup  K_h = \gamma $.
Notice that,
by the properties of the $K_i$'s, 
  $I$ has order type 
 $  \alpha _h + \dots + \alpha _0=
\delta (\sigma_h) + \dots + \delta ( \sigma_0)=
\delta (\sigma_h * \dots *  \sigma_0) = \alpha $,
by Clause (a). Similarly, by Clause (b),
$J$ has order type $\beta$, thus we are done.
\end{proof} 
Notice that, given $\alpha$ and $\beta$, there is only a finite number of 
ordinals $\gamma$ which are shifted sums
of $\alpha$ and $\beta$. Indeed, by Proposition \ref{chsh}, the elements
of $ \sigma( \gamma )$ are a (possibly proper) 
subset of the union of the sets of the elements
of $ \sigma( \alpha ) $ and of $ \sigma ( \beta )$
(counting multiplicities), and this can be accomplished 
only in a finite number of ways.

On the other hand, given $\gamma$, it might be the case  that $\gamma$ can be realized
 in infinitely many ways as a shifted sum. For example,
for every $ n < \omega$,
$ \omega^ \omega +1$   can be realized as the shifted sum
of $ \omega^ \omega $ and $  \omega ^n+1$.

Notice that $\gamma$ is the natural sum
$\alpha \oplus \beta $ of $\alpha$ and $\beta$ if and only if
a  representation as  in Proposition \ref{chsh} exists
in such a way that, 
for each $i=0, \dots, h$, either $ \sigma_i= \eta _i$ and $ \sigma'_i$ is empty,
or   $ \sigma'_i= \eta _i$ and $ \sigma_i$ is empty.

The notion of a shifted sum is related to a known  similar notion, 
usually called mixed sum (\/\emph{Mischsumme}\/, \cite{neumer,lauchli}).
In our notation, $\gamma$ is a \emph{mixed sum}
of $\alpha$ and $\beta$ if and only if and only $\gamma$ can be realized as a shifted sum of $\alpha$ and $\beta$ as in Definition  \ref{defshift},
with the additional assumption that $I \cap J = \emptyset $.

\begin{proposition} \labbel{mixed} 
Under the assumptions in Proposition \ref{chsh},
we have that $\gamma$ is a mixed sum of $\alpha$ and $\beta$ 
if and only if Condition (2) (equivalently, Condition (3)) in \ref{chsh} 
holds with the following additional clause
  \begin{enumerate}   
 \item [(f)]
For each $i=0, \dots, h$, if $\eta_i=0$, then either $ \sigma_i$
or   $ \sigma'_i$ is the empty string.
 \end{enumerate} 
\end{proposition}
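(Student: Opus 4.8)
The proposition characterizes when $\gamma$ is a *mixed sum* of $\alpha$ and $\beta$ (a shifted sum realized with $I\cap J=\emptyset$), in terms of the normal-form decomposition from Proposition \ref{chsh}, with the extra clause (f): whenever $\eta_i=0$, at most one of $\sigma_i,\sigma'_i$ is nonempty. The key intuition is that the only obstruction to making $I$ and $J$ disjoint, given a shifted-sum decomposition, arises in the blocks $K_i$ of order type $\omega^{\eta_i}$: when $\eta_i>0$ the block $K_i$ is infinite and "absorbs" initial segments, so two initial segments each of some order type $\le\omega^{\eta_i}$ whose union is all of $K_i$ can always be replaced by *disjoint* subsets of $K_i$ still covering it with the same two order types; but when $\eta_i=0$ the block $K_i$ is a single point, which cannot be split into two disjoint nonempty pieces.

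**Plan of proof.** I would prove both directions by revisiting the block decomposition $\gamma=K_h\cupdot\cdots\cupdot K_0$ used in Lemma \ref{lemshifted} and in the proof of Proposition \ref{chsh}, where $K_i$ has order type $\omega^{\eta_i}$. For the forward direction, suppose $\gamma$ is a mixed sum, so $\gamma=I\cupdot J$ with $I,J$ disjoint of order types $\alpha,\beta$. Running exactly the argument of (1)$\Rightarrow$(2) in Proposition \ref{chsh} produces strings $\sigma_i=\sigma(\alpha_i)$, $\sigma'_i=\sigma(\beta_i)$ where $\alpha_i,\beta_i$ are the order types of $I\cap K_i$, $J\cap K_i$; all of (a)--(e) hold as before. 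The new point is (f): if $\eta_i=0$ then $K_i$ is a singleton, and since $I,J$ are disjoint, that single point lies in at most one of them, so at least one of $\alpha_i,\beta_i$ is $0$, i.e.\ at least one of $\sigma_i,\sigma'_i$ is empty.

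**The converse, which is the main content.** For (2)+(f)$\Rightarrow$ mixed sum, I would adapt the construction in (2)$\Rightarrow$(1) of Proposition \ref{chsh}, but instead of taking $I_i,J_i$ to be initial segments of $K_i$ (which may overlap), I must choose them *disjoint* while keeping their order types $\alpha_i=\delta(\sigma_i)$ and $\beta_i=\delta(\sigma'_i)$ and maintaining $I_i\cup J_i=K_i$. Fix $i$. If $\eta_i>0$, then $K_i$ has order type $\omega^{\eta_i}$, and by (e) at least one of $\alpha_i,\beta_i$ equals $\omega^{\eta_i}$; say $\alpha_i=\omega^{\eta_i}$ (the other case is symmetric). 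Take $I_i=K_i$ and let $J_i$ be the set of the *last* $\beta_i$ elements of $K_i$ in a tail of order type $\beta_i$ — but since we need disjointness, instead take $I_i$ to be $K_i$ minus a final segment and place $J_i$ in that final segment; the essential fact is that an infinite ordinal $\omega^{\eta_i}$ can be partitioned into two disjoint subsets of *prescribed* order types $\alpha_i$ and $\beta_i$ whenever $\max(\alpha_i,\beta_i)=\omega^{\eta_i}$ and $\min(\alpha_i,\beta_i)\le\omega^{\eta_i}$. This is the one nontrivial ordinal-arithmetic lemma; it follows because $\omega^{\eta_i}$ is additively indecomposable, so $\omega^{\eta_i}=\beta_i+\omega^{\eta_i}$ for any $\beta_i<\omega^{\eta_i}$ (and trivially when $\beta_i=\omega^{\eta_i}$ one splits $\omega^{\eta_i}=\omega^{\eta_i}+\omega^{\eta_i}$ only if $\eta_i$ permits, but here $\min\le\max=\omega^{\eta_i}$ so placing $J_i$ as an initial segment and $I_i$ as the complementary tail of order type $\omega^{\eta_i}=\beta_i+\omega^{\eta_i}$ works). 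If instead $\eta_i=0$, clause (f) guarantees one of $\sigma_i,\sigma'_i$ is empty, so one of $\alpha_i,\beta_i$ is $0$; the other is forced to be $1=\omega^0$ by (e), and we assign the single point of $K_i$ to whichever of $I,J$ is nonempty — automatically disjoint. Setting $I=\bigcupdot_i I_i$ and $J=\bigcupdot_i J_i$ gives disjoint sets with $I\cup J=\gamma$, of order types $\alpha,\beta$ by the $K_i$-ordering as in Proposition \ref{chsh}, exhibiting $\gamma$ as a mixed sum.

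**Where the difficulty lies.** The only genuinely delicate step is the disjoint partition of each infinite block $K_i\cong\omega^{\eta_i}$ into pieces of the two prescribed order types; I expect this to hinge entirely on additive indecomposability of $\omega^{\eta_i}$, so once that absorption identity is invoked the construction is mechanical. The singleton case $\eta_i=0$ is exactly where clause (f) becomes necessary and sufficient, and checking that (f) is *forced* in the forward direction and *suffices* in the converse is the heart of the statement; everything else is bookkeeping inherited verbatim from the proof of Proposition \ref{chsh}.
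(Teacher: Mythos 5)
Your overall architecture coincides with the paper's: both directions reuse the block decomposition $\gamma = K_h \cupdot \dots \cupdot K_0$ from Lemma \ref{lemshifted}, the forward direction extracts clause (f) from the fact that a singleton block $K_i$ cannot meet two \emph{disjoint} sets, and the converse reduces to splitting each block $K_i$ into disjoint pieces of the prescribed order types $\alpha_i = \delta(\sigma_i)$ and $\beta_i = \delta(\sigma'_i)$. The paper merely asserts this splitting fact as an observation; you attempt to prove it, and that is where there is a genuine gap. Your construction --- take $J_i$ to be an initial segment of $K_i$ of order type $\beta_i$ and $I_i$ the complementary tail, using the absorption identity $\beta_i + \omega^{\eta_i} = \omega^{\eta_i}$ --- is valid only when $\beta_i < \omega^{\eta_i}$. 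In the case $\alpha_i = \beta_i = \omega^{\eta_i}$, which clause (e) explicitly allows and which genuinely occurs (take $\gamma = \alpha = \beta = \omega$, realized as the mixed sum of the evens and the odds: here $h=0$, $\eta_0 = 1$, and $\sigma_0 = \sigma'_0 = \eta_0$), your recipe makes $J_i$ all of $K_i$ and $I_i = \emptyset$, so $I$ comes out with the wrong order type; the identity you invoke also fails there, since $\omega^{\eta_i} + \omega^{\eta_i} = \omega^{\eta_i}\cdot 2 \neq \omega^{\eta_i}$. Your parenthetical remark (``only if $\eta_i$ permits'') shows awareness of the problem, but then falls back on the same initial-segment trick, which cannot work in that case.

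The missing ingredient is a different splitting when both prescribed types are maximal: for $\eta_i \geq 1$, partition $K_i$ (of order type $\omega^{\eta_i}$) by successor parity --- write each element uniquely as $\lambda + n$ with $\lambda$ limit or zero and $n < \omega$, and separate the elements with $n$ even from those with $n$ odd. The intervals $[\lambda, \lambda + \omega)$ partition $K_i$, each parity class meets each such interval in a set of order type $\omega$, hence each class has the same order type as $K_i$ itself, namely $\omega^{\eta_i}$. With this interleaving supplied for the case $\alpha_i = \beta_i = \omega^{\eta_i}$ (and your tail argument kept for $\min(\alpha_i,\beta_i) < \omega^{\eta_i}$), your proof becomes complete and then agrees in substance with the paper's, which states the disjoint-splitting observation without proof.
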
 
 
\begin{proof} 
If $\gamma$ is a mixed sum of $\alpha$ and $\beta$, then, in particular,
it is a shifted sum, hence the conditions in Proposition \ref{chsh}(2)(3) hold.
In order to prove (f), 
notice that, if $\gamma$ is a mixed sum of $\alpha$ and $\beta$, and 
$ \eta _i=0$, then $|K_i|=1$, hence 
either $I_i$ or $J_i$ is empty, since, 
in the present situation, they are disjoint and contained in 
$K_i$, thus (f) follows.

It remains to show how to get disjoint $I_i$ and $J_i$, for each $i$, in the proof of
(2) $\Rightarrow $  (1)  (hence we get disjoint $I$ and $J$, since the $K_i$'s are pairwise disjoint). If $\eta_i=0$, this follows from Clause (f).
Otherwise, observe that any set of order type $ \omega ^{ \eta _i} $
can always be expressed as the union of two disjoint subsets
having prescribed order types $\alpha_i$ and $\beta_i$, provided
that $\alpha_i$ and $\beta_i$ are both $ \leq  \omega ^{ \eta _i} $,
and their maximum is  $ \omega ^{ \eta _i} $.   
\end{proof}
 
A somewhat similar characterization of those ordinals $\gamma$ which
can be expressed as a mixed sum of $\alpha$ and $\beta$ has been given in 
\cite{lauchli}. Actually,  \cite{lauchli} deals with mixed sums with possibly
more than two summands. Also the results presented here can be easily generalized to the case of more than two summands. We leave this to the reader.
 
We now discuss in more details the relationship between the notions of a shifted sum and of a mixed sum. It turns out that the only difference is made by the ``finite  tail'' 
of $\gamma$, that is, if $ \gamma = \gamma ^\bullet +m$, with
$\gamma^\bullet$ limit, then the ways $\gamma^\bullet$ can be realized as a shifted sum determine the ways $\gamma$ can be realized as a mixed sum.  

\begin{corollary} \labbel{mixshif}
Let $\alpha$, $\beta$, and $\gamma$ be ordinals.
  \begin{enumerate}    
\item   
Suppose that  $  \gamma $ is a limit ordinal. Then $\gamma$ 
is a mixed sum of $\alpha$ and $\beta$ if and only if $\gamma$
is a shifted sum of $\alpha$ and $\beta$ (and, if this is the case, then 
either $\alpha$ or $\beta$ is limit, but not necessarily both).
\item  
More generally, suppose that $\gamma= \gamma^\bullet + m$, with $\gamma^\bullet$ limit, and $ \omega > m \geq 0$.
Then $\gamma$ 
is a mixed sum of $\alpha$ and $\beta$ if and only if 
there are integers $n, p \geq 0$ such that $n +p = m$,
$\alpha$ has the form $ \alpha^\bullet + n$,   
$\beta$ has the form $\beta^\bullet + p$, 
and
$\gamma^\bullet$
is a shifted sum of $\alpha^\bullet$ and $\beta^\bullet$
 (one of $\alpha^\bullet$ and $\beta^\bullet$ must thus be limit, but not necessarily both). 
\end{enumerate} 
 \end{corollary}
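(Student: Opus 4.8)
The plan is to read off both parts directly from the two string-level characterizations already in hand: Proposition \ref{chsh} for shifted sums and Proposition \ref{mixed} for mixed sums. The decisive observation is that these characterizations are identical except for the extra clause (f) of Proposition \ref{mixed}, and (f) constrains only the indices $i$ with $\eta_i=0$. Writing $\sigma(\gamma)=\eta_h\cdots\eta_0$ in additive normal form, the exponents equal to $0$ are precisely the trailing ones (the $\eta_i$ are non-increasing), and there are exactly $m$ of them, where $\gamma=\gamma^\bullet+m$ with $\gamma^\bullet$ limit; these indices $0,\dots,m-1$ carry the finite tail, while the indices $m,\dots,h$ reproduce $\sigma(\gamma^\bullet)=\eta_h\cdots\eta_m$.

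First I would dispose of part (1), the case $m=0$. If $\gamma$ is limit there is no index with $\eta_i=0$, so clause (f) is vacuous; hence the condition characterizing mixed sums coincides verbatim with the one characterizing shifted sums, giving the equivalence. For the parenthetical assertion, note $\eta_0\geq 1$, and by clause (e) one of $\sigma_0,\sigma'_0$ equals $\eta_0$; since reduction never deletes the last entry of a string, the corresponding summand ($\alpha$ or $\beta$) has smallest normal-form exponent $\eta_0\geq 1$ and is therefore limit. That the other summand need not be limit is witnessed, e.g., by $\omega$ being a mixed sum of $\omega$ and $1$ (split one point off $\omega$).

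Then for part (2) I would analyze the $m$ trailing indices. For $i<m$ we have $\eta_i=0$, so clause (c) forces each $\sigma_i$ to be either the one-letter string $\eta_i=0$ or empty (there is no ordinal $<0$), and likewise for $\sigma'_i$; clause (e) forces at least one of $\sigma_i,\sigma'_i$ to equal $0$, while clause (f) forces at least one to be empty. Thus for each $i<m$ exactly one of $\sigma_i,\sigma'_i$ is the letter $0$ and the other is empty. Letting $n$ (resp.\ $p$) count the indices assigned to $\alpha$ (resp.\ to $\beta$), we get $n+p=m$. The remaining strings $\sigma_m,\dots,\sigma_h$ and $\sigma'_m,\dots,\sigma'_h$ satisfy clauses (a)--(e) of Proposition \ref{chsh} relative to $\sigma(\gamma^\bullet)$, so, setting $\alpha^\bullet=\delta(\sigma_h*\cdots*\sigma_m)$ and $\beta^\bullet=\delta(\sigma'_h*\cdots*\sigma'_m)$, the ordinal $\gamma^\bullet$ is a shifted sum of $\alpha^\bullet$ and $\beta^\bullet$. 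Since $\delta$ turns juxtaposition into ordinal addition, $\alpha=\delta(\sigma_h*\cdots*\sigma_0)=\alpha^\bullet+n$ and $\beta=\beta^\bullet+p$. The converse runs this backwards: given $n,p,\alpha^\bullet,\beta^\bullet$, I would take strings witnessing that $\gamma^\bullet$ is a shifted sum of $\alpha^\bullet,\beta^\bullet$ and append, on the $m$ trailing indices, $n$ copies of ``($0$, empty)'' and $p$ copies of ``(empty, $0$)''; conditions (a)--(f) are then immediate and Proposition \ref{mixed} yields that $\gamma$ is a mixed sum of $\alpha$ and $\beta$. The closing parenthetical follows by applying the part-(1) argument to the limit ordinal $\gamma^\bullet$.

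The main obstacle is the bookkeeping at the zero-exponent indices: one must verify that clauses (c), (e), (f) together pin down the ``exactly one letter $0$, the other empty'' dichotomy, and that this dichotomy translates, via $\delta(\sigma*\tau)=\delta(\sigma)+\delta(\tau)$, into the clean finite-tail split $\alpha=\alpha^\bullet+n$, $\beta=\beta^\bullet+p$ with $n+p=m$. Everything else is a transcription of Propositions \ref{chsh} and \ref{mixed}.
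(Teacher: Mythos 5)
Your proposal is correct and takes essentially the same route as the paper: part (1) follows because clause (f) of Proposition \ref{mixed} is vacuous when all exponents in the normal form of $\gamma$ are positive, and part (2) follows by observing that the zero exponents are exactly the $m$ trailing ones and then comparing Propositions \ref{chsh} and \ref{mixed} on those positions. The paper compresses your trailing-index bookkeeping (the ``exactly one of $\sigma_i,\sigma'_i$ is the letter $0$'' dichotomy and the translation into $\alpha=\alpha^\bullet+n$, $\beta=\beta^\bullet+p$) into the phrase ``follows easily,'' so your write-up is simply a more detailed version of the same argument.
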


\begin{proof} 
(1) If $\gamma$ is limit, and
$ \sigma ( \gamma )= \eta _h \eta _{h-1}\dots \eta_1 \eta_0  $,
then all the $\eta_i$'s are $>0$, thus Clause (f) in Proposition \ref{mixed}
is automatically satisfied.

(2) If $\gamma= \gamma^\bullet + m$, then 
$\eta_i =0$ exactly for $i=0, \dots, m-1$,
thus  $ \sigma ( \gamma ^\bullet)= \eta _h \eta _{h-1}\dots \eta_m $. 
The conclusion now follows easily from (1) and Propositions \ref{chsh}
and \ref{mixed}. 
\end{proof}  

Notice that the notions of a shifted sum and of a mixed sum are distinct.
Indeed, it follows easily from Proposition \ref{chsh} that 
the smallest shifted sum of $\alpha$ and $\beta$ is $\sup \{ \alpha, \beta \} $.
However, the smallest mixed sum of, say, $ \omega+1$ and 
$ \omega+2$ is $ \omega+ 3 > \sup \{ \omega +1, \omega +2 \} $.    
In general, as a corollary of Proposition \ref{mixed}, we obtain 
a result by Neumer \cite{neumer}:
for $ \alpha = \alpha^\bullet +n $ and    $ \beta =\beta^\bullet+ p$,
where
$ \alpha^\bullet $ and    $\beta^\bullet$ are limit ordinals,
 the smallest 
mixed sum of   $ \alpha$ and    $\beta$
is $\alpha^\bullet + n + p$, if $ \alpha^\bullet =\beta^\bullet$,
and $ \sup \{ \alpha,\beta \}$,
if $ \alpha^\bullet \not=\beta^\bullet$.

\section{Some indispensability arguments and spaces of small cardinality} \labbel{small}

As we mentioned, a discrete space of cardinality $\lambda$ is
not $[ \alpha, \alpha ]$-compact, for every ordinal $\alpha$ of cardinality 
$ \leq \lambda$.
In a more general way, we can exhibit plenty of spaces which behave 
as discrete spaces, that is, for which 
ordinal (in-)compactness reduces to cardinal (in-)compactness.
This is the theme of the first propositions in the present section.
Then we proceed to prove a more sophisticated result, Theorem 
\ref{cork}, which implies that, if we restrict ourselves to spaces of
cardinality $\kappa$, then $[ \alpha, \alpha ]$-compactness is equivalent to
$[ \beta , \beta  ]$-compactness, for a large set of limit ordinals 
$\alpha$ and $\beta$ 
of cardinality $\kappa$. 
 In particular, for countable spaces, Corollary 
\ref{corw} shows that
$[ \alpha, \alpha ]$-compactness  becomes trivial above $ \omega \cdot \omega $. The above mentioned results  imply that  the relatively simple examples introduced in the previous section are really far from exhausting all possible kinds of counterexamples. Indeed, further and more involved counterexamples shall be constructed in the next section. In fact, in the next section we shall prove some equivalences which show that Proposition \ref{transfer}
cannot be improved.

In order to carry on the proof of the next proposition, we need a definition.

\begin{definition} \labbel{indisp}   
If $(O_ \delta ) _{ \delta \in \alpha } $ is a cover of $X$, let us say that 
some $O_ {\bar{\delta}} $ is \emph{indispensable} if and only if  
every subcover of $(O_ \delta ) _{ \delta \in \alpha } $ must contain
$O_ {\bar{\delta}} $. Equivalently,  $O_ {\bar{\delta}} $ is indispensable
if and only if there is $x \in O_ {\bar{\delta}} $ such that 
 $ x \not\in \bigcup  _{ \delta \in \alpha, \delta \not=\bar{\delta}  } O_ \delta  $.
 \end{definition}

For example, if $X$ is a topological space with the discrete topology, and 
$(O_ \delta ) _{ \delta \in \alpha } $ is a cover of $X$
consisting of (all) singletons, then each element of this cover is indispensable.

\begin{proposition} \labbel{ord=card}
Suppose that $\alpha$ is a nonzero ordinal, $\lambda$ is an infinite cardinal, 
and $(X, \tau )$ has some cover $(O_ \delta ) _{ \delta \in \alpha   } $
having at least $\lambda$ indispensable elements.
  \begin{enumerate}   
 \item  
If $| \alpha |= \lambda $, then $X$ is not
$[ \beta , \beta  ]$-\brfrt compact, for every ordinal $ \beta $ 
with $| \beta  |= \lambda $.
\item
If $\tau$ is closed under unions, then
$X$ is not
$[ \beta , \beta  ]$-\brfrt compact, for every nonzero ordinal $ \beta $ 
with $ |\beta | \leq \lambda $.
 \end{enumerate} 
 \end{proposition}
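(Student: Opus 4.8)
The plan is to distill the hypothesis into a family of $\lambda$ points that pin the indispensable sets in place, and then to repackage the given cover as a $\beta$-indexed cover in which those points force every subcover to be ``large''. First I would record the elementary consequence of Definition~\ref{indisp}: for each indispensable $O_{\bar\delta}$ pick a witness $x_{\bar\delta}\in O_{\bar\delta}$ lying in no other member of the cover. If $\bar\delta\neq\bar\delta'$ are both indispensable, their witnesses are distinct, since a common witness would lie in two distinct members. Writing $I\subseteq\alpha$ for the set of indispensable indices, this yields $\lambda$ pairwise distinct points $(x_i)_{i\in I}$, each covered by $O_i$ alone. This is the only feature of the hypothesis I will use.

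For part (2) I would argue directly, exploiting closure under unions to make \emph{every} member of the new cover indispensable. Since $|\beta|\le\lambda=|I|$, fix an injection $\gamma\mapsto i(\gamma)$ of $\beta$ into $I$. Set $P_\gamma=O_{i(\gamma)}$ for $0<\gamma<\beta$, and let $P_0$ be the union of $O_{i(0)}$ with all members $O_\delta$ whose index is not of the form $i(\gamma)$; this $P_0$ lies in $\tau$ exactly because $\tau$ is closed under unions. Then $(P_\gamma)_{\gamma\in\beta}$ covers $X$ (every original set is used), while $x_{i(\gamma)}\in P_\gamma$ yet $x_{i(\gamma)}\notin P_{\gamma'}$ for $\gamma'\neq\gamma$, because the only original set containing $x_{i(\gamma)}$ is $O_{i(\gamma)}$, which was kept apart from the dump. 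Hence each $P_\gamma$ is indispensable, the only subcover is the whole sequence, of order type $\beta\not<\beta$, and $X$ fails to be $[\beta,\beta]$-compact.

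For part (1) closure under unions is unavailable, so instead I would reindex the given cover, placing the indispensable sets at a set of positions of order type $\beta$. Put $N=\alpha\setminus I$, so $|N|\le\lambda$. The heart of the matter is the combinatorial lemma: \emph{for every infinite ordinal $\beta$ with $|\beta|=\lambda$ there is $S\subseteq\beta$ of order type $\beta$ with $|\beta\setminus S|=\lambda$.} Granting this, I would fix such an $S$ together with a surjection $\phi\colon\beta\to\alpha$ that restricts to a bijection of $S$ onto $I$ and maps $\beta\setminus S$ onto $N$ (possible as $|\beta\setminus S|=\lambda\ge|N|$; if $N=\emptyset$ take $S=\beta$). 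Then $(O_{\phi(\gamma)})_{\gamma\in\beta}$ is a cover, and any subcover indexed by $H\subseteq\beta$ must, for each $i\in I$, contain the unique $\gamma$ with $\phi(\gamma)=i$, since $x_i$ is covered only by $O_i$. Thus $H\supseteq S$, so $H$ has order type at least that of $S$, namely $\beta$; no subcover has order type $<\beta$, and $[\beta,\beta]$-compactness fails.

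The main obstacle is the combinatorial lemma, which I would prove via Cantor normal form, reusing the block decomposition of Lemma~\ref{lemshifted}. Writing $\beta=\beta^\bullet+m$ with $\beta^\bullet$ limit and $m<\omega$, the top $m$ points must stay in $S$, so it suffices to thin the leading block $K=[0,\omega^{\eta})$ of $\beta^\bullet$, where $\omega^\eta$ is the largest Cantor term and $|\omega^\eta|=\lambda$. If $\omega^\eta>\lambda$ then, $\omega^\eta$ being additively indecomposable, $\lambda+\omega^\eta=\omega^\eta$, so the final segment $[\lambda,\omega^\eta)$ has order type $\omega^\eta$ with complement $[0,\lambda)$ of cardinality $\lambda$; if instead $\omega^\eta=\lambda$ (equality case, where $\omega^\lambda=\lambda$), split $\lambda$ into two disjoint pieces of cardinality $\lambda$, each necessarily of order type $\lambda$. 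Reassembling $S$ from this thinned leading block, the untouched remaining blocks, and the final $m$ points gives order type $\beta$ and complement of cardinality $\lambda$, as required. The delicate point throughout is that a subset of $\beta$ of the full cardinality $\lambda$ need not have order type $\beta$, which is exactly why the lemma, rather than a bare cardinality count, is needed to carry part (1).
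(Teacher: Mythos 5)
Your proof is correct, and its overall strategy coincides with the paper's: pick a witness point for each indispensable set, reindex the cover by $\beta$ so that the indispensable sets occupy a set of positions of order type $\beta$, and observe that every subcover must retain all of those positions, hence cannot have order type $<\beta$. Where you genuinely diverge is in the combinatorial core of part (1). You isolate the lemma that every infinite $\beta$ with $|\beta|=\lambda$ contains $S$ of order type $\beta$ with $|\beta\setminus S|=\lambda$, and prove it via Cantor normal form, thinning the leading additively indecomposable block (using $\lambda+\omega^\eta=\omega^\eta$ when $\omega^\eta>\lambda$, and a two-piece splitting when $\omega^\eta=\lambda$). The paper gets the same fact with less machinery: since $\lambda\le\beta$, write $\beta=\lambda+\gamma$; place the non-indispensable elements on a set $Z$ inside the initial segment $[0,\lambda)$ chosen with $|\lambda\setminus Z|=\lambda$, and use that any subset of the cardinal $\lambda$ of cardinality $\lambda$ has order type exactly $\lambda$, so that $\beta\setminus Z=(\lambda\setminus Z)\cup[\lambda,\beta)$ has order type $\lambda+\gamma=\beta$. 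The normal-form detour buys nothing here (only the complement's cardinality matters), so the paper's route is shorter; on the other hand, your version states the needed order-type fact explicitly as a reusable lemma, which has some expository value. For part (2) you argue directly (every member of the doctored cover is indispensable, so the unique subcover is the whole $\beta$-sequence), whereas the paper performs the same doctoring --- keep $|\beta|$ indispensable sets and absorb everything else into one of them using closure under unions --- and then quotes part (1); these are the same construction, differently packaged. One tiny slip: the parenthetical claim ``$\omega^\lambda=\lambda$'' in your equality case is false for $\lambda=\omega$ (ordinal exponentiation gives $\omega^\omega>\omega$), but nothing in your argument uses it.
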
  

\begin{proof} 
(1) Let $| \beta  |= \lambda $. Rearrange the sequence $(O_ \delta ) _{ \delta \in \alpha  } $
as $(O'_ \varepsilon  ) _{ \varepsilon  \in \beta  } $
in such a way that, in this latter sequence, the subsequence of the indispensable elements  has order type $ \beta $. This is always possible, since $\lambda$ 
is an infinite cardinal, $| \beta| = \lambda$, and there are $\lambda$-many indispensable elements in the original sequence. For example, 
if $\mu$ is the cardinality of the set of 
 non indispensable
elements (it may happen that $\mu=0$),
choose a subset $Z \subseteq \lambda $
with $| Z| = \mu$ and 
such that $| \lambda \setminus Z| = \lambda $,   
assign to non indispensable elements
only positions in  $ Z$,
and assign all the other positions in $\beta \setminus Z$
to all indispensable elements.

Every subcover of $(O'_ \varepsilon  ) _{ \varepsilon  \in \beta } $
must contain all of its indispensable elements, thus has order type $ \beta $.
This implies that $X$ is not $[ \beta , \beta  ]$-\brfrt compact.

(2) Let
$| \beta  | \leq \lambda $, say
$| \beta  |= \nu $.
Consider a new cover of $X$ obtained by choosing $\nu$-many
indispensable $O_ \delta$'s
and joining all the remaining $O_ \delta$'s into one of them (it is still in $\tau$,
since $\tau$ is closed under unions). If $\nu$ is finite,
then the result is trivial. Otherwise, it
 is obtained by applying  (1), with $\nu$ in place of $\lambda$,  to this new cover.
\end{proof} 

In Section \ref{t1sec}  we shall use arguments similar 
to those used in the proof of
Proposition \ref{ord=card}
 in order to prove results about
compactness properties of
$T_1$ spaces.

Theorem \ref{cork}  below is a far more sophisticated result
than Propositions \ref{ord=card}. Recall
that $+$ and  $\cdot$ denote, respectively,  ordinal sum and product.
Moreover, also exponentiation, if not otherwise specified,
will denote \emph{ordinal exponentiation}.

\begin{lemma} \labbel{nonoltreprod}
Suppose that $\kappa$ is an infinite regular cardinal
and $\alpha$ is an ordinal of the form $ \alpha_1+ \kappa^ \varepsilon $,
for some ordinals $\alpha_1 \geq 0$ and $\varepsilon >1$
such that $\varepsilon$ is either a successor ordinal, or
$\cf \varepsilon = \kappa $.
Suppose further that  $|X|= \kappa $, and that $(X, \tau )$  is not $[\alpha, \alpha]$-\brfrt compact.

Then $(X, \tau )$ is not $[ \alpha'  , \alpha' ]$-\brfrt compact, for every  limit ordinal
$\alpha'  $ of the form $\alpha'= \kappa \cdot \alpha' _1$,
for some $\alpha'_1 > 0$ with
$|\alpha'_1| \leq \kappa  $.

If, in addition, $\tau$ is closed under unions (in particular, if $\tau$ 
is a topology on $X$), then 
$(X, \tau )$ is not $[ \alpha'  , \alpha' ]$-\brfrt compact, for every  ordinal
$\alpha'  $ with  $| \alpha'| \leq \kappa $.
\end{lemma}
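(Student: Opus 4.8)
The plan is to manufacture, for each target $\alpha'$, an explicit witness to $[\alpha',\alpha']$-incompactness out of the given witness at $\alpha$. By Lemma \ref{irredundant} I may fix an \emph{irredundant} cover $(O_\delta)_{\delta<\alpha}$ witnessing non-$[\alpha,\alpha]$-compactness, so that each $O_\delta$ contributes a new point $x_\delta\notin\bigcup_{\rho<\delta}O_\rho$; since $\delta\mapsto x_\delta$ is injective and $|X|=\kappa$, this already forces $|\alpha|=\kappa$, and hence $|\alpha'|=\kappa$ for every first-part target (as $|\kappa\cdot\alpha'_1|=\kappa$ when $|\alpha'_1|\le\kappa$). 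The first thing I would record is that the hypothesis on $\varepsilon$ is exactly what guarantees $\cf(\kappa^\varepsilon)=\kappa$, hence $\cf\alpha=\kappa$: writing $\kappa=\omega^{\zeta_0}$, the ordinal $\kappa^\varepsilon=\omega^{\zeta_0\cdot\varepsilon}$ is additively indecomposable, and in the successor case $\kappa^{\zeta+1}=\kappa^\zeta\cdot\kappa$ while in the case $\cf\varepsilon=\kappa$ one has $\kappa^\varepsilon=\sup_{\zeta<\varepsilon}\kappa^\zeta$ along a $\kappa$-cofinal chain. In both cases $\kappa^\varepsilon$ splits into $\kappa$ consecutive, additively indecomposable blocks $(I_i)_{i<\kappa}$, each of order type a power $\kappa^{\ge 1}\ge\kappa$. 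This uniform block decomposition is the structural engine of the proof, and is exactly what is unavailable when $\cf\varepsilon\neq\kappa$.

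I would then split the argument as the statement does. The second (union-closed) part I expect to follow the pattern of Proposition \ref{ord=card}: I will exhibit a cover of $X$ with $\kappa$ ``separated'' members, then rearrange and \emph{merge} them, invoking the union hypothesis through the last clause of Proposition \ref{transfer}, to reach every $\alpha'$ with $|\alpha'|\le\kappa$ (including successors and non-multiples of $\kappa$). The first (no-union) part is genuinely weaker---only multiples $\kappa\cdot\alpha'_1$ are claimed---and I would prove it by a direct construction keeping the witnessing subfamilies injectively indexed; the restriction to multiples of $\kappa$ reflects that, without union, the only obstructions I can guarantee are ``$\iit$-like'' (that is, $\kappa$-cofinal with no indispensable element), and these force covering order types that are sums of copies of $\kappa$.

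For the extraction, using $\cf\alpha=\kappa$ I fix a continuous cofinal sequence $(\gamma_i)_{i<\kappa}$. For each $i$ the initial segment $(O_\delta)_{\delta<\gamma_i}$ has order type $\gamma_i<\alpha$, so by badness it fails to cover; I choose $z_i\notin\bigcup_{\delta<\gamma_i}O_\delta$. No point can occur as $z_i$ for cofinally many $i$ (it would then be uncovered), so by regularity of $\kappa$ and $|X|=\kappa$ there are $\kappa$ distinct such points, one attached to each block $I_i$. This already reproves non-$[\kappa,\kappa]$-compactness, which is the case $\alpha'_1=1$ (and also drops out of Corollary \ref{initial} with Proposition \ref{simple}, since $|\alpha|=\kappa$). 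For a general multiple $\alpha'=\kappa\cdot\alpha'_1$ with $|\alpha'_1|\le\kappa$, I would distribute the $\kappa$ block-obstructions over the $\alpha'_1$ coordinates of $\alpha'$ and re-lay the cover along $\alpha'$ accordingly (formally via an index map fed into Proposition \ref{transfer}, or by building a length-$\alpha'$ sequence directly). The order-type bookkeeping is precisely the shifted-sum calculus of Lemmas \ref{lemshifted} and \ref{disg}: a subfamily of order type $<\kappa\cdot\alpha'_1$ must be $\kappa$-bounded in all but boundedly many coordinates, hence must omit the obstruction sitting in some coordinate, and therefore cannot cover.

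The hard step is this last assembly: turning $\kappa$-many mutually interfering obstructions \emph{inside one space} into a configuration whose every short subfamily provably fails to cover, and guaranteeing there are enough of them (at least $|\alpha'_1|$) with the right cofinality at the top. This is where $\varepsilon>1$ (to secure $\kappa^\varepsilon\ge\kappa^2$, i.e. room for $\kappa$ blocks each of size $\ge\kappa$), the cofinality hypothesis on $\varepsilon$ (to make $\cf(\kappa^\varepsilon)=\kappa$ and the block decomposition $\kappa$-indexed), and $|X|=\kappa$ (to bound the obstruction points, and in the union case to let merging simulate indispensable elements) all enter together. Two technical points I expect to demand care are handling the limit case $\cf\varepsilon=\kappa$ uniformly with the successor case through the common block decomposition, and carrying out the union-closed part so that merged blocks yield exactly the successor and sub-$\kappa$-cofinal targets that are \emph{not} reachable injectively---this being the structural reason the two parts of the lemma have different conclusions.
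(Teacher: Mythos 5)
Your construction of the obstruction points is one-sided, and that is a genuine gap. You choose $z_i\notin\bigcup_{\delta<\gamma_i}O_\delta$, so each $z_i$ is avoided by all sets with index below $\gamma_i$; but nothing prevents $z_i$ from belonging to sets $O_\delta$ with $\delta$ arbitrarily large. Your final assembly step --- ``a subfamily of order type $<\kappa\cdot\alpha'_1$ \dots must omit the obstruction sitting in some coordinate, and therefore cannot cover'' --- needs exactly the opposite: that each $z_i$ is covered \emph{only} by sets whose indices lie in a bounded interval $[\gamma_i,\gamma_{i+1})$, so that, after the blocks are redistributed along $\alpha'$, covering $z_i$ forces any subfamily to pick up an index in the prescribed coordinate. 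With one-sided obstructions, a short subfamily can omit every set of the block attached to $z_i$ and still cover $z_i$ by a set of large original index that was re-laid in a different coordinate, so the order-type bookkeeping (shifted sums or otherwise) never gets started. The same defect breaks your union-closed part: the merged blocks $U_i=\bigcup\{O_\delta\mid\gamma_i\le\delta<\gamma_{i+1}\}$ satisfy $z_i\notin U_j$ only for $j<i$, hence need not be indispensable, and Proposition \ref{ord=card}(2) does not apply. (They do give that every subcover of $(U_i)_{i<\kappa}$ is cofinal in $\kappa$, which handles $\alpha'=\kappa$, but nothing beyond --- in particular no successor target such as $\kappa+1$, since no single merged set can be forced into every subcover.)

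The missing idea is the Claim the paper proves first: for every $\beta<\alpha$ there exist $x\notin\bigcup_{\delta<\beta}O_\delta$ and $\gamma_x<\alpha$ with $x\notin\bigcup_{\delta\ge\gamma_x}O_\delta$, i.e.\ points whose coverage is localized from above as well as from below. Its proof is where the hypotheses you list really combine: if it failed, every point outside some initial segment would be covered arbitrarily late, and then --- using $|X|=\kappa$, regularity of $\kappa$, $\cf\alpha=\kappa$, and the fact that every final segment of $\alpha$ has order type $\kappa^\varepsilon>\kappa$ (this is where $\varepsilon>1$ enters) --- one could assemble a subcover of order type at most $\beta+\kappa<\alpha$, contradicting the badness of the cover. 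Note also that the paper's block endpoints are produced adaptively by this Claim (each $\gamma_{\xi+1}$ is dictated by where the chosen point's coverage ends), whereas you fix a cofinal block decomposition in advance; even granting the Claim, a pre-chosen decomposition will not in general localize your points. Once the two-sided localization is in hand, your redistribution plan for $\alpha'=\kappa\cdot\alpha'_1$ and the merging-into-indispensable-elements plan for the union case both go through essentially as the paper carries them out; without it, neither does.
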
  

\begin{proof} 
 Suppose that $(O_ \delta ) _{ \delta \in \alpha} $ is a counterexample 
to $[\alpha, \alpha]$-\brfrt compactness.
In particular, for every $\beta < \alpha$, we have
$ \bigcup _{ \delta < \beta } O_ \delta \subset X $ properly.

We shall show  a little more.
 
\begin{claim} 
  For every $\beta < \alpha$, there are  
$x \in X \setminus \bigcup _{ \delta < \beta } O_ \delta $
and $ \gamma  _x < \alpha$ such that  
 $x \not \in\bigcup _{ \delta \geq \gamma_x  } O_ \delta $
(hence, 
$x \in \bigcup _{ \beta \leq \delta <\gamma _x } O_ \delta $,
since $(O_ \delta ) _{ \delta \in \alpha} $  is a cover of $X$).
\end{claim}   
 
\begin{proof}[Proof of the Claim]
Suppose by contradiction that the statement in the claim  fails. Then, for some given  $\beta < \alpha$, we have that, 
for every $x \in X \setminus \bigcup _{ \delta < \beta } O_ \delta $,
there are arbitrarily large  indexes $\delta < \alpha$
such that $x \in O_ \delta $. Fix some $\beta$
as above, and enumerate the elements in
  $X \setminus \bigcup _{ \delta < \beta } O_ \delta$ as
$(x_ \gamma ) _{ \gamma \in \kappa '} $, with 
$\kappa' \leq \kappa $ (here we are using the assumption that $|X| \leq \kappa $). 
 
We shall define by transfinite induction a strictly increasing sequence 
$( \delta _ \gamma ) _{ \gamma \in \kappa '} $ such that 
$x_ \gamma  \in O _{ \delta _ \gamma } $, for every $ \gamma \in \kappa '$.
 First, choose some  $\delta_0 < \alpha$ 
 such that $x_0 \in O _{ \delta _0} $. 

Suppose that $\gamma < \kappa '$, and that 
 $( \delta _{\gamma '}) _{ \gamma' < \gamma } $
have already been defined. 
Notice that, by the assumption on $\varepsilon$,
the cofinality of 
$ \alpha = \alpha_1+ \kappa^ \varepsilon $
is $\kappa$.
Since $\gamma < \kappa ' \leq  \kappa $,
and  $\kappa$ is regular,
then 
 $ \sup _{ \gamma' < \gamma } \delta _{\gamma '} < \alpha $.
Hence,  by the
first paragraph in the proof, there is some $\delta_ \gamma > \sup _{ \gamma' < \gamma } \delta _{\gamma '}$ such that  $x_ \gamma  \in O _{ \delta _ \gamma } $.

Notice that 
$\{   \delta _ \gamma  \mid  \gamma \in \kappa ' \}$
has order type $ \kappa ' \leq \kappa$.  
Hence, if we put 
$D = [0, \beta ) \cup \{   \delta _ \gamma  \mid  \gamma \in \kappa ' \} $,
then $D$ has order type $ \leq \beta + \kappa '$.
Notice that $ \beta + \kappa '< \alpha $, since  $\alpha $
is of the form $  \alpha_1+ \kappa^ \varepsilon $
with $\varepsilon > 1$, hence each final subset   of $\alpha$  has
 order type $\kappa^ \varepsilon > \kappa $.

However, by construction,
$\bigcup _{ \delta \in  D } O_ \delta =X$,
hence we have found 
a subcover of 
$(O_ \delta ) _{ \delta \in \alpha} $
of order type $<\alpha$, and this contradicts
the assumption that 
$(O_ \delta ) _{ \delta \in \alpha} $ witnesses 
the failure of  $[\alpha, \alpha]$-\brfrt compactness
of $X$.

We have reached a contradiction, thus the claim is proved.
\qedhere$_ {Claim}$ 
\end{proof}

\emph{Proof of Lemma \ref{nonoltreprod} (continued)}
Now we are going to construct 
by transfinite induction two sequences 
$(x_ \xi) _{ \xi \in \alpha ''} $ 
and 
$( \gamma _ \xi) _{ \xi \in \alpha ''} $,
 for some ordinal 
$ \alpha '' \leq \alpha $, such that  
\begin{enumerate} 
\item
$x_ \xi$ belongs to $X$, for every $ \xi < \alpha ''$,
\item 
$ \gamma _{\xi'} < \gamma _ \xi<\alpha$, for every $ \xi' <\xi < \alpha ''$,
\item
$ \gamma  _0 =0$, $( \gamma _ \xi) _{ \xi \in \alpha ''} $
is continuous, and $\sup_{ \xi \in \alpha ''}  \gamma _ \xi = \alpha  $,
\item
$x_ \xi \in 
\bigcup \{ O_ \delta \mid  \gamma _ \xi \leq \delta < \gamma _{ \xi + 1}   \} $, for every $ \xi < \alpha ''$,
\item
$x_ \xi \not\in 
\bigcup \{ O_ \delta \mid  \delta \in [0, \gamma _ \xi) \cup  [\gamma _{ \xi + 1}  \alpha ) \}, $ for every $ \xi < \alpha ''$.
 \end{enumerate}   
 
Put $ \gamma  _0 =0$. 
By applying the claim to $\beta= \gamma _0=0$,
we  get $x_0 \in X $
and $ \gamma  _1 < \alpha$ such that  
$x_0  \in\bigcup _{ \delta <\gamma_1} O_ \delta $ and 
$x_0 \not \in\bigcup _{ \delta \geq \gamma_1} O_ \delta $.
 
Suppose that 
$x _{ \xi } $ 
and 
$ \gamma _{\xi+1}  $
have been already defined, for some 
$\xi$. Apply the claim
to $ \beta = \gamma _{\xi+1} $,
in order to obtain   
$x_{\xi+1} $
and $ \gamma  _{\xi+2} < \alpha $.

Now suppose that  $\xi$ is a limit ordinal, and that 
$x _{ \xi '} $ 
and 
$ \gamma _{\xi'}  $
have already been  defined, for all 
$\xi' < \xi$.
If  $\sup_{ \xi' < \xi }  \gamma _{\xi'}  = \alpha  $,
take $\alpha''= \xi$, and terminate the induction.
Otherwise, let 
$ \gamma _ \xi = \sup_{ \xi' < \xi }  \gamma _{\xi'} $.
Then
apply the claim
with 
$\beta =  \gamma _{\xi} $,
in order to obtain   
$x_\xi $
and $ \gamma  _{ \xi+1} $.

It is immediate to show that the sequences constructed in such a way
satisfy (1)-(5) above.

Notice that, since $|X|= \kappa $, and $X$  is not $[\alpha, \alpha]$-\brfrt compact, then
necessarily $|\alpha | \leq \kappa $.
On the other hand,  
$\alpha \geq \kappa $, since   $ \alpha = \alpha_1+ \kappa^ \varepsilon $,
for $\varepsilon >1$.   Hence $|\alpha | =\kappa $.
Moreover, by (2) and (3), and since  
$ \cf\alpha  = \kappa $,
we also get $ \cf\alpha''  = \kappa $, thus 
$|\alpha'' | =\kappa $, since $ \alpha'' \leq \alpha  $.

If we assume that $\tau$ 
is closed under unions, then the 
proof can be concluded in a rather simple way.
Indeed, by letting 
$U_ \xi =
\bigcup \{ O_ \delta \mid  \gamma _ \xi \leq \delta < \gamma _{ \xi + 1}   \} $,
for $\xi < \alpha ''$,
we have that 
$x_ \xi \in U _ \eta $
if and only if $\xi =\eta$.
Thus
$(U_ \xi) _{\xi < \alpha ''}$
is a cover,
by (3), and 
since 
$(O_ \delta ) _{ \delta \in \alpha} $ is a cover.
Moreover, $(U_ \xi) _{\xi < \alpha ''}$
consists of $|\alpha'' | =\kappa $
indispensable elements,
hence we are done
 by Proposition \ref{ord=card}(2).

It remains to prove the theorem
without the assumption that $\tau$ is closed 
under unions, and this involves some technical computations.
Hence, suppose that $\alpha'= \kappa \cdot \alpha' _1$,
for some $\alpha'_1 > 0$ with
$|\alpha'_1| \leq \kappa  $.

Partition    
$\alpha''$ into  
$ \alpha' _1$-many classes
$(Z_ \eta ) _{ \eta <\alpha' _1} $,
in such a way that   
$|Z_ \eta | = \kappa $, for every $  \eta <\alpha' _1 $.
This is possible, since
$| \alpha ''|= \kappa $,
and  
$ |\alpha' _1| \leq \kappa $.
For $  \eta < \alpha' _1 $,
put 
$I _ \eta = [ \kappa \cdot \eta , \kappa \cdot ( \eta +1) )$,
and
$W_ \eta= \bigcup _{ \xi \in Z_ \eta  } [ \gamma _ \xi, \gamma _{\xi +1} )$.
Notice that
$| W _ \eta |= \kappa $, for every $\eta <   \alpha' _1$.
For each $\eta$,  
 let $f_ \eta $ be a  bijection from 
$I _ \eta $ onto
 $W_ \eta$.
Notice that 
$\alpha' = \bigcup _{ \eta < \alpha' _1  } I _ \eta $,
and that each $I _ \eta $ has order type $\kappa$. 
Rearrange the original cover 
$(O_ \delta ) _{ \delta \in \alpha} $
as 
$(O'_ \zeta  ) _{ \zeta  \in \alpha'} $
according to the following rule.

If $\zeta \in \alpha' $, then
  $ \zeta \in I _ \eta $,
for some unique $ \eta <\alpha' _1$; then
put $O' _\zeta = O _{ f_ \eta( \zeta ) } $. 

We shall show that 
$(O'_ \zeta  ) _{ \zeta  \in \alpha'} $
witnesses 
$[ \alpha ' , \alpha '  ]$-\brfrt incompactness of $X$.
Indeed, 
since $(Z_ \eta ) _{ \eta <\alpha' _1} $ is a partition of $\alpha''$, 
then,
by Condition (3) above, and by the definition of 
the
 $W_ \eta$'s,
we get that 
$ \bigcupdot _{ \eta < \alpha '_1} W_ \eta = \alpha  $.
Since each $f_\eta$ is a bijection,
and $\alpha' = \bigcupdot _{ \eta < \alpha '_1} I_ \eta  $,
we get that
$(O'_ \zeta  ) _{ \zeta  \in \alpha'} $
is actually a rearrangement of  $(O_ \delta ) _{ \delta \in \alpha} $,
thus it is still a cover of $X$.

Let $Y \subseteq \alpha'$, and suppose that  
$(O'_ \zeta  ) _{ \zeta  \in Y} $ is a cover of $X$. We have to show that
$Y$ has order type $\alpha'$. It is enough to  show that,
for every $\eta <\alpha' _1$, 
$ |Y \cap I _ \eta | = \kappa $, thus
$ Y \cap I _ \eta $
and
$ I _ \eta $
have the same order type ($= \kappa $).
Hence $Y$ and $\alpha'$  have the same order type,
since $ \alpha '= \bigcup _{ \eta  < \alpha '_1}  I _ \eta $.

So, fix $\eta  < \alpha '_1$. 
For every 
$  \xi \in Z_ \eta $,
 by Condition (5) above,
we have that $x_ \xi \not\in O_ \delta $,
for every $\delta \in [0, \gamma _ \xi) \cup  [\gamma _{ \xi + 1},  \alpha )$.
Since 
$(O'_ \zeta  ) _{ \zeta  \in Y} $ is a cover,
there is 
$\zeta  \in Y$ such that 
$x _ \xi \in O'_ \zeta   $.
Necessarily,
$O' _\zeta = O _ \delta  $,
for some $\delta \in [ \gamma _ \xi, \gamma _{ \xi+ 1} )$, thus
$ \delta \in W_ \eta $, hence 
$\delta= f_ \eta( \zeta )$,
and   $ \zeta \in Y \cap I _ \eta $.
By construction, 
$  |Z_ \eta| = \kappa  $.
Since, for 
$\xi \not= \xi'$,
the intervals 
$[ \gamma _ \xi, \gamma _{\xi+1} )$  
and
$[ \gamma _{\xi'} , \gamma _{\xi'+1} )$ 
are disjoint, then, for each 
$  \xi \in Z_ \eta $,
we get a distinct 
$\delta \in \alpha $, hence a distinct 
 $ \zeta \in Y \cap I _ \eta $,
thus $ | Y \cap I _ \eta| = \kappa $.
 
Since the above argument works for each 
$\eta  < \alpha '_1$, we get that
$(O'_ \zeta  ) _{ \zeta  \in \alpha'} $
is indeed a counterexample to 
$[ \alpha ' , \alpha '  ]$-\brfrt compactness.
\end{proof}

\begin{example} \labbel{exkk}
If $\tau$ is not supposed to be closed under unions, the conclusion in the second
statement in Lemma \ref{nonoltreprod} might fail.

Indeed, let $\kappa $ be an infinite regular cardinals, 
let $X= \kappa \cdot \kappa   $,
and let $\tau$ consist of the sets of the form
$[\kappa \cdot \gamma , \kappa \cdot \gamma  + \delta ]$, 
for $\gamma, \delta  < \lambda $.   
Then $(X, \tau )$ is trivially not 
 $[ \kappa \cdot \kappa  , \kappa \cdot \kappa    ]$-\brfrt compact,
but it is $[ \kappa +1 , \kappa +1 ]$-\brfrt compact,
since any cover of $X$ always remains a cover if we take off
any single member of the cover. 

Actually, if $|\alpha|= \kappa $,
then $X$ is $[\alpha, \alpha ]$-compact if and only if  
$\alpha$ has not the form $\kappa \cdot \alpha_1 $, for some ordinal
$\alpha_1$.   

The example also shows that the assumption that $\tau$ 
is closed under unions is necessary in Condition (5)
in Theorem \ref{cork} below, as well as in Condition (4) 
in Corollary \ref{corw}. 
 \end{example}

As a consequence of Lemma \ref{nonoltreprod},
for spaces of cardinality $\kappa$,
the theory of 
$[ \alpha  , \alpha ]$-\brfrt compactness
becomes trivial on a large class of limit ordinals, 
as explicitly stated in the next Theorem.  
More strikingly,
for countable spaces, the theory of
$[ \alpha  , \alpha ]$-\brfrt compactness
is nontrivial only for ordinals
$ \leq \omega \cdot \omega $ (Corollary \ref{corw} below).

\begin{theorem} \labbel{cork}
If $\kappa$ is an infinite regular cardinal and $|X|= \kappa $, 
then the following conditions are equivalent.
\begin{enumerate} 
\item
 $X$  is  $[\kappa \cdot \kappa, \kappa \cdot \kappa]$-\brfrt compact.
\item 
$X$ is   $[ \alpha  , \alpha ]$-\brfrt compact, for some   limit ordinal $\alpha$ 
 of the form  $ \alpha = \kappa \cdot \alpha _1$,
for some $\alpha_1 > 0$ with $ |\alpha _1| \leq \kappa$. 
\item 
$X$ is   $[ \alpha  , \alpha ]$-\brfrt compact, for  every   ordinal
$\alpha $ of the form $\alpha_1 + \kappa ^ \varepsilon $, 
for some ordinals $\alpha_1 \geq 0$ and  $\varepsilon >1$
such that $\varepsilon$ is either a successor ordinal, or
$\cf \varepsilon = \kappa $.
\item 
$X$ is   $[ \alpha  , \alpha + \kappa \cdot \omega )$-\brfrt compact, for  every   ordinal
$\alpha $ of the form $\alpha_1 + \kappa ^ \varepsilon $, 
for some ordinals $\alpha_1 \geq 0$ and  $\varepsilon >1$
such that $\varepsilon$ is either a successor ordinal, or
$\cf \varepsilon = \kappa $.
\end{enumerate}
If $\tau$ is closed under unions, then the preceding conditions are also equivalent to:  
  \begin{enumerate} 
\item[(5)]
$X$ is   $[ \alpha  , \alpha ]$-\brfrt compact, for some  nonzero  ordinal $\alpha$ 
 such that  $| \alpha | \leq\kappa $. 
  \end{enumerate} 
 \end{theorem}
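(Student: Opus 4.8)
The plan is to run the numbered conditions as the cycle $(3)\Rightarrow(1)\Rightarrow(2)\Rightarrow(3)$, to establish $(3)\Leftrightarrow(4)$ separately, and to fold in (5) at the end under the union-closure hypothesis. The entire argument rests on the already-proved Lemma \ref{nonoltreprod} together with Corollary \ref{cortransfer}(3); no genuinely new construction is needed, so the work is essentially bookkeeping on the special shapes of the ordinals involved.

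First I would record that $\kappa\cdot\kappa=\kappa^{2}$ (ordinal exponentiation) fits the template of (3): take $\alpha_1=0$ and $\varepsilon=2$, a successor $>1$. Hence $(3)\Rightarrow(1)$ is just an instance. For $(1)\Rightarrow(2)$, note that $\kappa\cdot\kappa$ is itself a limit ordinal of the form $\kappa\cdot\alpha_1$ with $\alpha_1=\kappa$ and $|\alpha_1|=\kappa\le\kappa$, so (1) supplies a witness for the existential statement (2). The crux is $(2)\Rightarrow(3)$, which I would prove by contraposition: if (3) fails there is some $\alpha$ of the form $\alpha_1+\kappa^{\varepsilon}$ (with $\varepsilon$ a successor or $\cf\varepsilon=\kappa$) for which $X$ is not $[\alpha,\alpha]$-\brfrt compact, and then the first conclusion of Lemma \ref{nonoltreprod} yields that $X$ is not $[\alpha',\alpha']$-\brfrt compact for \emph{every} limit ordinal $\alpha'=\kappa\cdot\alpha'_1$ with $\alpha'_1>0$ and $|\alpha'_1|\le\kappa$, which is precisely the negation of (2).

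For the half-open condition, $(4)\Rightarrow(3)$ is immediate from the definition of $[\alpha,\alpha+\kappa\cdot\omega)$-\brfrt compactness: taking the index type to be $\alpha$ itself (which is $<\alpha+\kappa\cdot\omega$) gives $[\alpha,\alpha]$-\brfrt compactness. For $(3)\Rightarrow(4)$ I would invoke Corollary \ref{cortransfer}(3) with $\beta=\alpha$ for each $\alpha$ of the stated form; the point to verify is that $\cf\alpha=\kappa$, which holds because $\kappa^{\varepsilon}$ has cofinality $\kappa$ whenever $\varepsilon$ is a successor (as $\kappa^{\zeta+1}=\kappa^{\zeta}\cdot\kappa$) or $\cf\varepsilon=\kappa$, and prepending $\alpha_1$ on the left does not change the cofinality of the limit ordinal $\kappa^{\varepsilon}$. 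With $\lambda=\cf\alpha=\kappa$, Corollary \ref{cortransfer}(3) converts $[\alpha,\alpha]$-\brfrt compactness into exactly $[\alpha,\alpha+\kappa\cdot\omega)$-\brfrt compactness.

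Finally, assuming $\tau$ is closed under unions, $(1)\Rightarrow(5)$ is trivial, since $\alpha=\kappa\cdot\kappa$ is a nonzero ordinal with $|\alpha|\le\kappa$. For $(5)\Rightarrow(3)$ I would again argue contrapositively, but now invoking the \emph{second}, stronger conclusion of Lemma \ref{nonoltreprod}: if (3) fails then $X$ is not $[\alpha',\alpha']$-\brfrt compact for every ordinal $\alpha'$ with $|\alpha'|\le\kappa$, contradicting the existence asserted in (5). Since $(3)\Rightarrow(1)$ is already in hand, this closes the equivalence. I do not expect a serious obstacle here: the heavy lifting lives entirely in Lemma \ref{nonoltreprod}, and the only delicate points in the present proof are the cofinality computation above and the verification that the ordinals quantified in the two conclusions of that lemma match exactly those quantified in (2) and in (5).
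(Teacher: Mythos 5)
Your proposal is correct and follows essentially the same route as the paper's proof: the two hard implications, $(2)\Rightarrow(3)$ and $(5)\Rightarrow(3)$, are delegated to the two conclusions of Lemma \ref{nonoltreprod}, the step $(3)\Rightarrow(4)$ comes from Corollary \ref{cortransfer}(3) via the cofinality computation $\cf(\alpha_1+\kappa^{\varepsilon})=\kappa$, and the remaining arrows are trivial instantiations (the paper closes the cycle as $(4)\Rightarrow(1)$ with $\alpha_1=0$, $\varepsilon=2$, whereas you close it as $(3)\Rightarrow(1)$ with the same instantiation). The only difference is this cosmetic rearrangement of the implication cycle, plus your making explicit the cofinality verification that the paper leaves implicit inside the proof of Lemma \ref{nonoltreprod}.
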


 \begin{proof} 
 (1) $\Rightarrow $  (2)  and  (1) $\Rightarrow $  (5) are trivial.

(2) $\Rightarrow $ (3) and, for $\tau$ closed under unions, (5) $\Rightarrow $  (3) follow from Lemma \ref{nonoltreprod}. 

(3) $\Rightarrow $  (4) is from Corollary \ref{cortransfer}(3).

(4) $\Rightarrow $  (1) is immediate from Proposition \ref{simple}(1),
with $\alpha_1=0$ and $\varepsilon=2$.  
\end{proof}  

\begin{corollary} \labbel{corkk}
Suppose that $\kappa$ is an infinite regular cardinal, $|X|= \kappa $,
and let $A$ be the set of all ordinals $\alpha < \kappa ^+$ of  the form  
$ \kappa \cdot ( \alpha ^\bullet + n)$,
with 
$\cf \alpha ^\bullet = \kappa $
and $ 0 \leq n< \omega$.

Then
$X$ is   $[ \alpha  , \alpha ]$-\brfrt compact,
for some $\alpha \in A$, if and only if
$X$ is   $[ \alpha  , \alpha ]$-\brfrt compact,
for all $\alpha \in A$.
 \end{corollary}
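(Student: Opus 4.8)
The plan is to show that each of the two conditions in the statement is equivalent to condition (1) of Theorem \ref{cork}, namely $[\kappa\cdot\kappa,\kappa\cdot\kappa]$-compactness; the corollary then follows at once. Since $A$ is nonempty (for instance $\kappa\cdot\kappa=\kappa^2\in A$), it is enough to prove that $X$ is $[\alpha,\alpha]$-compact for \emph{some} $\alpha\in A$ if and only if it is $[\kappa\cdot\kappa,\kappa\cdot\kappa]$-compact, and likewise for \emph{all} $\alpha\in A$.

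First I would dispose of the easy half. Given $\alpha=\kappa\cdot(\alpha^\bullet+n)\in A$, put $\alpha_1=\alpha^\bullet+n$, so that $\alpha=\kappa\cdot\alpha_1$ is a limit ordinal (as $\kappa$ is infinite) with $\alpha_1>0$ and $|\alpha_1|\leq\kappa$ (because $\alpha<\kappa^+$). Hence any such $\alpha$ is of the shape occurring in condition (2) of Theorem \ref{cork}, so $[\alpha,\alpha]$-compactness for a single $\alpha\in A$ immediately yields condition (2), and therefore condition (1), of that theorem.

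For the reverse direction I would assume condition (1), so that conditions (3) and (4) of Theorem \ref{cork} are also available, and fix an arbitrary $\alpha=\kappa\cdot\alpha^\bullet+\kappa\cdot n\in A$ with $\cf\alpha^\bullet=\kappa$. The crux is to check that $\kappa\cdot\alpha^\bullet$ lies in the family to which condition (4) applies, i.e.\ that $\kappa\cdot\alpha^\bullet=\alpha_1'+\kappa^\varepsilon$ for some $\alpha_1'\geq 0$ and some $\varepsilon>1$ which is either a successor or satisfies $\cf\varepsilon=\kappa$. Granting this, condition (4) gives that $X$ is $[\kappa\cdot\alpha^\bullet,\ \kappa\cdot\alpha^\bullet+\kappa\cdot\omega)$-compact; since $\alpha=\kappa\cdot\alpha^\bullet+\kappa\cdot n$ with $\kappa\cdot n<\kappa\cdot\omega$, Proposition \ref{simple}(4) then yields $[\alpha,\alpha]$-compactness, as wanted.

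The one nonroutine step is this arithmetical claim, and it is where I expect the real work to lie. I would expand $\alpha^\bullet$ in Cantor normal form to base $\kappa$, say $\alpha^\bullet=\kappa^{\zeta_m}c_m+\dots+\kappa^{\zeta_0}c_0$ with $\zeta_m>\dots>\zeta_0$ and $0<c_i<\kappa$, and multiply on the left by $\kappa$; using $\kappa\cdot\kappa^\zeta=\kappa^{1+\zeta}$ and left distributivity this gives $\kappa\cdot\alpha^\bullet=\kappa^{1+\zeta_m}c_m+\dots+\kappa^{1+\zeta_0}c_0$. The hypothesis $\cf\alpha^\bullet=\kappa$, together with the regularity of $\kappa$, forces $\zeta_0\geq 1$, forces the bottom coefficient $c_0$ to be a successor (otherwise $\cf\alpha^\bullet$ would be $1$ or some $\cf c_0<\kappa$), and forces $\kappa^{\zeta_0}$, hence $\kappa^{1+\zeta_0}$, to have cofinality $\kappa$; the last of these is exactly the assertion that $\varepsilon:=1+\zeta_0>1$ is a successor or has $\cf\varepsilon=\kappa$. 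Peeling one copy of $\kappa^{1+\zeta_0}$ off the bottom term then displays $\kappa\cdot\alpha^\bullet$ in the desired form $\alpha_1'+\kappa^\varepsilon$. Combining the two halves, both conditions in the corollary are equivalent to condition (1) of Theorem \ref{cork}, and hence to one another.
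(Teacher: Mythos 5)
Your proposal is correct and takes essentially the same route as the paper: both directions reduce to Theorem \ref{cork}, using its clause (2) to get from compactness at a single $\alpha \in A$ to all the equivalent conditions, and clause (4) together with Proposition \ref{simple} to recover $[\alpha,\alpha]$-compactness for an arbitrary $\alpha \in A$. The only difference is in the ordinal-arithmetic step: where the paper factors $\alpha^\bullet = \kappa\cdot\alpha^*$ (with $\alpha^*$ a successor or of cofinality $\kappa$) and asserts the decomposition $\alpha = \alpha_1 + \kappa^\varepsilon + \kappa\cdot n$, you verify the same decomposition explicitly via base-$\kappa$ Cantor normal form, which is a more detailed but equivalent computation.
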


 \begin{proof} 
Suppose that $X$ is   $[ \alpha'  , \alpha' ]$-\brfrt compact,
for some $\alpha' \in A$. Since  $\alpha'$  
is of the form given in Clause \ref{cork}(2),
then all the equivalent conditions in Theorem \ref{cork} hold.

Now let $\alpha = \kappa \cdot ( \alpha ^\bullet + n) \in A$ be arbitrary. 
Since $\cf \alpha ^\bullet = \kappa $,
then $\alpha ^\bullet = \kappa \cdot \alpha ^*$,
where $\alpha^*$ is either successor or has 
cofinality $\kappa$ itself. In both cases,
$\alpha= \kappa \cdot ( \alpha ^\bullet + n)$ is of the form
$ \alpha _1 + \kappa ^ \varepsilon + \kappa \cdot n$,
with $\varepsilon > 1$  either 
successor, or of cofinality $\kappa$.
Thus, $X$ is   $[ \alpha  , \alpha]$-\brfrt compact,
 in force of Clause
\ref{cork}(4) and of 
Proposition \ref{simple}(1). 
\end{proof}

\begin{corollary} \labbel{corw}
If $|X|= \omega  $, 
then the following conditions are equivalent.
\begin{enumerate} 
\item
 $X$  is  $[ \omega  \cdot \omega , \omega  \cdot \omega ]$-\brfrt compact.
\item 
$X$ is   $[ \alpha  , \alpha ]$-\brfrt compact, for some countable  limit ordinal $\alpha$. 
\item
 $X$  is  $[ \omega  \cdot \omega , \infty) $-\brfrt compact.
\end{enumerate}
If $\tau$ is closed under unions, then the preceding conditions are also equivalent to:
  \begin{enumerate} 
\item[(4)]
$X$ is   $[ \alpha  , \alpha ]$-\brfrt compact, for some  nonzero ordinal $\alpha< \omega _1$.
  \end{enumerate} 
 \end{corollary}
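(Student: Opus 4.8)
The plan is to read off almost everything from Theorem \ref{cork} specialized to $\kappa=\omega$, the only genuinely new content being the equivalence with clause (3). First I would record two elementary ordinal-arithmetic translations. By the division algorithm every limit ordinal is left-divisible by $\omega$, so the countable limit ordinals are precisely the ordinals $\omega\cdot\alpha_1$ with $\alpha_1$ a nonzero countable ordinal; hence clause (2) here is verbatim clause (2) of Theorem \ref{cork} with $\kappa=\omega$, and (1)$\iff$(2) is immediate. Likewise, when $\tau$ is closed under unions, the nonzero ordinals $\alpha<\omega_1$ are exactly the nonzero $\alpha$ with $|\alpha|\le\omega$, so clause (4) here is clause (5) of Theorem \ref{cork}, giving (1)$\iff$(4). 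Finally (3)$\Rightarrow$(1) is trivial, since $[\omega\cdot\omega,\infty)$-compactness includes $[\omega\cdot\omega,\omega\cdot\omega]$-compactness by Definition \ref{ordcpn}.

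The substance is (1)$\Rightarrow$(3). I would first settle covers indexed by countable ordinals. Assuming (1), Theorem \ref{cork} yields its clause (4): $X$ is $[\alpha,\alpha+\omega\cdot\omega)$-compact for every $\alpha$ of the form $\alpha_1+\omega^\varepsilon$ with $\varepsilon>1$ a successor or of cofinality $\omega$. Given an arbitrary countable $\gamma\ge\omega\cdot\omega$, I would isolate in its additive normal form the least term $\omega^\varepsilon$ with exponent $\varepsilon\ge2$ (which exists since $\gamma\ge\omega^2$), writing $\gamma=\alpha_1+\omega^\varepsilon+\rho$, where $\alpha_1$ collects the preceding terms and $\rho$ the following ones; as every remaining exponent is $\le1$, we have $\rho<\omega\cdot\omega$. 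Since $\gamma$ is countable, $\varepsilon$ is a countable ordinal $\ge2$, hence a successor or of cofinality $\omega$, so $\alpha_1+\omega^\varepsilon$ meets the hypothesis of Theorem \ref{cork}(4). Because $\alpha_1+\omega^\varepsilon\le\gamma<(\alpha_1+\omega^\varepsilon)+\omega\cdot\omega$, that clause gives $[\alpha_1+\omega^\varepsilon,\gamma]$-compactness, whence $[\gamma,\gamma]$-compactness by Proposition \ref{simple}(1). Thus $X$ is $[\gamma,\gamma]$-compact for every countable $\gamma\ge\omega\cdot\omega$, i.e.\ $[\omega\cdot\omega,\omega_1)$-compact.

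It remains to pass from countable index sets to arbitrary ones, and this is the step I expect to be the real obstacle, since the argument above breaks for uncountable $\gamma$: the exponent $\varepsilon$ can then have uncountable cofinality, and Theorem \ref{cork}(4) no longer applies. I would circumvent this by a cardinality reduction through Lemma \ref{irredundant}. In any irredundant cover $(O_\delta)_{\delta\in\alpha^*}$ of $X$ one may choose, for each $\delta$, a point $x_\delta\in O_\delta\setminus\bigcup_{\varepsilon<\delta}O_\varepsilon$; if $\delta<\delta'$ then $x_{\delta'}\notin O_\delta\ni x_\delta$, so $\delta\mapsto x_\delta$ is injective and $|\alpha^*|\le|X|=\omega$, forcing $\alpha^*<\omega_1$. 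Consequently, for any ordinal $\alpha\ge\omega\cdot\omega$, Lemma \ref{irredundant} reduces $[\omega\cdot\omega,\alpha]$-compactness to irredundant covers, all of which have countable length $\alpha^*$: if $\alpha^*<\omega\cdot\omega$ the cover is itself a subcover of order type $<\omega\cdot\omega$, while if $\omega\cdot\omega\le\alpha^*<\omega_1$ the $[\omega\cdot\omega,\omega_1)$-compactness just established supplies a subcover of order type $<\omega\cdot\omega$. Hence $X$ is $[\omega\cdot\omega,\alpha]$-compact for every $\alpha\ge\omega\cdot\omega$, that is, $[\omega\cdot\omega,\infty)$-compact, which is (3).
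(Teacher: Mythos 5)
Your proof is correct, and its skeleton is the same as the paper's: the equivalences (1)$\Leftrightarrow$(2) and (1)$\Leftrightarrow$(4) are read off from Theorem \ref{cork} (clauses (1), (2) and (5) there), (3)$\Rightarrow$(1) is trivial, and the countable part of (1)$\Rightarrow$(3) is obtained exactly as in the paper from Theorem \ref{cork}(4) together with Proposition \ref{simple} --- your additive-normal-form bookkeeping just makes explicit what the paper compresses into the phrase ``that is, for every countable ordinal $\delta \geq \omega\cdot\omega$''. The one place you genuinely diverge is the passage to covers indexed by uncountable ordinals. The paper observes that a countable space is \emph{trivially} $[\delta,\delta]$-compact for every uncountable $\delta$ (choosing one set per point, any cover of $X$ has a countable subcover, hence one of order type $<\omega_1 \leq \delta$), and then assembles the countable and uncountable cases into $[\omega\cdot\omega,\infty)$-compactness via the slicing equivalence of Proposition \ref{simple}(2). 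You instead invoke Lemma \ref{irredundant}: an irredundant cover of a countable space has countable length (your injection $\delta \mapsto x_\delta$ is sound), so every cover reduces to one of countable order type, to which the already-established countable case applies. Both arguments exploit $|X|=\omega$ in a one-step way; yours has the small advantage of producing the order-type-$<\omega\cdot\omega$ subcover directly, without routing through the transfinite induction hidden in Proposition \ref{simple}(2), while the paper's dispatch of the uncountable case is shorter to state.
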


 \begin{proof} 
The equivalence of  (1), (2),  and  (4) is a particular case of 
Theorem  \ref{cork} (Conditions (1), (2) and (5) there). 

(3) $\Rightarrow $  (1) is immediate from Proposition \ref{simple}.

In order to finish the proof, suppose that (2) holds. Then, by 
Theorem  \ref{cork} (2) $\Rightarrow $  (4), $X$ is
 $[ \delta  , \delta ]$-\brfrt compact, for every ordinal $\delta$ 
of the form $ \alpha _1 + \omega ^ \varepsilon + m$, for $\varepsilon>1$,
that is, for every countable 
 ordinal 
$\delta \geq \omega \cdot \omega $.
Since $X$, being countable,  is trivially  $[ \delta  , \delta ]$-\brfrt compact
for every uncountable $\delta$, we get  
 $[ \omega \cdot \omega  , \infty )$-\brfrt compactness
from Proposition \ref{simple}(2). Hence (3) holds.
\end{proof}  

A result similar to Corollary \ref{corw} 
holds for $T_1$ spaces (of arbitrary cardinality): see
Corollary \ref{t1cor}.

\begin{corollary} \labbel{lindw}
If $| X|= \omega $, then the Lindel\"of ordinal of $X$ 
is either $ \omega_1$, or is $ \leq \omega \cdot \omega $.  

More generally, if $\kappa$ is regular, and 
$|X|= \kappa $, then the Lindel\"of ordinal 
of $X$ cannot have  the form  $\alpha_1 + \kappa ^ \varepsilon + \gamma $, 
with $0 <\gamma < \kappa \cdot \omega $, and  $\varepsilon >1$
such that $\varepsilon$ is either a successor ordinal, or
$\cf \varepsilon = \kappa $.
 \end{corollary}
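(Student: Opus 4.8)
The plan is to read the Lindel\"of ordinal off the equivalence theorems already established, exploiting the fact that a space of cardinality $\kappa$ can never stray far from cardinal compactness. First I would record the preliminary bound: given any cover of $X$, choosing for each of the (at most $\kappa$) points a single member containing it yields a subcover of cardinality $\le\kappa$, hence of order type $<\kappa^+$. Thus $X$ is $[\delta,\delta]$-compact for every $\delta\ge\kappa^+$, so it is $[\kappa^+,\infty)$-compact and its Lindel\"of ordinal (Definition \ref{lindel}) is at most $\kappa^+$; in particular, for $|X|=\omega$ it is $\le\omega_1$.

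For the first statement I would argue by contradiction, supposing the Lindel\"of ordinal $\alpha$ satisfies $\omega\cdot\omega<\alpha<\omega_1$. Since $\alpha<\omega_1$, I pick any countable \emph{limit} ordinal $\gamma\ge\alpha$ (for instance $\gamma=\alpha+\omega$). As $\alpha$ is the Lindel\"of ordinal, $X$ is $[\gamma,\gamma]$-compact, so the implication (2)$\Rightarrow$(3) of Corollary \ref{corw} makes $X$ be $[\omega\cdot\omega,\infty)$-compact. This forces the Lindel\"of ordinal to be $\le\omega\cdot\omega$, contradicting $\alpha>\omega\cdot\omega$. Hence it is either $\le\omega\cdot\omega$ or equals $\omega_1$.

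For the general statement I would again argue by contradiction, assuming the Lindel\"of ordinal equals $\alpha=\alpha_1+\kappa^\varepsilon+\gamma$ with $0<\gamma<\kappa\cdot\omega$ and $\varepsilon>1$ a successor or of cofinality $\kappa$. Put $\beta=\alpha_1+\kappa^\varepsilon$, so $\beta<\alpha$ and $\alpha<\beta+\kappa\cdot\omega$. Since $\kappa^+$ is a cardinal, hence additively indecomposable, it is not of the stated form (which carries a nonzero tail $\gamma<\kappa\cdot\omega$), so under our hypothesis $\alpha<\kappa^+$ and $|\alpha|\le\kappa$. Now $\delta=\kappa\cdot\alpha$ is a limit ordinal $\ge\alpha$ of the shape $\kappa\cdot\alpha_1'$ with $|\alpha_1'|=|\alpha|\le\kappa$; as $\delta\ge\alpha$, the space is $[\delta,\delta]$-compact, so Clause (2) of Theorem \ref{cork} holds and therefore so does Clause (4). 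Applying (4) to $\beta$, which is of exactly the required form, gives that $X$ is $[\beta,\beta+\kappa\cdot\omega)$-compact, i.e.\ $[\gamma',\gamma']$-compact for every $\gamma'$ with $\beta\le\gamma'<\beta+\kappa\cdot\omega$ (Proposition \ref{simple}(4)).

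It then remains to glue two ranges together, which I expect to be the delicate point. For $\gamma'\ge\beta$ I split into the case $\beta\le\gamma'<\beta+\kappa\cdot\omega$, handled by the $[\beta,\beta+\kappa\cdot\omega)$-compactness just obtained, and the case $\gamma'\ge\beta+\kappa\cdot\omega$; in the latter $\gamma'>\alpha$ because $\alpha=\beta+\gamma<\beta+\kappa\cdot\omega$, so $[\gamma',\gamma']$-compactness follows from $X$ being $[\alpha,\infty)$-compact (Proposition \ref{simple}(2)). Hence $X$ is $[\gamma',\gamma']$-compact for all $\gamma'\ge\beta$, that is $[\beta,\infty)$-compact, forcing the Lindel\"of ordinal to be $\le\beta<\alpha$, a contradiction. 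The main obstacle is precisely this matching: the argument closes only because Theorem \ref{cork}(4) extends compactness by exactly $\kappa\cdot\omega$, which is what bridges the tail $\gamma<\kappa\cdot\omega$ back down to $\beta$. I would also note that the first statement is genuinely stronger than the $\kappa=\omega$ instance of the second, since the ordinals divisible by $\omega\cdot\omega$ have $\gamma=0$ and so are not of the forbidden form; this is why Corollary \ref{corw}, valid for \emph{every} countable limit ordinal, must be invoked separately there.
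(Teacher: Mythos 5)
Your proof is correct and follows essentially the same route as the paper's: the first statement via Corollary \ref{corw} (2) $\Rightarrow$ (3), and the second by noting the Lindel\"of ordinal is $<\kappa^+$, feeding a suitable multiple of $\kappa$ into Clause (2) of Theorem \ref{cork}, and then combining Clause (4) with Proposition \ref{simple} to push compactness back down to $\beta=\alpha_1+\kappa^\varepsilon$. The paper's proof is just a compressed version of yours; your added details (the bound $\le\kappa^+$, the indecomposability of $\kappa^+$, and the explicit gluing of the two ranges) are exactly the steps the paper leaves implicit.
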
 

\begin{proof}
The first statement is immediate from Corollary \ref{corw} (2) $\Rightarrow $  (3).
 
As for the second statement, if the Lindel\"of ordinal of $X$ is $<\kappa^+$, 
then $X$ is 
 $[ \alpha  , \alpha  ]$-\brfrt compact, for some $\alpha$ as in 
Item (2) in Theorem \ref{cork}. The conclusion now follows from Proposition 
\ref{simple} and  Item (4) in Theorem \ref{cork}.
\end{proof}

\section{An exact characterization of transfer properties} \labbel{exact}

In this section we introduce some further examples,  more involved than those presented in Examples \ref{exex}. This is necessary in order to avoid the limitations
given by Theorem \ref{cork} and Corollaries \ref{corkk} and  \ref{corw}.
The examples introduced in this section are optimal, in the sense that they
provide an exact characterization of those ordinals $\alpha$ and $\beta$ such that
 $[ \alpha  , \alpha ]$-\brfrt compactness
implies $[ \beta   , \beta  ]$-\brfrt compactness.

\begin{definitions} \labbel{saa}
As usual, we denote by ${^ \alpha 2}$ the set of 
all the functions from $\alpha$ to $2= \{ 0, 1\} $.

If $ f \in {^ \alpha 2}$, the \emph{support} of $f$ 
is $\{ \delta \in \alpha \mid f( \delta )=1\}$.   

For nonzero ordinals $\beta \leq \alpha $, we now define 
$S_ \beta  ( \alpha ) = 
\{ f \in {^ \alpha 2} \mid \text{the support of }\brfr f \text{ has order type } < \beta \}$. 

$S_ \beta  ( \alpha ) $ is in a one-to one correspondence,
via characteristic functions, with the 
set of all subsets of $\alpha$ which have order type $<\beta$.
The $S$ in our notation is a reminder for \emph{Subset}.  
However, 
in the present note, we shall mainly
deal with elements of $ {^ \alpha 2}$,
rather than with subsets of $\alpha$,
since it will be more convenient
for our purposes.

 We shall mainly deal with the case $\beta= \alpha $,
and we shall consider various families of subsets of
$S_ \beta  ( \alpha ) $. 

We put
$X( \beta, \alpha )= ( S_ \beta  ( \alpha ) , \tau_0  )  $,
 where the elements of $\tau_0$ are all the subsets
of $ S_ \beta  ( \alpha )$ having the form
$Z( \varepsilon) =\{ f \in  S_ \beta  ( \alpha ) \mid f( \varepsilon ) =0\}$,
$\varepsilon$ varying in $\alpha$.  

We also let $X_U( \beta, \alpha )= ( S_ \beta  ( \alpha ) , \tau_{_U}  )  $,
 where $\tau_{_U}$ is the smallest family of subsets of 
$S_ \beta  ( \alpha ) $ which contains 
 $\tau_0$ above, and is closed under unions. In other words,
a generic element of $\tau_{_U}$ has the form
$ \bigcup _{ \varepsilon \in H} Z( \varepsilon ) =
\{ f \in S_ \beta  ( \alpha ) \mid  f( \varepsilon ) =0 \text{, for some } \varepsilon \in H  \} $,
 for some $H \subseteq \alpha $. 
 
For $\alpha \leq 2$
and $\beta >1$, 
 neither  $\tau_0$ nor  $\tau_{_U}$ are topologies, since they are not closed
under finite intersections. However, if we take the closure of 
$\tau_{_U}$ under finite intersections, we do get a topology $\tau$
on $S_ \beta  ( \alpha )$. 
For 
${\barb{\varepsilon} }= \{ \varepsilon_0, \varepsilon _1, \dots, \varepsilon _{n-1} \} 
 \in S _{n+1}( \alpha )  $, 
let $Z({\barb{\varepsilon} })=Z( \varepsilon_0, \varepsilon _1, \dots, \varepsilon _{n-1} ) =
Z( \varepsilon_0) \cap  Z( \varepsilon _1) \cap \dots \cap Z(\varepsilon _{n-1} )=
\{ f \in  S_ \beta  ( \alpha ) \mid 
f( \varepsilon_0 ) = f(\varepsilon _1)= \dots = f(\varepsilon _{n-1} )= 0\}$.
Members of $\tau$ have then the  form 
$ \bigcup _{ \barb{\varepsilon} \in H } Z( \barb{\varepsilon} ) $,
$H$ varying among the subsets of $S _ \omega ( \alpha )$.
We let $X_ \tau ( \beta, \alpha )= ( S_ \beta  ( \alpha ) , \tau)  $.

The above topology $\tau$ is $T_0$, but not even $T_1$. 
A topology satisfying stronger separation axioms can be introduced
as follows. 

$X_T( \beta, \alpha )= ( S_ \beta  ( \alpha ) , \tau_{_T}  )  $,
where $ \tau_{_T}$ is the (Tychonoff) topology inherited by the product 
topology on $ {{^ \alpha 2}}$, where $2$ is given the discrete topology.   
Notice that $X_T( \beta, \alpha )$ inherits from $ {{^ \alpha 2}}$ also the 
structure of a  topological group,

We shall write $X( \beta ) $ in place of $X( \beta , \beta )$, and similarly for 
$X_U( \beta )$, $X_ \tau ( \beta  )$, and $X_T( \beta )$.
The subscript $\tau$ is a reminder for \emph{topology}, the subscript $U$ is a reminder for (closed under) \emph{Unions}, and the  
subscript $T$ is a reminder for  \emph{Tychonoff}.
 \end{definitions}   

\begin{remark} \labbel{precedenti}
Similar constructions, when restricted to cardinal numbers, have sometimes been considered in the literature. See, e. g.,  \cite[Example 4.1]{AB}, 
\cite{topproc}
and \cite[Example 4.2]{St}.
 \end{remark}   

\begin{lemma} \labbel{covers}
Suppose $0<\beta \leq \alpha  $, and assume the notations in Definition  \ref{saa}.

If $H \subseteq \alpha $, then the sequence
$(Z( \varepsilon)) _{ \varepsilon \in H }$
is a cover of  $X( \beta, \alpha )$ if and only if 
$H$ has order type $ \geq\beta$.
In particular, $X( \beta, \alpha )$ is not 
$[ \beta, \beta ]$-\brfrt compact, hence neither
$X_U( \beta, \alpha )$, nor $X_ \tau ( \beta, \alpha )$, nor 
$X_T( \beta, \alpha )$ are $[ \beta, \beta ]$-\brfrt compact. 
\end{lemma}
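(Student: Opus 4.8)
The plan is to reduce the covering condition to a transparent statement about supports, and then read off both directions from the monotonicity of order type under inclusion. First I would determine exactly which points of $X( \beta, \alpha )$ escape the family $(Z( \varepsilon )) _{ \varepsilon \in H }$. By definition $f \notin Z( \varepsilon )$ precisely when $f( \varepsilon )=1$, so $f$ lies outside \emph{every} $Z( \varepsilon )$ with $ \varepsilon \in H$ if and only if $f( \varepsilon )=1$ for all $ \varepsilon \in H$, that is, if and only if $H$ is contained in the support of $f$. Hence $(Z( \varepsilon )) _{ \varepsilon \in H }$ covers $X( \beta, \alpha )$ if and only if no $f \in S_ \beta ( \alpha )$ has support containing $H$.

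With this reformulation both implications are immediate. If $H$ has order type $ \geq \beta $ and some $f \in S_ \beta ( \alpha )$ satisfied $H \subseteq \operatorname{supp}(f)$, then $\operatorname{supp}(f)$ would have order type $ \geq \beta $ (order type is monotone under inclusion), contradicting $f \in S_ \beta ( \alpha )$; so no such $f$ exists and $(Z( \varepsilon )) _{ \varepsilon \in H }$ is a cover. Conversely, if $H$ has order type $< \beta $, the characteristic function $\chi_H$ of $H$ itself lies in $S_ \beta ( \alpha )$, since its support is exactly $H$, of order type $< \beta $; and $H \subseteq \operatorname{supp}( \chi_H )$, so $\chi_H$ is left uncovered and $(Z( \varepsilon )) _{ \varepsilon \in H }$ is not a cover. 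This establishes the stated equivalence.

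For the ``in particular'' clause I would exhibit the explicit witnessing cover. Using $ \beta \leq \alpha $, view $\beta$ as the initial segment $[0, \beta )$ of $\alpha$ and consider the $\beta$-indexed sequence $(Z( \varepsilon )) _{ \varepsilon \in \beta }$. By the first part it is a cover, since $\beta$ has order type $\beta \geq \beta $. Any subcover is of the form $(Z( \varepsilon )) _{ \varepsilon \in H }$ for some $H \subseteq \beta $, and again by the first part such an $H$ covers only if its order type is $ \geq \beta $, hence exactly $\beta$ (as $H \subseteq \beta $ forces order type $ \leq \beta $). Thus no subcover is indexed by a set of order type $< \beta $, which is precisely the failure of $[ \beta, \beta ]$-\brfrt compactness.

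Finally, the passage to $X_U( \beta, \alpha )$, $X_ \tau ( \beta, \alpha )$, and $X_T( \beta, \alpha )$ costs nothing. All four spaces share the underlying set $S_ \beta ( \alpha )$, and each of $\tau_{_U}$, $\tau$, $\tau_{_T}$ contains every $Z( \varepsilon )$ (for $\tau_{_T}$ because $Z( \varepsilon )=\{ f \mid f( \varepsilon )=0\}$ is a basic clopen set of the product topology). Since being a cover, and being a subcover of prescribed order type, depend only on the underlying set and on the sets $Z( \varepsilon )$ themselves, and not on which family is placed on $S_ \beta ( \alpha )$, the very same sequence $(Z( \varepsilon )) _{ \varepsilon \in \beta }$ witnesses $[ \beta, \beta ]$-\brfrt incompactness in each case. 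I do not expect a real obstacle here; the only point requiring care is that order type is monotone but not injective under inclusion, so the argument must be routed through the support reformulation rather than comparing order types of $H$ and its supersets directly.
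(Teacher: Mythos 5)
Your proof is correct and follows essentially the same route as the paper's: both directions reduce to the observation that $f$ escapes every $Z(\varepsilon)$, $\varepsilon \in H$, exactly when $H$ is contained in the support of $f$, with the characteristic function of $H$ as the witness when $H$ has order type $<\beta$, and incompactness witnessed by a set of order type $\beta$. Your explicit remarks on monotonicity of order type under inclusion and on $Z(\varepsilon)$ belonging to each of $\tau_{_U}$, $\tau$, $\tau_{_T}$ merely spell out what the paper leaves implicit.
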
 

\begin{proof} 
If $H$ has order type $ <\beta$,
define $f: \alpha \to 2$
by $f ( \delta )= 1$ if and only if
$ \delta \in H$.
Then $f \in X( \beta, \alpha )$, but 
$f$ belongs to no $Z ( \varepsilon ) $ ($\varepsilon \in H$).    

On the contrary, 
suppose by contradiction that $H$ has order type $ \geq\beta$,
but there is $f \in X( \beta, \alpha )$ such that 
$f$ belongs to no $Z ( \varepsilon ) $ ($\varepsilon \in H$). 
If $f \not\in Z ( \varepsilon )$, then $f( \varepsilon )= 1$,
thus the support of $f$ contains $H$, which has order type    
$ \geq\beta$, and this contradicts $f \in X( \beta, \alpha )$.

In order to show that  $X( \beta, \alpha )$ is not 
$[ \beta, \beta ]$-\brfrt compact,
it is enough to choose some $H \subseteq \alpha$
of order type $\beta$. 
Then, by above,  $(Z( \varepsilon)) _{ \varepsilon \in H }$
is a cover of  $X( \beta, \alpha )$, but 
if $K \subseteq H$ has order type $<\beta$,
then $(Z( \varepsilon)) _{ \varepsilon \in K }$ 
is not a cover of  $X( \beta, \alpha )$.
The same argument works for   
 $X_U( \beta, \alpha )$, $X_ \tau ( \beta, \alpha )$,  and 
$X_T( \beta, \alpha )$.
\end{proof}

\begin{theorem} \labbel{iff1}
Let $\alpha$ and  $\beta$ be nonzero ordinals, and
assume the notations in Definition  \ref{saa}.
Then the following conditions are equivalent.
      \begin{enumerate}  
      \item[(a)]
$X( \beta ) $ is not $[ \alpha, \alpha ]$-\brfrt compact.
       \item[(b)]
There exists an injective function
$f: \beta  \to \alpha  $ 
such that, for every  $K \subseteq \alpha  $ with order type 
$<\alpha$, it happens that $f ^{-1} (K)$ has order type $< \beta  $.
       \item[(c)]
For arbitrary $(X, \tau )$, $[ \alpha, \alpha ]$-\brfrt compactness implies
$[ \beta, \beta ]$-\brfrt compactness.
       \end{enumerate}   
 \end{theorem}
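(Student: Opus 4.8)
The plan is to establish the cycle (a) $\Rightarrow$ (b) $\Rightarrow$ (c) $\Rightarrow$ (a). Two of these arrows are short consequences of machinery already in place, so essentially all the content lies in (a) $\Rightarrow$ (b).

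I would dispose of (b) $\Rightarrow$ (c) immediately by appealing to Proposition \ref{transfer}. Condition (b) furnishes an injective $f\colon \beta \to \alpha$ carrying every $K \subseteq \alpha$ of order type $<\alpha$ to a preimage $f^{-1}(K)$ of order type $<\beta$; this is exactly the hypothesis of Proposition \ref{transfer} when its four ordinals $\beta,\alpha,\beta',\alpha'$ are taken to be $\alpha,\alpha,\beta,\beta$, and its conclusion is then that $[\alpha,\alpha]$-\brfrt compactness implies $[\beta,\beta]$-\brfrt compactness for arbitrary $(X,\tau)$, which is (c). For (c) $\Rightarrow$ (a) I would argue by contradiction: Lemma \ref{covers} already records that $X(\beta)=X(\beta,\beta)$ is not $[\beta,\beta]$-\brfrt compact, so if $X(\beta)$ were $[\alpha,\alpha]$-\brfrt compact then (c), applied to $X(\beta)$ itself, would make it $[\beta,\beta]$-\brfrt compact, a contradiction; hence (a) holds.

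The substance is (a) $\Rightarrow$ (b). First I would translate non-$[\alpha,\alpha]$-\brfrt compactness of $X(\beta)$ into a combinatorial statement about functions $\alpha \to \beta$. Every member of $\tau_0$ has the form $Z(\varepsilon)$ with $\varepsilon \in \beta$, so an $\alpha$-indexed cover is coded by a function $g\colon \alpha \to \beta$ via $O_\delta = Z(g(\delta))$, and for $D \subseteq \alpha$ the subfamily $(Z(g(\delta)))_{\delta \in D}$ covers exactly when $\bigcup_{\varepsilon \in g(D)}Z(\varepsilon)=S_\beta(\beta)$, i.e.\ (by Lemma \ref{covers}) when $g(D)$ has order type $\geq \beta$. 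Since $g(D)\subseteq\beta$ has order type $\leq\beta$, this means order type exactly $\beta$. Thus a cover witnessing non-$[\alpha,\alpha]$-\brfrt compactness is precisely a $g\colon\alpha\to\beta$ whose image $g(\alpha)$ has order type $\beta$ and for which every $D\subseteq\alpha$ of order type $<\alpha$ has $g(D)$ of order type $<\beta$.

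Given such a $g$, I would build the required $f$ as an order-coherent section of $g$. Let $\pi\colon\beta\to g(\alpha)$ be the increasing enumeration of the image (an order isomorphism, since $g(\alpha)$ has order type $\beta$), and for each $\xi\in\beta$ choose $f(\xi)$ in the nonempty fiber $g^{-1}(\{\pi(\xi)\})$. The fibers over distinct points $\pi(\xi)$ are disjoint, so $f$ is injective. The one computation that matters is that, for any $K\subseteq\alpha$, one has $\pi(f^{-1}(K))\subseteq g(K)$: if $\xi\in f^{-1}(K)$ then $f(\xi)\in K$, and since $g(f(\xi))=\pi(\xi)$ we get $\pi(\xi)\in g(K)$. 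As $\pi$ is order preserving, $f^{-1}(K)$ has the same order type as $\pi(f^{-1}(K))$, which, being a subset of $g(K)$, has order type at most that of $g(K)$. Hence whenever $K$ has order type $<\alpha$ the property of $g$ gives $g(K)$ of order type $<\beta$, and therefore $f^{-1}(K)$ has order type $<\beta$, which is (b). I expect the only delicate point to be exactly this transfer of order type through $\pi$ and to subsets: one must track order type rather than mere cardinality throughout, which is what forces the use of the order isomorphism $\pi$ rather than an arbitrary choice of preimages.
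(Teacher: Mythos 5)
Your proposal is correct and follows essentially the same route as the paper: (b) $\Rightarrow$ (c) is Proposition \ref{transfer} with its ordinals specialized exactly as you say, (c) $\Rightarrow$ (a) is the same contradiction via Lemma \ref{covers}, and (a) $\Rightarrow$ (b) is the same coding of a witnessing cover by a map between $\alpha$ and $\beta$, with Lemma \ref{covers} translating covering into order-type conditions. The only (cosmetic) difference is in extracting the injective $f$: you choose a section of the possibly non-injective coding map $g\colon\alpha\to\beta$ through the increasing enumeration of its image, while the paper first invokes Lemma \ref{irredundant} to make the cover's elements pairwise distinct, so that the coding is injective and can simply be inverted; both devices yield the same function.
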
 

\begin{proof} 
 (a) $\Rightarrow $  (b). Suppose that (a) holds. 
Then $X( \beta )$ has a cover 
$(O_ \delta ) _{ \delta \in \alpha } $
such that, whenever $H \subseteq \alpha$ has order type
$<\alpha$, then
$(O_ \delta ) _{ \delta \in H } $
is not a cover of $X( \beta )$.
By Lemma \ref{irredundant}, we can suppose that
$O_ \delta \not= O _{ \delta '} $, for 
$\delta \not= \delta ' \in \alpha $.
 Because of the definition of $\tau_0$, 
for each $\delta \in \alpha $, there is 
$\varepsilon \in \beta $ such that  
$O_ \delta = Z( \varepsilon )$. 
Let $W= \{ \varepsilon \in \beta \mid 
Z( \varepsilon )=O_ \delta \text{, for some } \delta \in \alpha \} \subseteq \beta $.
Since 
$(O_ \delta ) _{ \delta \in \alpha } $ 
is a cover of $X( \beta )$,
then also 
$ (Z( \varepsilon )) _{ \varepsilon \in W} $ 
is a cover of $X( \beta )$.
By Lemma \ref{covers}, $W$ has order type $\beta$.

Let $g: W \to \alpha $ be defined by 
$g( \varepsilon )= \delta $ if and only if 
$Z( \varepsilon )=O_ \delta$. 
Such a $\delta$ exists because of the definition of $W$,
and is unique because of the property   
 $(O_ \delta ) _{ \delta \in \alpha } $ is assumed to satisfy.

If  $K \subseteq \alpha  $ has order type 
$<\alpha$, then, by 
$[ \alpha, \alpha ]$-\brfrt incompactness,
 $(O_ \delta ) _{ \delta \in K} $ is not a cover of
$X( \beta )$.
Hence, $ (Z( \varepsilon )) _{ \varepsilon \in g^{-1}(K) } $
is not a cover of $X( \beta )$.
 By Lemma \ref{covers}, $g^{-1}(K)$ has order type $<\beta$.

Thus, the counterimage by $g$ of a subset of $ \alpha $
of order type $<\alpha$  has order type $<\beta$.
Since $W$ has order type $\beta$, then, by composing $g$
with an isomorphism between $W$ and $ \beta $, we get a function $f$
satisfying the required property.  
Notice that $g$ (hence also $f$) is injective, since
$Z( \varepsilon ) \not = Z( \varepsilon ')$, for 
$ \varepsilon  \not =  \varepsilon '$.

(b) $\Rightarrow $  (c) is a particular case of Proposition \ref{transfer}.

(c) $\Rightarrow $  (a). If (c) holds, then
$X( \beta )$
is not
$[ \alpha  , \alpha   ]$-\brfrt compact,
since, by Lemma \ref{covers},
 it is not  
$[ \beta , \beta  ]$-\brfrt compact.
\end{proof} 
 
\begin{remark} \labbel{w+ww2}  
Thus, for example, for every pair 
$\nu \leq  \kappa $ of infinite regular
cardinals, $ [ \kappa + \kappa , \kappa  + \kappa  ] $-\brfrt compactness
does not imply 
$ [ \kappa \cdot  \nu, \kappa \cdot  \nu ] $-\brfrt compactness,
since there is no function 
$f: \kappa \cdot \nu  \to \kappa  + \kappa  $
satisfying Condition (b) in Theorem \ref{iff1}.  

Similarly,
$ [ \kappa^2 + \kappa , \kappa^2  + \kappa  ] $-\brfrt compactness
does not imply 
$ [ \kappa \cdot  \nu , \kappa \cdot  \nu] $-\brfrt compactness.

Thus, Corollary \ref{cortransfer}(2)(3) cannot be improved. Notice that, because of  
Theorem \ref{cork}(2) $\Rightarrow $  (1), 
 if $X$ is 
$ [ \kappa  + \kappa , \kappa  + \kappa  ] $-\brfrt compact 
and not
$ [ \kappa ^2, \kappa ^2] $-\brfrt compact,
 then $|X| > \kappa  $. 
 \end{remark} 

\begin{corollary} \labbel{coriff1} 
Suppose that $\alpha$ and $\beta$ are nonzero ordinals,
and $| \alpha | \not =| \beta |$. Then the following statements hold. 
\begin{enumerate} 
\item 
$X( \beta )$ is  
$ [ \alpha  , \alpha   ] $-compact.
\item
There is some  $(X, \tau )$
which is $ [ \beta , \beta  ] $-compact
and not  
 $ [ \alpha  , \alpha   ] $-compact.
\end{enumerate}  
\end{corollary}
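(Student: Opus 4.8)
The plan is to obtain both statements from Theorem \ref{iff1}, used once as stated and once with the roles of $\alpha$ and $\beta$ interchanged (the theorem is valid for any nonzero ordinals). The common engine is a single cardinality remark about condition (b): \emph{if $P$ and $Q$ are nonzero ordinals with $|P|\neq|Q|$, then there is no injection $f\colon Q\to P$ such that $f^{-1}(K)$ has order type $<Q$ for every $K\subseteq P$ of order type $<P$.} I would prove this by comparing the two cardinalities. If $|Q|>|P|$ there is no injection $Q\to P$ at all, so (b) fails vacuously. If $|Q|<|P|$, then for an arbitrary injection $f$ I would test the required property on $K=f(Q)$: since $|K|=|Q|<|P|$, the order type of $K$ is $<|Q|^+\le|P|\le P$, so $K$ has order type $<P$, whereas $f^{-1}(K)=Q$ has order type $Q$, which is not $<Q$. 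Hence no injection can satisfy the property, and (b) fails.

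For part (1) I would apply Theorem \ref{iff1} directly. The remark with $P=\alpha$ and $Q=\beta$ (legitimate since $|\alpha|\neq|\beta|$) shows that condition (b) of the theorem fails, hence so does (a); that is, $X(\beta)$ is $[\alpha,\alpha]$-compact.

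For part (2) the witness is $X(\alpha)$. It is not $[\alpha,\alpha]$-compact by Lemma \ref{covers}. To see that it is $[\beta,\beta]$-compact, I would invoke the instance of Theorem \ref{iff1} in which the theorem's two ordinals are taken to be $\beta$ and $\alpha$ (in that order): this instance states that $X(\alpha)$ is not $[\beta,\beta]$-compact if and only if there is an injection $f\colon\alpha\to\beta$ with the analogous pullback property. The remark, now with $P=\beta$ and $Q=\alpha$, rules out any such injection, so $X(\alpha)$ is $[\beta,\beta]$-compact, which completes part (2).

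The only point requiring care is the order-type estimate inside the remark: that a subset of $P$ of cardinality strictly below $|P|$ must have order type $<P$. This rests on $|Q|^+\le|P|\le P$, and I would check that it survives the boundary case in which $P$ is itself a cardinal. Beyond this, the argument is just a matched pair of applications of Theorem \ref{iff1} together with Lemma \ref{covers}, so I expect no substantive obstacle.
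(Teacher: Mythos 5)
Your proposal is correct and follows essentially the same route as the paper: the paper also disposes of condition (b) of Theorem \ref{iff1} by noting that an injection $f\colon\beta\to\alpha$ forces $|\alpha|>|\beta|$ and then testing the pullback property on $K=f(\beta)$, whose order type is $<\alpha$ while $f^{-1}(K)=\beta$ is not of order type $<\beta$. The only difference is cosmetic: the paper simply concludes that the equivalent conditions (a) and (c) fail (leaving the symmetric swap of $\alpha$ and $\beta$ needed for part (2) implicit), whereas you spell out that part (2) is witnessed by $X(\alpha)$ via Lemma \ref{covers} and the instance of Theorem \ref{iff1} with the roles of $\alpha$ and $\beta$ interchanged, which is a legitimate and slightly more careful reading of the same argument.
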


 \begin{proof}
If  
$f: \beta  \to \alpha  $  is an injective function, 
then $| \alpha |  >| \beta |$, 
since $| \alpha | \not =| \beta |$.
Hence 
  $K = f( \beta )\subseteq \alpha  $ has order type 
$<\alpha$, but $f ^{-1} (K) = \beta $ has order type $ \beta  $.
Hence Condition (b) in Theorem \ref{iff1} fails, hence also the equivalent
Conditions (a) and (c) fail.
 \end{proof}  

Of course, Corollary \ref{coriff1} does not hold in the case when $\tau$ is 
requested to be closed under unions. See, e. g., Corollary  \ref{cortransfer}(6)-(8). 
The next Theorem is the analogue of Theorem \ref{iff1} 
in the case when $\tau$ is asked to be closed under unions.

\begin{theorem} \labbel{iff2}
Let $\alpha$, $\beta$ be nonzero ordinals, and
assume the notations in Definition  \ref{saa}.
Then the  following conditions are equivalent.
      \begin{enumerate}  
      \item[(a)]
$X_U( \beta ) $ is not $[ \alpha, \alpha ]$-\brfrt compact.
       \item[(b)]
There exists a  function
$f: \beta  \to \alpha  $ 
such that, for every  $K \subseteq \alpha  $ with order type 
$<\alpha$, it happens that $f ^{-1} (K)$ has order type $< \beta  $.
       \item[(c)]
For every $X$ and $\tau$, if  $ \tau $ is closed under unions, then $[ \alpha, \alpha ]$-\brfrt compactness 
of $(X, \tau )$
implies
$[ \beta, \beta ]$-\brfrt compactness
of $(X, \tau )$.
       \end{enumerate}   
 \end{theorem}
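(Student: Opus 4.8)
The plan is to run the same three-step cycle (a) $\Rightarrow$ (b) $\Rightarrow$ (c) $\Rightarrow$ (a) as in Theorem \ref{iff1}, adapting each step to the two changes: $\tau_{_U}$ is now closed under unions, and the function $f$ in (b) is no longer required to be injective.

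First the two short implications. For (b) $\Rightarrow$ (c), I would invoke the last sentence of Proposition \ref{transfer}. Indeed, taking in that proposition $\beta'=\alpha'=\beta$ and both remaining parameters equal to $\alpha$, condition (b) here is exactly the hypothesis there, and since injectivity of $f$ may be dropped when $\tau$ is closed under unions, we conclude that $[\alpha,\alpha]$-compactness implies $[\beta,\beta]$-compactness for every such $(X,\tau)$. For (c) $\Rightarrow$ (a), I would note that the space $X_U(\beta)$ has $\tau_{_U}$ closed under unions and, by Lemma \ref{covers}, is not $[\beta,\beta]$-compact; applying the contrapositive of (c) to $(X_U(\beta),\tau_{_U})$ itself gives that it is not $[\alpha,\alpha]$-compact, which is (a).

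The substance is (a) $\Rightarrow$ (b), and the key ingredient is a description of which subfamilies cover $X_U(\beta)$. Given a cover of $X_U(\beta)$, I would write each of its members as $O_\delta=\bigcup_{\varepsilon\in H_\delta}Z(\varepsilon)$ with $H_\delta\subseteq\beta$, which is possible by the definition of $\tau_{_U}$ in Definition \ref{saa}; unlike in Theorem \ref{iff1}, there is no need to pass first to an irredundant cover via Lemma \ref{irredundant}, precisely because we do not need $f$ to be injective. For any $D\subseteq\alpha$, the union $\bigcup_{\delta\in D}O_\delta$ equals $\bigcup_{\varepsilon\in W_D}Z(\varepsilon)$, where $W_D=\bigcup_{\delta\in D}H_\delta\subseteq\beta$, so by Lemma \ref{covers} the subfamily $(O_\delta)_{\delta\in D}$ covers $X_U(\beta)$ if and only if $W_D$ has order type $\beta$ (equivalently order type $\geq\beta$, since $W_D\subseteq\beta$). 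Consequently, a cover $(O_\delta)_{\delta\in\alpha}$ witnessing $[\alpha,\alpha]$-incompactness is exactly one for which $W_\alpha$ has order type $\beta$ while $W_D$ has order type $<\beta$ for every $D\subseteq\alpha$ of order type $<\alpha$.

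Given such a witnessing cover, I would let $h\colon\beta\to W_\alpha$ be the order isomorphism (legitimate since $W_\alpha$ has order type $\beta$) and define $f\colon\beta\to\alpha$ by choosing, for each $\zeta\in\beta$, some $\delta$ with $h(\zeta)\in H_\delta$ and setting $f(\zeta)=\delta$. Then for any $K\subseteq\alpha$, the membership $\zeta\in f^{-1}(K)$ forces $h(\zeta)\in H_{f(\zeta)}\subseteq W_K$, so $h$ maps $f^{-1}(K)$ into $W_K$; as $h$ is order preserving, the order type of $f^{-1}(K)$ is at most that of $W_K$. When $K$ has order type $<\alpha$, the incompactness property yields that $W_K$ has order type $<\beta$, whence $f^{-1}(K)$ has order type $<\beta$, which is precisely condition (b). The only genuinely delicate point is this cover characterization: recognizing that covers of $X_U(\beta)$ are governed solely by the order type of the union $\bigcup_\delta H_\delta$ of the index sets is exactly where closure under unions enters and why the injectivity requirement of Theorem \ref{iff1} can be dispensed with; once it is in hand, the construction of $f$ is routine.
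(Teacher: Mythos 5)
Your proposal is correct and follows essentially the same route as the paper: the cycle (a) $\Rightarrow$ (b) $\Rightarrow$ (c) $\Rightarrow$ (a), with (b) $\Rightarrow$ (c) from the last statement of Proposition \ref{transfer}, (c) $\Rightarrow$ (a) from Lemma \ref{covers} applied to $X_U(\beta)$ itself, and (a) $\Rightarrow$ (b) obtained by reading off a map to $\alpha$ from the index sets $H_\delta$ and controlling preimages via Lemma \ref{covers}. The only (immaterial) difference is that where you pick $f(\zeta)$ by an arbitrary choice of $\delta$ with $h(\zeta)\in H_\delta$, the paper first disjointifies the sets $W_\delta$ (setting $W^*_\delta = W_\delta\setminus\bigcup_{\gamma<\delta}W_\gamma$) so that the assignment $\varepsilon\mapsto\delta$ is canonical.
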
 

\begin{proof}
 (a) $\Rightarrow $  (b).
Suppose that (a) holds,
and that
$(O_ \delta ) _{ \delta \in \alpha } $
is a counterexample to the
$[ \alpha , \alpha ]$-\brfrt compactness
of
$X( \beta )$.
By the definition of 
$ \tau_{_U}$, each $O_ \delta $ 
has the form  
$ \bigcup _{ \varepsilon \in W _ \delta } Z( \varepsilon )$,
 for some $ W_ \delta  \subseteq \beta $. 

For $\delta \in \alpha $, let
$W _ \delta ^* = W_ \delta  \setminus \bigcup _{ \gamma < \delta }  W _ \gamma   $,
and let 
$O_ \delta ^*= \bigcup _{ \varepsilon \in W ^* _ \delta } Z( \varepsilon )$.
Notice that $(O^*_ \delta ) _{ \delta \in \alpha } $
is still a cover of $X( \beta )$, hence it is still a counterexample to the
$[ \alpha , \alpha ]$-\brfrt compactness
of
$X( \beta )$,
since $O^*_ \delta \subseteq O _ \delta $, for every  $  \delta \in \alpha  $.
 
Since $(O_ \delta^* ) _{ \delta \in \alpha } $  covers $X( \beta )$,
we have 
$   \bigcup \{ Z ( \varepsilon ) \mid 
 \varepsilon \in W ^*_ \delta \text{, for some  } \delta \in \alpha  \} 
=\bigcup \{ Z ( \varepsilon ) \mid 
 \varepsilon \in \bigcup _{ \delta \in \alpha } W^* _ \delta  \}=
 X( \beta ) $, hence, by Lemma \ref{covers},  
the order type of $W=  \bigcup _{ \delta \in \alpha } W ^*_ \delta =
\bigcup _{ \delta \in \alpha } W _ \delta$ 
 equals $\beta$. 
  
Let $g: W \to \alpha $ be defined by 
$g( \varepsilon )= $ the unique $  \delta \in \alpha $
such that $  \varepsilon \in W^* _\delta   $.
If  $K \subseteq \alpha  $ has order type 
$<\alpha$, then, by 
$[ \alpha, \alpha ]$-\brfrt incompactness,
 $(O^*_ \delta ) _{ \delta \in K} $ is not a cover of
$X( \beta )$.
Hence $ (Z( \varepsilon )) _{ \varepsilon \in g^{-1}(K) } $
is not a cover of $X( \beta )$.
 By Lemma \ref{covers}, $g^{-1}(K)$ has order type $<\beta$.

We have proved that the counterimage by $g$ of a subset of $\alpha$ 
of order type $<\alpha$  has order type $<\beta$, thus, arguing as in 
corresponding part of the proof of 
Theorem \ref{iff1}, and since $W$ has order type $\beta$,   we get a function $f$
as desired.  
 
(b) $\Rightarrow $  (c) follows from the last statement in Proposition \ref{transfer}.

(c) $\Rightarrow $  (a). If (c) holds, then
$X_U( \beta )$
is not
$[ \alpha  , \alpha   ]$-\brfrt compact,
since, by Lemma \ref{covers},
 it is not  
$[ \beta , \beta  ]$-\brfrt compact, 
and since $\tau _{_U} $ is closed under unions.
\end{proof}

\section{$[\alpha, \beta ]$-\brfrt compactness of $T_1$ spaces} \labbel{t1sec}

The counterexamples presented in Examples \ref{exex}(2)  and  \ref{exgen} satisfy very few separation axioms.
In fact, we can show that more results about
$[ \beta  , \alpha ]$-\brfrt compactness can be proved just on the assumption
that we are dealing with $T_1$ topological spaces.
Indeed, since  in this note we have kept the greatest possible generality, 
we mention that we do not actually need a $T_1$ topological space, in order to prove
 the results in the present section. The following weaker notion is enough.

\begin{definition} \labbel{t1surrogate}
If $X$ is a nonempty set,
and $\tau$ is a nonempty family of subsets of $X$, we say that 
$(X, \tau)$ is $T_1$ if and only if, 
for every $O \in \tau$, and every $x \in O$,
$O \setminus  \{ x\} \in \tau  $.

Clearly, the above condition is equivalent to asking that,
for every $O \in \tau$, and every finite $ F \subseteq X$,
$O \setminus  F \in \tau  $. 
Trivially, if $\tau$ is a topology on $X$, then 
$(X, \tau)$ is $T_1$ in the above sense if and only if 
it is $T_1$ in the ordinary topological theoretical sense.
\end{definition}   

It is convenient to introduce some notation, in order to state the next Proposition 
more concisely.

\begin{definition} \labbel{alphaell}    
If $ \beta $ is an infinite ordinal, we let $ \beta  ^{\ell}$ be the largest limit
ordinal $ \leq \beta $. Thus, $ \beta  ^{\ell}= \beta  - n$, for an appropriate 
$n \in \omega$.
\end{definition}

\begin{proposition} \labbel{basict1} 
Suppose that $X$ is  $T_1$, and let $\alpha$ be an infinite ordinal.
\begin{enumerate}
\item
$X$ is $[ \alpha , \alpha ]$-\brfrt compact if and only if  
$X$ is $[ \alpha +1, \alpha+1 ]$-\brfrt compact. 
\item
  For every $ n \in \omega$ and infinite $\beta \leq \alpha $, 
 $X$ is $[ \beta  , \alpha ]$-\brfrt compact
if and only if it is $[ \beta  ^{\ell} , \alpha+n ]$-\brfrt compact.
\item
  For every  infinite $\beta \leq \alpha $, 
 $X$ is $[ \beta  , \alpha ]$-\brfrt compact
if and only if it is $[ \beta  ^{\ell} , \alpha+ \omega ) $-\brfrt compact.
\item
If $\beta \leq \alpha $ and $\beta$ is infinite,
 then
 $X$ is $[ \beta  , \alpha ]$-\brfrt compact
 if and only if it is $[ \gamma  , \gamma  ]$-\brfrt compact,
for every limit ordinal $\gamma$ with $ \beta  ^{\ell} \leq \gamma \leq \alpha  $.
\end{enumerate}   
\end{proposition}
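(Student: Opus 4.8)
The plan is to reduce all four parts to the single nontrivial assertion in part (1), namely that for $T_1$ spaces and infinite $\alpha$, $[\alpha+1,\alpha+1]$-compactness implies $[\alpha,\alpha]$-compactness. First I would record the auxiliary fact that, for a $T_1$ space and infinite $\gamma$, $[\gamma,\gamma]$-compactness is equivalent to $[\gamma^\ell,\gamma^\ell]$-compactness (notation as in Definition~\ref{alphaell}): writing $\gamma=\gamma^\ell+m$ with $m<\omega$, one iterates part~(1) $m$ times, each step applying it to the infinite ordinal $\gamma^\ell+i$. Thus $[\gamma,\gamma]$-compactness depends only on $\gamma^\ell$. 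Granting this, parts (2), (3), (4) are bookkeeping with Proposition~\ref{simple}: the reverse implications in (2) and (4) are immediate from Proposition~\ref{simple}(1) (since $\beta^\ell\le\beta$ and $\alpha\le\alpha+n$), and for the forward directions one notes that, by Proposition~\ref{simple}(2),(4), $[\beta,\alpha]$-, $[\beta^\ell,\alpha+n]$-, and $[\beta^\ell,\alpha+\omega)$-compactness are each equivalent to $[\gamma,\gamma]$-compactness for $\gamma$ ranging over the respective intervals; by the auxiliary fact these all reduce to $[\delta,\delta]$-compactness for $\delta$ ranging over the \emph{limit} ordinals in $[\beta^\ell,\alpha^\ell]$, the crucial point being that $(\alpha+n)^\ell=\alpha^\ell$ and that the limits $\delta$ with $\beta^\ell\le\delta\le\alpha$ are exactly the values $\gamma^\ell$ for $\gamma\in[\beta,\alpha]$. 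Part (4) is then just this reduction read off directly.

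So the whole content sits in part (1). Its forward direction needs no separation: $[\alpha,\alpha]$-compactness implies $[\alpha,\alpha+1]$-compactness by Corollary~\ref{cortransfer}(1), and then $[\alpha+1,\alpha+1]$-compactness by Proposition~\ref{simple}(1). The reverse direction is where $T_1$ is essential (it must be, since $(\kappa,\iit)$ in Example~\ref{exex}(2) is a non-$T_1$ counterexample), and I would prove the contrapositive: \emph{if $X$ is $T_1$ and not $[\alpha,\alpha]$-compact, then $X$ is not $[\alpha+1,\alpha+1]$-compact.}

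By Lemma~\ref{irredundant}, non-$[\alpha,\alpha]$-compactness is witnessed by an \emph{irredundant} cover $(O_\delta)_{\delta\in\alpha^*}$ with $\alpha^*\le\alpha$ admitting no subcover of order type $<\alpha$; since the full cover would otherwise be such a subcover, necessarily $\alpha^*=\alpha$, and irredundancy forces $O_0\neq\emptyset$, so I may fix $x_0\in O_0$. Using the $T_1$ property of Definition~\ref{t1surrogate} I set $P_\delta=O_\delta\setminus\{x_0\}\in\tau$ for $\delta<\alpha$ and $P_\alpha=O_0$, obtaining a cover $(P_\xi)_{\xi\in\alpha+1}$ of $X$. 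I then claim it has no subcover of order type $\le\alpha$. Indeed, if $H\subseteq\alpha+1$ with $\bigcup_{\xi\in H}P_\xi=X$, then since $x_0\notin P_\delta$ for every $\delta<\alpha$ while $x_0\in P_\alpha$, we must have $\alpha\in H$; and because $x_0\in O_0=P_\alpha$, covering by $(P_\xi)_{\xi\in H}$ is equivalent to $O_0\cup\bigcup_{\delta\in H\cap\alpha}O_\delta=X$, i.e.\ to $\{0\}\cup(H\cap\alpha)$ being a covering subset of the original cover. By hypothesis that set has order type $\ge\alpha$, hence exactly $\alpha$; since prepending $0$ to $H\cap\alpha$ alters the order type at most by a leading $+1$, which is absorbed because $\alpha$ is infinite (if $1+\beta=\alpha$ with $\beta\le\alpha$ then $\beta=\alpha$), it follows that $H\cap\alpha$ itself has order type $\alpha$, whence $H$ has order type $\alpha+1$. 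Thus every subcover is the whole sequence, and $X$ is not $[\alpha+1,\alpha+1]$-compact.

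The main obstacle is precisely this order-type bookkeeping in the reverse direction of (1): one must arrange the removed point and the extra top slot so that \emph{prepending} an index (rather than inserting one in the middle) is what happens, since a middle insertion could raise a type-$(\alpha-1)$ subset to type $\alpha$ when $\alpha$ is a successor and would break the argument. Pinning the auxiliary set at index $0$ (legitimate because the cover is irredundant, so $O_0\neq\emptyset$) is exactly what makes the $1+\beta=\alpha$ absorption available and is the one place where the infinitude of $\alpha$ is used. Everything else is routine assembly via Proposition~\ref{simple} and Corollary~\ref{cortransfer}(1).
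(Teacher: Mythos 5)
Your proposal is correct and takes essentially the same approach as the paper: both reduce (2)--(4) to (1) via Proposition \ref{simple}, prove the easy direction of (1) from Corollary \ref{cortransfer}(1) and Proposition \ref{simple}(1), and prove the hard direction by the same device --- use Lemma \ref{irredundant} to ensure $O_0 \neq \emptyset$, delete a point $x_0 \in O_0$ from the remaining sets (legitimate by $T_1$), place $O_0$ at the top index $\alpha$ of an $(\alpha+1)$-indexed cover so that every subcover must use that index, and absorb the extra index by $1+\beta=\beta$ for infinite $\beta$. The only difference is that you run the construction contrapositively (turning a witness of $[\alpha,\alpha]$-incompactness into a witness of $[\alpha+1,\alpha+1]$-incompactness), whereas the paper argues directly, rearranging an arbitrary $\alpha$-indexed cover via a bijection $f\colon \alpha+1\to\alpha$ and extracting a subcover of order type $<\alpha$; this is an immaterial mirror image of the same argument.
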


 \begin{proof} 
(1)  One implication follows from Corollary \ref{cortransfer}(1)
and Proposition \ref{simple}(1). 

On the other hand, suppose that $X$ is
$[ \alpha +1, \alpha+1 ]$-\brfrt compact and let
$(O_ \delta ) _{ \delta \in \alpha } $ be a cover of $X$.
Without loss of generality, e. g., by Lemma \ref{irredundant},
we can suppose that 
$O_0 \not= \emptyset $. 
Let $x \in O_0$, 
and, for $ \delta \in \alpha$ with
$ \delta > 0$,
let 
$O'_ \delta = O _ \delta \setminus \{ x \} $.
Since $(X, \tau)$ is assumed to be $T_1$,
each $O'_ \delta $ still belongs to $\tau$.
Moreover,   
$(O'_ \delta ) _{ \delta \in \alpha } $ is still a cover of $X$.
Notice that every subcover of 
$(O'_ \delta ) _{ \delta \in \alpha } $ 
must contain 
$O_0$, which is the only element of the cover containing 
$x$. 

Rearrange $(O'_ \delta ) _{ \delta \in \alpha } $
as $(U_ \delta ) _{ \delta \in \alpha +1} $ by letting
$U_ \delta =O' _{ f(\delta)}$, where
$f: \alpha +1\to \alpha $ is the bijection defined by
\[ 
f( \delta) =\begin{cases}
\delta +1 &    \text{if  $\delta < \omega   $},\\
\delta &    \text{if  $ \omega \leq \delta < \alpha    $},\\
 0 &    \text{if  $\delta = \alpha  $}.\\
\end{cases}
\] 

By applying $[ \alpha +1, \alpha+1 ]$-\brfrt compactness
to 
$(U_ \delta ) _{ \delta \in \alpha +1} $,
we get 
$H \subseteq  \alpha +1$
such that $H$ has order type $<\alpha+1$,
and    $(U_ \delta ) _{ \delta \in H} $ is a cover.
Since $U_ \alpha =O'_0$,
and $O'_0$ is the only element of the cover
containing $x$, we have that   
 $U_ \alpha$ belongs to the subcover, that is,
$\alpha \in H$. 
Since $H$ has order type $<\alpha+1$,
then necessarily 
$H \cap \alpha $ has order type $<\alpha$.
Since $f _{| \alpha } $ is order-preserving,
then also  
$ f ^{-1} (H \cap \alpha) $
has order type $<\alpha$.
Hence 
$ K= f ^{-1} (H ) $, too, 
has order type $<\alpha$,
since $\alpha$ is infinite, and we are adding
to $ f ^{-1} (H \cap \alpha) $
just one element 
``at the beginning''. 

Then 
$(O'_ \delta ) _{ \delta \in K } $ is a cover of $X$
indexed by a set of order type $<\alpha$,
and also $(O_ \delta ) _{ \delta \in K } $ is a cover,
since 
$O' _ \delta \subseteq O _ \delta $,
for every  $ \delta \in \alpha$.
Hence,
$(O_ \delta ) _{ \delta \in K } $ is a subcover
of order type $<\alpha$ of our original cover
$(O_ \delta ) _{ \delta \in \alpha } $,
and we have proved  
$[ \alpha , \alpha]$-\brfrt compactness.

(2) - (4) are immediate from (1) and Proposition \ref{simple}.
\end{proof} 

Of course, Item 1 in Proposition \ref{basict1}
is false without the assumption that $\alpha$ is infinite. Indeed, the discrete space with
exactly $n$ elements is $[n+1,n+1]$-\brfrt compact, but not $[n,n]$-\brfrt compact.

The next Lemma captures a very useful consequence of 
being $T_1$. 

\begin{lemma} \labbel{t1lem1} 
Suppose that $\alpha$ is an ordinal, $ \cf \alpha= \omega $,
and $(\alpha_n) _{n \in \omega } $
is a strictly increasing sequence 
such that
$\sup_{n \in \omega } \alpha_n = \alpha  $.

If $X$ is $T_1$ and not 
$[ \alpha , \alpha ]$-\brfrt compact,
then there is a counterexample 
$(O_ \delta ) _{ \delta \in \alpha} $  
to the
$[ \alpha , \alpha ]$-\brfrt compactness of $X$ 
with the property that, for every $ n \in \omega$,
$O _{ \alpha _n} $ is indispensable (Definition \ref{indisp}).
\end{lemma}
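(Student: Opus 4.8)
The plan is to begin from a carefully chosen counterexample to $[\alpha,\alpha]$-compactness and then surgically delete points to force the sets at positions $\alpha_n$ to be indispensable, using the $T_1$ hypothesis to guarantee that the modified sets still lie in $\tau$. First I would apply Lemma \ref{irredundant}: since $X$ is not $[\alpha,\alpha]$-compact, there is an \emph{irredundant} counterexample, that is, a cover $(P_\gamma)_{\gamma\in\alpha^*}$ with $\alpha^*\le\alpha$, with no subcover of order type $<\alpha$, and with $P_\gamma\not\subseteq\bigcup_{\varepsilon<\gamma}P_\varepsilon$ for every $\gamma$. A one-line observation forces $\alpha^*=\alpha$: if $\alpha^*<\alpha$, then the full index set $\alpha^*$ is itself a subcover of order type $<\alpha$, contradicting the choice. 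By irredundancy, for each $n\in\omega$ I fix a point
\[
x_n\in P_{\alpha_n}\setminus\bigcup_{\varepsilon<\alpha_n}P_\varepsilon .
\]

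The core construction is to delete from each $P_\gamma$ exactly those $x_n$ whose home index $\alpha_n$ lies strictly below $\gamma$:
\[
O_\gamma=P_\gamma\setminus\{\,x_n\mid \alpha_n<\gamma,\ n\in\omega\,\}.
\]
The crucial point, and the one place where the hypotheses $\cf\alpha=\omega$ and cofinality of $(\alpha_n)$ enter, is that for every fixed $\gamma<\alpha$ only finitely many $\alpha_n$ satisfy $\alpha_n<\gamma$, since the sequence is strictly increasing with supremum $\alpha>\gamma$. Thus the deleted set is finite, and $O_\gamma\in\tau$ by the $T_1$ assumption (Definition \ref{t1surrogate}).

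Then I would verify three things. (i) $(O_\gamma)_{\gamma\in\alpha}$ is still a cover: the only points ever removed are the $x_n$, and each $x_n$ survives in $O_{\alpha_n}$, so no point of $X$ is lost. (ii) Each $O_{\alpha_n}$ is indispensable, witnessed by $x_n$ in the sense of Definition \ref{indisp}: for $\gamma<\alpha_n$ the choice of $x_n$ gives $x_n\notin P_\gamma\supseteq O_\gamma$; for $\gamma>\alpha_n$ the point $x_n$ is explicitly deleted; and $x_n$ is \emph{not} deleted from $O_{\alpha_n}$ itself, because $x_n\ne x_m$ for $m<n$ (indeed $x_m\in P_{\alpha_m}\subseteq\bigcup_{\varepsilon<\alpha_n}P_\varepsilon$, while $x_n$ avoids that union). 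Hence $x_n\in O_{\alpha_n}$ and $x_n\notin O_\gamma$ for every $\gamma\ne\alpha_n$. (iii) $(O_\gamma)_{\gamma\in\alpha}$ is still a counterexample: since $O_\gamma\subseteq P_\gamma$, any $H\subseteq\alpha$ for which $(O_\gamma)_{\gamma\in H}$ is a cover also makes $(P_\gamma)_{\gamma\in H}$ a cover, so by the choice of $(P_\gamma)$ the set $H$ has order type $\ge\alpha$.

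I expect the only genuinely delicate point to be the finiteness of the deletion set, which is exactly what reconciles making infinitely many sets indispensable with the fact that the $T_1$ property only licenses removing finitely many points at a time; everything else is bookkeeping. It is worth noting that the indispensability is not needed to re-establish the counterexample property — the inclusion $O_\gamma\subseteq P_\gamma$ secures that for free.
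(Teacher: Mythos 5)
Your proof is correct and follows essentially the same route as the paper's: an irredundant counterexample obtained from Lemma \ref{irredundant}, witnesses $x_n \in O_{\alpha_n}\setminus\bigcup_{\varepsilon<\alpha_n}O_\varepsilon$, and deletion from each set of the finitely many witnesses with smaller index, licensed by the $T_1$ hypothesis. If anything, you are slightly more explicit than the paper, which glosses over both the reduction to index set exactly $\alpha$ (your $\alpha^*=\alpha$ observation) and the check that $x_n$ is not itself deleted from $O_{\alpha_n}$.
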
 

\begin{proof}
Let $\alpha$ and the $\alpha_n$'s be given. 
Suppose that
$(O_ \delta ) _{ \delta \in \alpha } $  
is a counterexample to
$[ \alpha, \alpha ]$-\brfrt compactness.
By Lemma \ref{irredundant},
we can also suppose that,
for every $\delta <  \alpha $, 
$O_ \delta $ is not contained in 
 $ \bigcup  _{ \varepsilon < \delta  } O_ \varepsilon  $.
In  particular, for every 
$ n \in \omega $,
we can choose 
$x_n \in O _{ \alpha _n} $ such that
 $ x_n \not\in \bigcup  _{ \varepsilon < \alpha _n  } O_ \varepsilon  $.
Define $(O'_ \delta ) _{ \delta \in \alpha } $  as follows.
\[ 
O'_ \delta =\begin{cases}
O_ \delta &    \text{if  $ \delta \leq \alpha _0$},\\
O_ \delta \setminus \{ x_0, \dots, x_n \} &    \text{if  $  \alpha _n < \delta \leq \alpha _{n+1}  $}.\\
\end{cases}
\] 
Since $X$ is $T_1$,
each $O'_ \delta $ still belongs to $\tau$.
Moreover, 
$(O'_ \delta ) _{ \delta \in \alpha } $
is still a cover of $X$. Indeed,
for every $ n \in \omega$,  $x_n \in O' _{ \alpha _n} $. If $x$ is not
one of the  $x_n$'s, then $x \in O_ \delta $, for some $ \delta \in  \alpha $,
 and also  
$x \in O' _ \delta $. 
Since 
$O'_ \delta \subseteq O_ \delta $,
for every $\delta \in \alpha $,  
we have that
$(O'_ \delta ) _{ \delta \in  \alpha } $, too,
is a counterexample to
$[ \alpha, \alpha ]$-\brfrt compactness, and
it is easy to see that  
$(O' _{ \alpha _n} ) _{ n \in \omega  } $
 is a set of indispensable elements. Thus, 
$(O'_ \delta ) _{ \delta \in  \alpha } $ is a cover as wanted.
 \end{proof} 

Many results on $T_1$ spaces will be obtained be rearranging
the indispensable elements given by Lemma \ref{t1lem1}.

The following notation shall be useful in the proof of 
the forthcoming Theorem  \ref{t1}.

\begin{definition} \labbel{bestaast}
If $\beta$ is any ordinal, let  $\beta ^{*} $
  be the smallest ordinal $ \leq \beta$  
such that $|[ \beta ^{*} , \beta ]| \leq \omega $.
Thus, $\beta ^{*} $ is the largest ordinal $ \leq \beta$
which is either $ 0$, or  has uncountable cofinality,
or has cofinality $ \omega$ but can be written as a limit of ordinals of uncountable cofinality.
 \end{definition}   

\begin{theorem} \labbel{t1}
Suppose that  $X$ is  $T_1$, and $ \beta $ is an ordinal of
cofinality $ \omega$. Then
the following conditions are equivalent.
\begin{enumerate}
\item
$X$  is  $[ \beta , \beta   ]$-\brfrt compact.
\item
$X$  is  $[ \beta  +\alpha, \beta   +\alpha ]$-\brfrt compact, for
every ordinal  $\alpha $ with $ | \alpha | \leq  \omega $.  
\item
$X$  is  $[ \beta  +\alpha, \beta   +\alpha ]$-\brfrt compact, for
some ordinal  $\alpha $ with $ | \alpha | \leq  \omega $.
\item
$X$  is  $[ \beta ,  \beta   + \omega _1 )$-\brfrt compact.
 \end{enumerate}  
\end{theorem}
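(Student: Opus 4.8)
The plan is to run the cycle $(1)\Rightarrow(4)\Rightarrow(2)\Rightarrow(3)\Rightarrow(1)$ and isolate the two genuinely topological steps. Since $\cf\beta=\omega$, $\beta$ is a limit ordinal, so $\beta^\ell=\beta$. First I dispose of the routine links. The implication $(4)\Rightarrow(2)$ is immediate from Proposition~\ref{simple}(1): if $X$ is $[\beta,\beta+\omega_1)$-compact and $\alpha$ is countable then $\beta+\alpha<\beta+\omega_1$, so $X$ is $[\beta,\beta+\alpha]$-compact, hence $[\beta+\alpha,\beta+\alpha]$-compact. Conversely every $\gamma$ with $\beta\le\gamma<\beta+\omega_1$ has the form $\gamma=\beta+\alpha$ for a unique countable $\alpha$, so $(2)$ gives $[\gamma,\gamma]$-compactness for all such $\gamma$, and Proposition~\ref{simple}(2) then yields $[\beta,\beta']$-compactness for every $\beta'<\beta+\omega_1$, i.e.\ $(4)$; thus $(2)\Leftrightarrow(4)$. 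Finally $(2)\Rightarrow(3)$ is a trivial specialization. It therefore remains to prove $(1)\Rightarrow(2)$ and $(3)\Rightarrow(1)$, and throughout I may, by Proposition~\ref{basict1}(1), absorb finite tails freely and so reduce to the case where the ordinal $\alpha$ added to $\beta$ is a \emph{limit} ordinal (the case of finite $\alpha$ being handled directly by iterating Proposition~\ref{basict1}(1)).

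Both remaining implications come from a single engine: reshaping a counterexample across a countable gap in length. For $(1)\Rightarrow(2)$ I argue contrapositively, proving that if $X$ is $T_1$ and fails to be $[\beta+\alpha,\beta+\alpha]$-compact for some countable limit $\alpha$, then it fails to be $[\beta,\beta]$-compact. Here $\cf(\beta+\alpha)=\omega$, so Lemma~\ref{t1lem1}, applied with a cofinal sequence $(\beta+\alpha_k)_{k<\omega}$ with $\alpha_k\uparrow\alpha$, yields a witnessing cover $(O_\delta)_{\delta<\beta+\alpha}$ in which each $O_{\beta+\alpha_k}$ is indispensable (Definition~\ref{indisp}) and which has no subcover of order type $<\beta+\alpha$. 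Using $\cf\beta=\omega$, I fix a countably infinite $A\subseteq\beta$ cofinal in $\beta$ with $\beta\setminus A$ still of order type $\beta$, a bijection $g\colon\beta\to\beta+\alpha$ carrying $\beta\setminus A$ order-isomorphically onto $[0,\beta)$ and $A$ onto the countable tail $[\beta,\beta+\alpha)$, and set $P_\varepsilon=O_{g(\varepsilon)}$. Any subcover $(P_\varepsilon)_{\varepsilon\in H'}$ pulls back to a subcover $(O_\delta)_{\delta\in g(H')}$, which must have order type $\beta+\alpha$; the point is then to force its restriction to $[0,\beta)$ to have full order type $\beta$, so that the order isomorphism on $\beta\setminus A$ drives $H'$ up to order type $\beta$ and $(P_\varepsilon)_{\varepsilon<\beta}$ has no short subcover. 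The step $(3)\Rightarrow(1)$ is the exact dual: starting from $[\beta+\alpha,\beta+\alpha]$-compactness and an arbitrary cover of length $\beta$ (irredundant by Lemma~\ref{irredundant}), I generalize the single-point relocation of Proposition~\ref{basict1}(1) by trimming, via the $T_1$ property, along a cofinal sequence $\beta_n\uparrow\beta$ to make the $O_{\beta_n}$ indispensable, reindexing the trimmed cover to length $\beta+\alpha$ so these $\omega$ indispensable sets fill the countable tail, applying the hypothesis to get a subcover of order type $<\beta+\alpha$, and reading off a subcover of the original of order type $<\beta$.

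The step I expect to be the main obstacle is the order-type bookkeeping common to both constructions: controlling absorption. A priori a subcover of the reshaped cover could be short, losing length $\beta$ in the head only to recover it in the countable tail (for instance when $\beta=\omega\cdot 2$ and $\alpha\ge\omega^2$, where $\beta'+\alpha=\beta+\alpha$ is possible with $\beta'<\beta$), and dually the cofinal set of forced indispensable indices, having order type $\omega$, can inflate the back-image of a short subcover up to $\beta$ when $\beta$ is not additively indecomposable. The resolution is to distribute the reshaping across the Cantor normal form blocks of $\beta$, exactly as in Lemma~\ref{lemshifted}: one arranges the cofinal indispensables block by block so that in each block either the head or the reassigned part carries the full order type $\omega^{\eta_i}$, which forces every subcover to attain the full length and simultaneously keeps back-images genuinely short. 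It is precisely here that $\cf\beta=\omega$, the countability of $\alpha$, and the $T_1$ hypothesis are used together; that no blind combinatorial reindexing suffices is guaranteed by Theorem~\ref{iff1}, since $(3)\Rightarrow(1)$ fails for general spaces, and the quantitative content is that the bound $[\beta,\beta+\omega^2)$ obtained for arbitrary spaces from Corollary~\ref{cortransfer}(3) is upgraded, under $T_1$, all the way to $\beta+\omega_1$.
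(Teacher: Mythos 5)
Your reduction to the two implications $(1)\Rightarrow(2)$ and $(3)\Rightarrow(1)$ is fine, and you have correctly located the crux in the absorption of order types; but the fix you propose for absorption cannot be implemented, and this is a genuine gap. Take $\beta=\omega\cdot2$. In your direct proof of $(3)\Rightarrow(1)$, the subcover you finally read off is indexed by $H\cup\{\beta_n\}$, since the indispensable positions are always forced into any subcover. Now, whatever infinite set $\{\beta_n\}\subseteq\omega\cdot2$ with $\omega\cdot2\setminus\{\beta_n\}$ of order type $\omega\cdot2$ you choose, it meets one of the two blocks $[0,\omega)$, $[\omega,\omega\cdot2)$ in an infinite set, and the other block then contains an infinite $H\subseteq\omega\cdot2\setminus\{\beta_n\}$ of order type $\omega$ with $H\cup\{\beta_n\}$ of order type $\omega+\omega=\beta$; so your construction outputs a subcover of full length $\beta$ and proves nothing. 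Your proposed remedy, distributing the indispensables over the Cantor normal form blocks, is unavailable under plain $T_1$: the trimming behind Lemma~\ref{t1lem1} removes from each $O_\delta$ the points attached to indispensable positions below $\delta$, and $T_1$ licenses only \emph{finite} removals, so the set of indispensable positions must be finite below every index; hence all but finitely many of them lie in the final block, which is exactly the configuration in which the inflation above occurs (making infinitely many positions indispensable inside a non-final block would require the $\lambda$-$T_1$ property of Definition~\ref{t1surrogatelambda}). Dually, your squeeze-down proof of $(1)\Rightarrow(2)$ breaks for $\beta=\omega\cdot2$, $\alpha=\omega^2$: a subcover of the given $(\beta+\alpha)$-counterexample may have trace of order type $\omega$ on $[0,\omega\cdot2)$ and full trace on the tail, which is consistent with the counterexample property because $\omega+\omega^2=\omega\cdot2+\omega^2=\omega^2$; so the reshaped cover of length $\beta$ is not shown to be a counterexample. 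These instances are not marginal: by Theorem~\ref{iff1} and Remark~\ref{w+ww2}, $[\omega\cdot2,\omega\cdot2]$-compactness does not imply $[\omega^2,\omega^2]$-compactness for arbitrary spaces, so no reindexing carried out at the level of $\beta$ itself can settle them.

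What is missing is the staging device of the paper's proof, namely $\beta^*$ of Definition~\ref{bestaast}. The paper performs counterexample manipulations only at the core ordinals $\beta=\beta^*+\omega$ (Claim 1) and $\beta=\beta^*$ with $\cf\beta^*=\omega$ (Claim 2), where absorption provably cannot occur: every final segment of $\beta^*$ has order type at least $\omega_1$, so no countable ordinal can absorb it (in Claim 1 the indispensables fill the \emph{entire} segment $[\beta^*,\beta)$ and a short trace on $\beta^*$ forces a short total order type; in Claim 2 one uses that $\beta^*$ is a limit of ordinals of uncountable cofinality, so adjoining an $\omega$-sequence to a set of order type $<\beta^*$ keeps the order type $<\beta^*$). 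The general $\beta$ of cofinality $\omega$ is then treated by the purely formal observation that the theorem for $\beta'$ implies the theorem for $\beta'+\gamma$ whenever $|\gamma|\le\omega$. In particular, for $\beta=\omega\cdot2$ the actual proof descends to the core $\omega=\beta^*+\omega$: from a counterexample at $\beta+\alpha$ one obtains a counterexample at $\omega$ by the cardinal-level transfer of Corollary~\ref{cortransfer}(4) (where ``order type $<\omega$'' means finite, so pullbacks cause no absorption), and then $T_1$, via Lemma~\ref{t1lem1} and Proposition~\ref{ord=card}(1), converts an $\omega$-counterexample into one of every countable order type $\ge\omega$, including $\omega\cdot2$. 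Your proposal never descends to such a core ordinal, and the examples above show that without this two-stage structure neither of your constructions can be completed.
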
  

\begin{proof}
(2) $\Leftrightarrow $  (4) follows from Proposition \ref{simple}(4),
hence it is enough to prove the equivalence of (1) - (3).

We shall first prove the theorem  in some particular cases.

\begin{claim1} \labbel{cl1}
Conditions (1) - (3) are equivalent in case 
 $ \beta = \beta ^{*} + \omega $. 
 \end{claim1}   

 \begin{proof}[Proof of Claim 1]
In case $\beta^*=0$, 
(1) $\Rightarrow $  (2) follows from Proposition \ref{simple}(1)
and Corollary \ref{cortransfer}(4) with $\beta= \gamma = \omega $ .

In case $\beta^*>0$, 
(1) $\Rightarrow $  (2) follows from Proposition \ref{simple}(4)
and Corollary \ref{cortransfer}(5), by taking there
$\alpha= \beta ^*$, $\lambda= \omega $ and $\beta= \beta ^* + \omega $.

(2) $\Rightarrow $  (3)  is trivial.

We shall prove (3) $\Rightarrow $  (1) by proving the contrapositive form.

So suppose that 
$X$  is not  $[ \beta  , \beta  ]$-\brfrt compact,
and $\alpha< \omega_1$. We want to show that
$X$  
is not  $[ \beta  + \alpha , \beta  + \alpha  ]$-\brfrt compact.
For $n < \omega$, let $\alpha_n=  \beta ^* + n$.
Since $\beta= \beta^*   + \omega $, then, by  Lemma \ref{t1lem1},    
there is  some cover
$(O_ \delta ) _{ \delta \in  \beta  } $
witnessing 
$[ \beta   , \beta  ]$-\brfrt incompactness,
and such that each 
$O _{ \alpha _n}$ is indispensable.
If $\beta^*=0$, then 
$[ \beta  + \alpha  , \beta + \alpha  ]$-\brfrt incompactness
follows from Proposition \ref{ord=card}(1), hence in what follows
let us suppose $\beta^*>0$.

For  every $H \subseteq \beta = \beta^*   + \omega$,
if  $(O_ \delta ) _{ \delta \in  H} $
is a cover of $X$, then 
the order type of $H$ is $ \beta =\beta ^* + \omega$,
hence the order type of 
  $H \cap \beta ^* $
 is $\beta ^* $,
since $\beta ^* $ is a limit ordinal.
Moreover, 
  $H \cap [\beta ^*, \beta   ) = [\beta ^*, \beta )$,
since $O_ \delta $ is indispensable,
for every $\delta \in [\beta ^*, \beta   )$.

Let $f:  \beta ^* + \omega + \alpha \to \beta ^* + \omega $   
be a bijection which is the identity on $ \beta ^* $,
and let 
$(U_ \varepsilon  ) _{ \varepsilon  \in  \beta ^* + \omega + \alpha } $
be defined by 
$U _ \varepsilon = O _{ f  ( \varepsilon ) } $. 
We claim that 
$(U_ \varepsilon  ) _{ \varepsilon  \in  \beta ^* + \omega + \alpha } $
witnesses  that
$X$  
is not  $[ \beta ^* + \omega + \alpha , \beta^*   + \omega + \alpha  ]$-\brfrt compact,
and this is what we want, since 
$ \beta ^* + \omega + \alpha  = \beta + \alpha $.
Indeed, if 
$ K \subseteq  \beta^*   + \omega + \alpha  $, and 
$(U_ \varepsilon  ) _{ \varepsilon  \in  K} $
is a cover of $X$, then 
$(O_ \delta ) _{ \delta \in  H} $, with $H = f (K) $, 
is a cover of $X$. 
Since $f$ 
is the identity on $ \beta ^* $, then, by the above mentioned properties
of $H$, we get that 
the order type of 
  $K \cap \beta ^* $
equals the order type of 
$H \cap \beta ^* $,
that
 is, $\beta ^* $;
moreover, 
$K \cap [\beta ^*, \beta ^*+ \omega + \alpha ) = [\beta ^*, \beta ^*+ \omega + \alpha )$,
 thus $K$ has order type
$\beta ^*+ \omega + \alpha $,
hence  $[ \beta ^* + \omega + \alpha , \beta^*   + \omega + \alpha  ]$-\brfrt incompactness
is proved.
 \qedhere$_ {Claim\ 1}$ 
 \end{proof}

\begin{claim2} \labbel{cl2}
Conditions (1) - (3) are equivalent in the case 
 when $\beta ^{*} $ has cofinality $ \omega$, 
and  $ \beta = \beta ^{*} $.
 \end{claim2}   

\begin{proof}[Proof of Claim 2]
In view of  Claim 1, and of 
Proposition \ref{basict1}(1), it is
enough to show that if 
 $ \cf \beta ^{*} = \omega $, then 
 $[ \beta ^*, \beta ^*  ]$-\brfrt compactness
is equivalent to 
 $[ \beta ^* + \omega , \beta ^* + \omega  ]$-\brfrt compactness.
The former implies the latter because of Corollary 
\ref{cortransfer}(3) (taking $\beta= \alpha = \beta ^*$ there),
by Proposition \ref{simple}(4), and  
 since we have assumed that
$\cf \beta ^{*} = \omega$. 
We shall prove the reverse implication by contraposition.
Suppose that $X$
is not
 $[ \beta ^* ,  \beta ^*  ]$-\brfrt compact.
We want  to show that $X$ is not
  $[ \beta ^* + \omega ,  \beta ^* + \omega  ]$-\brfrt compact,
Choose some  strictly increasing sequence 
$(\alpha_n) _{n \in \omega } $
cofinal in $\beta ^* $. This is possible, since
$\cf \beta ^{*} = \omega$.
By Lemma 
\ref{t1lem1}, there is a counterexample  $(O_ \delta ) _{ \delta \in  \beta ^* } $
 to $[ \beta ^*   \beta ^*  ]$-\brfrt compactness
such that each $O _{ \alpha _n} $ is indispensable.
Thus, if
$H \subseteq \beta ^*$ and  
 $(O_ \delta ) _{ \delta \in  \beta ^* } $ is a cover of $X$, 
then $H$ has order type $\beta ^*$, and
moreover
$\alpha_n \in H$, for every $ n \in \omega$.

Let $A= (\beta ^* + \omega) \setminus \{ \alpha _ n \mid n \in \omega \} $.
$A$ has order type $\beta ^* + \omega$, since
$\beta ^{*} $ is expressible as a limit of ordinals of uncountable cofinality,
hence taking off a sequence of order type $ \omega$ does not alter the order type
of $\beta ^{*} $.
Let $(O'_ \delta ) _{ \delta \in  A } $
be defined by
$O'_ \delta = O_ \delta $, if 
$ \delta \in \beta ^*  \setminus \{ \alpha _ n \mid n \in \omega \} $,
and by 
$O' _{ \beta ^* + n}  = O _{ \alpha _n} $,
for $ n \in \omega$.
Since these latter elements of the cover are indispensable, it is
easy to see that $(O'_ \delta ) _{ \delta \in  A } $ is a counterexample to 
$[ \beta ^* + \omega ,  \beta ^* + \omega  ]$-\brfrt compactness.
 \qedhere$_ {Claim\ 2}$ 
 \end{proof}

\noindent
 \emph{Proof of  Theorem    \ref{t1} (continued).} 
Summing up, we have proved the theorem  in the case when 
either 
  \begin{enumerate}    
\item $ \beta = \beta ^{*}+ \omega  $, or 
\item $ \beta = \beta ^{*} $ and $ \cf \beta ^{*} = \omega$.
 \end{enumerate} 

Now let $\beta$ be arbitrary.
By definition, $\beta \geq \beta ^{*}$, and, since we have assumed $ \cf \beta= \omega $,
we have further that,  if  $ \cf \beta ^{*} > \omega$, then
$ \beta \geq \beta ^{*}+ \omega  $. 
Notice also that, by definition, there is $\gamma$ with $| \gamma | \leq \omega $
such that  $ \beta = \beta ^{*}+ \gamma  $ and, if 
$ \cf \beta ^{*} > \omega$, then, by above, there is $\gamma'$ with 
$| \gamma' | \leq \omega $ such that $ \beta = \beta ^{*}+ \omega +\gamma'  $.

Now observe that, if the statement of the theorem  holds for some given ordinal $\beta'$ in place of $\beta$, and $\beta''$ is another ordinal such that $\beta''= \beta ' + \gamma $, for some $\gamma$ with
$| \gamma | \leq \omega $, then 
 the statement of the theorem  holds for $\beta''$ in place of $\beta$, too.

The above observations show that the two already proved particular cases 
(1) and (2) imply the statement of the theorem  in its full generality.
\end{proof} 

\begin{remark} \labbel{rmkcfw}   
(a) The assumption that $\beta$ has cofinality
$ \omega$ in Theorem  \ref{t1} is necessary.
By Example \ref{exex}(3),
if $\kappa$ is regular and uncountable,
then $(\kappa, \ord)$ 
is $[ \kappa +\omega , \kappa + \omega  ]$-\brfrt compact,
but not 
$[ \kappa , \kappa   ]$-\brfrt compact,
hence the implication
 (3)  $\Rightarrow $  (1)
 in the statement of Theorem \ref{t1}
 fails, for $\beta= \kappa $
and $\alpha= \omega $.  

(b) On the other hand, for $\beta\geq \omega $, and $T_1$ spaces, the implication
 (1)  $\Rightarrow $  (2) 
 in  Theorem \ref{t1}
always holds, even without the
assumption that $\beta$ has cofinality
$ \omega$.
Indeed, by Proposition \ref{basict1}(4),
$[ \beta  , \beta   ]$-\brfrt compactness
implies 
$[ \beta ^\ell , \beta ^\ell  ]$-\brfrt compactness, thus,
without loss of generality, we can suppose that $\beta$ 
is limit.
Then, for every $\alpha < \omega_1 $, we get 
$[ \beta + \alpha  , \beta + \alpha   ]$-\brfrt compactness:
 this follows from Theorem \ref{t1} itself, in case 
$\cf \beta= \omega $,
and from Corollary \ref{cortransfer}(3)
and Proposition \ref{simple}(1),
if $\cf \beta > \omega $.

(c) On the contrary, 
the implication
 (1)  $\Rightarrow $  (2) 
 in the statement of Theorem \ref{t1}
fails, in general, for non $T_1$ spaces.
See, for example, the first example in Remark 
\ref{w+ww2}, with $\kappa= \omega $.

(d) Also 
the implication
 (3)  $\Rightarrow $  (1) 
 in the statement of Theorem \ref{t1}
fails, in general, for non $T_1$ spaces. Just consider
Example \ref{exex}(2), and take 
$\beta= \kappa = \omega $ and arbitrary 
$\alpha>1$. 
 \end{remark}

\begin{corollary} \labbel{t1cor} 
Suppose that $X$ is  $T_1$. Then 
 $X$  
is $[ \omega , \omega  ]$-\brfrt compact if and only if
$X$  is $[ \alpha , \alpha   ]$-\brfrt compact, for some
(equivalently, every) countably infinite ordinal $\alpha$, if and only if
$X$ is $[ \omega , \omega_1  )$-\brfrt compact.
 \end{corollary}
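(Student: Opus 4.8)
The plan is to obtain this corollary as the special case $\beta=\omega$ of Theorem \ref{t1}. Since $\cf\omega=\omega$, the hypothesis of Theorem \ref{t1} is met, so its Conditions (1)--(4) are equivalent for the given $T_1$ space $X$. First I would identify the two endpoints of the spectrum: Condition (1) is literally $[\omega,\omega]$-compactness, and Condition (4) is $[\omega,\omega+\omega_1)$-compactness. The arithmetic identity $\omega+\omega_1=\omega_1$ (which holds because $\omega_1$ has uncountable, hence $>\omega$, cofinality) then shows that Condition (4) is exactly $[\omega,\omega_1)$-compactness. This already yields the equivalence of the first and the last of the three properties listed in the corollary.

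The remaining task is to match the middle property---namely $[\alpha,\alpha]$-compactness for countably infinite $\alpha$---with Conditions (2) and (3). The key observation is that every countably infinite ordinal $\gamma$ can be written uniquely as $\gamma=\omega+\alpha$ by ordinal subtraction (as $\omega\leq\gamma$), and that $\alpha\leq\omega+\alpha=\gamma$ forces $|\alpha|\leq\omega$; conversely $\omega+\alpha$ with $|\alpha|\leq\omega$ runs through precisely the countably infinite ordinals. Thus ``$X$ is $[\gamma,\gamma]$-compact for some countably infinite $\gamma$'' is exactly the disjunction of $[\omega,\omega]$-compactness (the case $\gamma=\omega$, i.e.\ $\alpha=0$) and Condition (3) (the cases $\gamma>\omega$, i.e.\ $\alpha\neq 0$); in either case Theorem \ref{t1} returns Condition (1). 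Symmetrically, ``for every countably infinite $\gamma$'' unpacks to $[\omega,\omega]$-compactness together with Condition (2), which is precisely what Condition (1) delivers via (1) $\Rightarrow$ (2). Since ``for every'' trivially implies ``for some'', the three formulations---some $\gamma$, every $\gamma$, and $[\omega,\omega]$-compactness---collapse into a single equivalence class, and combining with the endpoint computation above completes the chain of ``if and only if''s.

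I do not anticipate a genuine obstacle, since the statement is essentially a repackaging of Theorem \ref{t1}; the only points requiring care are the two ordinal-arithmetic facts ($\omega+\omega_1=\omega_1$, and the bijective correspondence $\gamma\leftrightarrow\alpha$ given by $\gamma=\omega+\alpha$), together with the bookkeeping needed to separate the boundary case $\gamma=\omega$ (where $\alpha=0$ falls outside the standing nonzero-ordinal convention that governs Conditions (2) and (3)) from the generic case $\gamma>\omega$.
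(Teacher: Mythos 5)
Your proposal is correct and follows exactly the paper's route: the paper's entire proof is ``take $\beta=\omega$ in Theorem \ref{t1}.'' You have merely spelled out the details the paper leaves implicit---the identity $\omega+\omega_1=\omega_1$, the correspondence between countably infinite $\gamma$ and $\omega+\alpha$ with $|\alpha|\leq\omega$, and the bookkeeping for the boundary case $\gamma=\omega$---all of which are handled correctly.
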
 

\begin{proof}
The corollary follows by taking $\beta= \omega $ in Theorem  \ref{t1}. 
 \end{proof}  

Theorem \ref{t1} can be used to strengthen Proposition \ref{basict1}.  

\begin{definition} \labbel{bestaast2}
Recall from Definition \ref{bestaast} the definition of $\beta^*$.
For an ordinal $\beta$, define $\beta ^{**} $ as follows:
\[ 
\beta ^{**} 
 =\begin{cases}
\beta ^*  &   \text{if either $ \cf \beta ^* = \omega  $, or $ \beta = \beta ^* + n$, for some $n < \omega $},\\
 \beta ^* + \omega   &  \text{otherwise}.\\
\end{cases}
\] 

Notice that $ \beta ^{**} \leq \beta $, for every ordinal $\beta$.
 \end{definition}

\begin{corollary} \labbel{cor**}
Suppose that  $X$ is  $T_1$, and $ \beta \leq \alpha $ are infinite ordinals. Then
the following conditions are equivalent.
\begin{enumerate}
\item
$X$  is  $[ \beta , \alpha  ]$-\brfrt compact.
\item
$X$  is  $[ \beta ^{**} , \alpha    + \omega _1 )$-\brfrt compact.
\item
$X$  is both  $[ \beta ^{**}, \beta ^{**} ]$-\brfrt compact, 
and 
  $[ \gamma  ,  \gamma ]$-\brfrt compact,
for every $\gamma$ such that 
$ \beta \leq \gamma \leq \alpha$ and 
$\gamma= \gamma ^*$.  
 \end{enumerate}  
 \end{corollary}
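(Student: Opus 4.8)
The plan is to prove the cycle $(2)\Rightarrow(1)\Rightarrow(3)\Rightarrow(2)$, with the two middle implications carrying all the content and $(2)\Rightarrow(1)$ being immediate from Proposition \ref{simple}(1): specializing the range in $(2)$ to $\alpha'=\alpha$ gives $[\beta^{**},\alpha]$-compactness, and since $\beta^{**}\leq\beta$ this yields $[\beta,\alpha]$-compactness. The engine of the whole argument is a single reduction lemma: for $T_1$ $X$ and any $\gamma$ with $\cf\gamma=\omega$ one has $[\gamma,\gamma]$-compactness if and only if $[\gamma^{**},\gamma^{**}]$-compactness. This is Theorem \ref{t1} in disguise, since $\gamma^{**}$ has cofinality $\omega$ and, by unwinding Definitions \ref{bestaast} and \ref{bestaast2}, one checks $\gamma=\gamma^{**}+\delta$ for some $\delta$ with $|\delta|\leq\omega$; the equivalence of conditions $(1)$ and $(3)$ of Theorem \ref{t1}, applied with base $\gamma^{**}$, then gives the claim. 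I would also record once and for all the arithmetic facts $(\beta^{\ell})^{*}=\beta^{*}$ and $\beta^{**}=(\beta^{\ell})^{**}$, the observation that $(\zeta+\omega)^{*}=\zeta$ whenever $\cf\zeta>\omega$, and the fact that no $\gamma$ in the open interval $(\alpha,\alpha+\omega_1)$ satisfies $\gamma=\gamma^{*}$ (indeed $\gamma^{*}\leq\alpha<\gamma$ there). Using Proposition \ref{basict1}(4) I first reduce to the case where $\beta$ is a limit ordinal, since replacing $\beta$ by $\beta^{\ell}$ changes neither the truth of $(1)$, nor the value of $\beta^{**}$, nor (after collapsing the single possible star-ordinal in $[\beta^{\ell},\beta)$) the content of $(3)$.

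For $(1)\Rightarrow(3)$ the second clause is immediate: for $\gamma=\gamma^{*}$ with $\beta\leq\gamma\leq\alpha$, Proposition \ref{simple}(2) extracts $[\gamma,\gamma]$-compactness directly from $(1)$. For the first clause, $(1)$ yields $[\beta,\beta]$-compactness, again by Proposition \ref{simple}(2); if $\cf\beta=\omega$ the reduction lemma converts this into $[\beta^{**},\beta^{**}]$-compactness, while if $\cf\beta>\omega$ then $\beta=\beta^{*}=\beta^{**}$ and there is nothing to prove.

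For $(3)\Rightarrow(2)$ I unfold $(2)$ via Proposition \ref{simple}(4) into the family of assertions ``$[\gamma,\gamma]$-compact'' for every $\gamma\in[\beta^{**},\alpha+\omega_1)$, and by Proposition \ref{basict1}(4) I may restrict to limit $\gamma$. Each such $\gamma$ is treated by its cofinality. If $\cf\gamma>\omega$ then $\gamma=\gamma^{*}$ with $\beta^{*}\leq\gamma\leq\alpha$, so $[\gamma,\gamma]$-compactness is either listed in $(3)$ or, when $\gamma$ lands in $[\beta^{**},\beta)$ (which forces $\gamma=\beta^{*}=\beta^{**}$), collapses to the $[\beta^{**},\beta^{**}]$ clause. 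If $\cf\gamma=\omega$, the reduction lemma replaces $[\gamma,\gamma]$-compactness by $[\gamma^{**},\gamma^{**}]$-compactness; when $\gamma^{**}=\gamma^{*}$ this is again a star-ordinal condition supplied by $(3)$, and when $\gamma^{**}=\gamma^{*}+\omega$ (the case $\cf\gamma^{*}>\omega$) I derive it from the star-ordinal condition $[\gamma^{*},\gamma^{*}]$-compact through Corollary \ref{cortransfer}(3), which propagates compactness from $\gamma^{*}$ up through $\gamma^{*}+\omega$. The terms with $\gamma\in(\alpha,\alpha+\omega_1)$ are handled identically: there $\gamma^{*}\leq\alpha$ and $\gamma^{**}\leq\alpha+\omega$, so Corollary \ref{cortransfer}(3) again produces them from the conditions sitting at or below $\alpha$, and they contribute nothing new.

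The main obstacle is the bookkeeping of the boundary terms rather than any single hard idea. Concretely, in each cofinality case I must verify that the parameter surviving the reduction lemma lands exactly in the range governed by $(3)$ — never strictly below $\beta^{**}$ (where nothing would be available) and never strictly between $\alpha$ and $\alpha+\omega_1$ in a manner escaping Corollary \ref{cortransfer}(3). This requires pinning down where $\gamma^{*}$ and $\gamma^{**}$ sit relative to $\beta^{*},\beta^{**},\beta$ and $\alpha$, and it is here that the somewhat ad hoc definition of $\beta^{**}$ earns its keep: it is engineered to be the unique lower boundary condition that is \emph{not} itself of the form $\gamma=\gamma^{*}$, precisely absorbing the ``$+\,\omega$'' discrepancy that the reduction lemma produces at the bottom of the interval.
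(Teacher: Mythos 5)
Your proposal is correct and takes essentially the same route as the paper: the same cycle of implications, with Theorem \ref{t1} (your ``reduction lemma'') and Corollary \ref{cortransfer}(3) bridging the countable intervals, Proposition \ref{basict1} absorbing finite/successor offsets, and the same case split on whether the star ordinal $\gamma^*$ of the ordinal under consideration falls below $\beta$ (forcing it to equal $\beta^*$, with $\beta^{**}$ absorbing the possible $+\omega$ discrepancy) or lies in $[\beta,\alpha]$. The only cosmetic difference is that the paper packages the Theorem \ref{t1}--plus--Corollary \ref{cortransfer}(3) step as Remark \ref{rmkcfw}(b) and propagates compactness upward from $\varepsilon^*$ to each $\varepsilon$, whereas you reduce each $\gamma$ downward to $\gamma^{**}$; these are the same argument.
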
 

\begin{proof}
(1) $\Rightarrow $  (3)
From Proposition \ref{simple}(1) we get  $[ \beta , \beta  ]$-\brfrt compactness.
If $ \cf  \beta ^{**}= \omega $, then  
$[ \beta ^{**}, \beta ^{**} ]$-\brfrt compactness
follows from Theorem \ref{t1}(3) $\Rightarrow $  (1), 
with  $\beta ^{**}$ in place of $\beta$, and  since, by the definitions
of  $\beta ^{*}$ and of $\beta ^{**}$, we have that 
$ \beta = \beta ^{**} + \alpha'$, for some $\alpha'$ with $| \alpha '| \leq \omega $.  
If $ \cf  \beta ^{**} \not = \omega $, then 
 $ \beta = \beta ^{**} + n$, for some $ n <\omega$, and  
$[ \beta ^{**}, \beta ^{**} ]$-\brfrt compactness follows from
Proposition \ref{basict1}(1), since $\beta$ is assumed to be infinite. 
Finally,
  $[ \gamma  ,  \gamma ]$-\brfrt compactness,
for every $\gamma$ such that 
$ \beta \leq \gamma \leq \alpha$,
is trivial, by Proposition \ref{simple}(1). 

In order to prove (3) $\Rightarrow $  (2), 
in view of Proposition \ref{simple}(4), 
it is enough to prove 
  $[ \varepsilon  ,  \varepsilon  ]$-\brfrt compactness,
for every $ \varepsilon $ such that 
$ \beta ^{**}  \leq \varepsilon  < \alpha + \omega _1$.
Let us fix some $\varepsilon$ as above, and let
$\gamma = \varepsilon ^*$. Notice that 
$\gamma = \gamma ^*$, and 
that $\gamma \leq \alpha $, since 
$|[ \alpha , \varepsilon ] | \leq \omega $.    
If $\gamma \geq \beta $,
then, by assumption, we have  
  $[ \gamma   ,  \gamma  ]$-\brfrt compactness,
which implies 
  $[ \varepsilon  ,  \varepsilon  ]$-\brfrt compactness,
by Theorem \ref{t1} and Corollary \ref{cortransfer}(3),
as remarked in Remark \ref{rmkcfw}(b).   
On the other hand, if $\gamma < \beta $,
then $\varepsilon^* = \beta ^*$, since 
$ \beta ^* \leq \beta ^{**}  \leq \varepsilon $,
and $\varepsilon ^* = \gamma < \beta $.
Then  $[ \beta ^{**}, \beta ^{**} ]$-\brfrt compactness
implies   $[ \varepsilon  ,  \varepsilon  ]$-\brfrt compactness,
again by Remark \ref{rmkcfw}(b).

(2) $\Rightarrow $  (1) follows from Proposition \ref{simple}(1), 
since $ \beta ^{**}  \leq \beta $. 
\end{proof}

In particular, the compactness properties of $T_1$ spaces are completely
determined by checking $[ \beta  , \beta  ]$-\brfrt compactness
for 
  \begin{enumerate}   
\item $\beta$ finite,
 \item $\beta= \omega $, 
  \item $ \beta $ of uncountable cofinality  
 \item $\beta= \gamma  + \omega $, for $ \gamma $ of uncountable cofinality, and
 \item $\beta$  of cofinality $ \omega$, but expressible as a limit of ordinals of uncountable cofinality.
 \end{enumerate}

The above statement, and the next corollary as well, follow
from Corollary \ref{cor**} (1) $\Rightarrow $  (3) and the fact that,
for infinite $\beta$, both $\beta ^{*} $  and $\beta ^{**} $ have necessarily
one among the forms (2)-(4).

\begin{corollary} \labbel{lindt1} 
If $X$ is $T_1$, and $\beta$ is  the  Lindel\"of ordinal of $X$, 
then $\beta$ has one of the above 
 forms (1)-(5). In particular,
if $\beta < \omega _1$, then $\beta \leq \omega $.  
\end{corollary}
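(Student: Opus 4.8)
The plan is to reduce everything to the single equality $\beta=\beta^{**}$, where $\beta^{**}$ is as in Definition \ref{bestaast2}, and then to read off the admissible shapes directly from the definitions of $\beta^{*}$ (Definition \ref{bestaast}) and $\beta^{**}$. If the Lindel\"of ordinal $\beta$ is finite we are already in case (1), so I would assume $\beta$ infinite for the rest of the argument.

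The one substantive step is to pass from $[\beta,\infty)$-compactness to $[\beta^{**},\infty)$-compactness. By Definition \ref{lindel}, $X$ is $[\beta,\alpha]$-compact for every $\alpha\geq\beta$. Since $\beta^{**}\leq\beta\leq\alpha$ (the inequality $\beta^{**}\leq\beta$ being noted in Definition \ref{bestaast2}), Corollary \ref{cor**}, implication (1)$\Rightarrow$(2), shows that $X$ is $[\beta^{**},\alpha+\omega_1)$-compact for each such $\alpha$. Any ordinal $\alpha'\geq\beta^{**}$ lies below $\alpha+\omega_1$ for a suitable $\alpha\geq\beta$ (take $\alpha=\max(\alpha',\beta)$), so these half-open intervals exhaust all of $[\beta^{**},\infty)$ and $X$ is $[\beta^{**},\infty)$-compact. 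As $\beta$ is by definition the \emph{smallest} ordinal with this property and $\beta^{**}\leq\beta$, minimality forces $\beta^{**}=\beta$.

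It then suffices to check that, for infinite $\beta$, the ordinal $\beta^{**}$ always has one of the forms (2)--(5); since $\beta=\beta^{**}$ this is exactly the conclusion. I would split on $\beta^{*}$. If $\beta$ is countable, then $\beta^{*}=0$ (no countable ordinal has uncountable cofinality or is a limit of such ordinals); as $\beta$ is infinite it is not $\beta^{*}+n$ for finite $n$, and $\beta^{*}=0$ does not have cofinality $\omega$, so Definition \ref{bestaast2} gives $\beta^{**}=\beta^{*}+\omega=\omega$, which is case (2). This already yields the ``in particular'' clause: a finite Lindel\"of ordinal is trivially $\leq\omega$, and a countably infinite one equals $\omega$. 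If $\beta$ is uncountable, then $\omega_1\leq\beta$ forces $\beta^{*}\geq\omega_1>0$, and by Definition \ref{bestaast} this $\beta^{*}$ either has uncountable cofinality or has cofinality $\omega$ while being a limit of ordinals of uncountable cofinality. In the latter situation $\beta^{**}=\beta^{*}$ is case (5). In the former, Definition \ref{bestaast2} gives either $\beta^{**}=\beta^{*}$, case (3), or $\beta^{**}=\beta^{*}+\omega=\gamma+\omega$ with $\gamma=\beta^{*}$ of uncountable cofinality, case (4).

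The only genuinely load-bearing ingredient is the equality $\beta=\beta^{**}$; once the reduction $[\beta,\infty)$-compactness $\Rightarrow$ $[\beta^{**},\infty)$-compactness is secured through Corollary \ref{cor**}, the remainder is a routine unwinding of Definitions \ref{bestaast} and \ref{bestaast2}. I expect the main point requiring care is the assembly step: verifying that the family of intervals $[\beta^{**},\alpha+\omega_1)$, taken over all $\alpha\geq\beta$, really covers every $\alpha'\geq\beta^{**}$, since it is precisely there that the minimality of the Lindel\"of ordinal is brought to bear.
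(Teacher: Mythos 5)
Your proof is correct and takes essentially the paper's own route: the paper likewise derives this corollary from Corollary \ref{cor**} together with the observation that, for infinite $\beta$, the ordinal $\beta^{**}$ can only assume one of the listed forms, which is exactly your reduction $\beta=\beta^{**}$ followed by the case analysis on $\beta^{*}$. The only difference is cosmetic: you invoke implication (1) $\Rightarrow$ (2) of Corollary \ref{cor**} to obtain $[\beta^{**},\infty)$-compactness and then conclude $\beta=\beta^{**}$ by minimality of the Lindel\"of ordinal, whereas the paper cites (1) $\Rightarrow$ (3) of that same corollary for the same purpose.
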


\begin{remark} \labbel{uncountaredifferent}    
 It follows from Example \ref{exex}(3) that 
the behavior of countable ordinals 
in Theorem  \ref{t1} and Corollary \ref{t1cor}  
constitutes an exceptional case. The situation is radically different for larger cardinals and ordinals, even for normal topological spaces. Indeed,
if $\kappa$ is a regular and uncountable cardinal, then
$(\kappa, \ord)$ is 
$[ \kappa + \kappa , \kappa + \kappa    ]$-\brfrt compact but not
$[ \kappa , \kappa    ]$-\brfrt compact. Thus,
\ref{t1} and  \ref{t1cor} do not hold when $ \omega$ is
replaced by an uncountable cardinal.

As another example, 
the disjoint union of two copies of $(\kappa, \ord)$ is 
$[ \kappa + \kappa + \kappa , \kappa + \kappa  + \kappa   ]$-\brfrt compact, but not
$[ \kappa + \kappa , \kappa + \kappa    ]$-\brfrt compact (see Example \ref{samecard}).
\end{remark}

However, Theorem \ref{t1}  does admit a generalization to larger
cardinals, but only under a somewhat stronger assumption.

\begin{definition} \labbel{t1surrogatelambda} 
If $\lambda$ is an infinite cardinal, we say
 that 
$(X, \tau)$ is $\lambda$-$T_1$ if and only if, 
for every $O \in \tau$, and every $ Z \subseteq X$
with $| Z | < \lambda $,
$O \setminus  Z \in \tau  $.
Thus, $T_1$ is the same as $ \omega$-$T_1$.

If $(X, \tau)$ is a $T_1$ topological space, 
and the intersection of $<\lambda$ open sets of $X$
is still an open set of $X$, then
$(X, \tau)$ is
$\lambda$-$T_1$ in the above sense.
 \end{definition}   

\begin{proposition} \labbel{t1lambda}
Suppose that  $X$ is  $\lambda$-$T_1$, and $ \beta $ is a
limit  ordinal of
cofinality $ \leq \lambda $. Then
the following conditions are equivalent.
\begin{enumerate}
\item
$X$  is  $[ \beta , \beta   ]$-\brfrt compact.
\item
$X$  is  $[ \beta  +\alpha, \beta   +\alpha ]$-\brfrt compact, for
every ordinal  $\alpha$ with $|\alpha| \leq \lambda  $.  
\item
$X$  is  $[ \beta  +\alpha, \beta   +\alpha ]$-\brfrt compact, for
some ordinal  $\alpha$ with $|\alpha| \leq \lambda $.  
\item
$X$  is  $[ \beta ,  \beta   + \lambda ^+)$-\brfrt compact.
 \end{enumerate}  
\end{proposition}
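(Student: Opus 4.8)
The plan is to follow the proof of Theorem~\ref{t1} line by line, systematically replacing $\omega$ by $\lambda$: ``cofinality $\omega$'' becomes ``cofinality $\leq\lambda$'', ``countable'' becomes ``of cardinality $\leq\lambda$'', and ``uncountable cofinality'' becomes ``cofinality $>\lambda$''. The equivalence (2)$\Leftrightarrow$(4) is immediate from Proposition~\ref{simple}(4), and (2)$\Rightarrow$(3) is trivial, so the entire content is the cycle (1)$\Rightarrow$(2)$\Rightarrow$(3)$\Rightarrow$(1). Since $\lambda$-$T_1$ trivially implies $T_1$ (as observed after Definition~\ref{t1surrogatelambda}), every $T_1$ result of this section, in particular Proposition~\ref{basict1}, stays available.

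The decisive new tool is a $\lambda$-$T_1$ analogue of Lemma~\ref{t1lem1}: if $X$ is $\lambda$-$T_1$, $\cf\beta=\mu\leq\lambda$, and $(\beta_i)_{i<\mu}$ is strictly increasing and cofinal in $\beta$, then a counterexample to $[\beta,\beta]$-\brfrt compactness can be replaced by one in which each $O_{\beta_i}$ is indispensable (Definition~\ref{indisp}). I would copy Lemma~\ref{t1lem1}: choose witnesses $x_i\in O_{\beta_i}\setminus\bigcup_{\varepsilon<\beta_i}O_\varepsilon$ and set $O'_\delta=O_\delta\setminus\{x_j:\beta_j<\delta\}$. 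The point is that, since $\mu=\cf\beta$ is regular and $\delta<\beta$, the deleted set has cardinality $<\mu\leq\lambda$, so $\lambda$-$T_1$ keeps $O'_\delta\in\tau$; this is exactly where ``cofinality $\leq\lambda$'' and ``$\lambda$-$T_1$'' dovetail. When the index set has order type \emph{exactly} $\lambda$ one can do better and make every member indispensable, deleting from $O_\varepsilon$ the $<\lambda$ earlier witnesses $\{x_\delta:\delta<\varepsilon\}$; this is what feeds Proposition~\ref{ord=card}(1) in the base case.

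Next I would introduce $\beta^{*}_\lambda$, the $\lambda$-analogue of Definition~\ref{bestaast} (the least $\gamma\leq\beta$ with $|[\gamma,\beta]|\leq\lambda$), and settle two special cases before reducing. For $\beta=\beta^{*}_\lambda+\lambda$, the implication (1)$\Rightarrow$(2) drops out of Corollary~\ref{cortransfer}(5) (taking its $\alpha=\beta^{*}_\lambda$ and its cardinal $=\lambda$), whose jump to $\lambda^{+}$ delivers $[\beta,\beta+\lambda^{+})$-\brfrt compactness in one stroke; in the sub-case $\beta^{*}_\lambda=0$, i.e.\ $\beta=\lambda$, one uses Corollary~\ref{cortransfer}(4) instead. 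The reverse implication (3)$\Rightarrow$(1) is the contrapositive ``stretch'': making the tail $[\beta^{*}_\lambda,\beta)$ indispensable and reindexing by a bijection that is the identity on $\beta^{*}_\lambda$, exactly as in Claim~1 of Theorem~\ref{t1}, forces every subcover to have order type $\geq\beta+\alpha$ (with Proposition~\ref{ord=card}(1) covering $\beta^{*}_\lambda=0$). For $\beta=\beta^{*}_\lambda$ with $\cf\beta^{*}_\lambda\leq\lambda$, I would reduce to the previous case by proving $[\beta^{*}_\lambda,\beta^{*}_\lambda]$-\brfrt compactness equivalent to $[\beta^{*}_\lambda+\lambda,\beta^{*}_\lambda+\lambda]$-\brfrt compactness, relocating the indispensable elements onto the positions $\beta^{*}_\lambda+i$. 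Finally, an arbitrary limit $\beta$ of cofinality $\leq\lambda$ is $\beta^{*}_\lambda+\rho$ with $|\rho|\leq\lambda$, and the general statement follows from the two cases together with the observation (as in the closing paragraph of the proof of Theorem~\ref{t1}) that adding a tail of cardinality $\leq\lambda$ preserves the equivalence.

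The hardest step will be the second special case, $\cf\beta^{*}_\lambda\leq\lambda$: one must check both that the relocated elements stay indispensable in the new cover and that deleting a $\leq\lambda$-sized set leaves the order type of $\beta^{*}_\lambda$ intact, the latter being precisely where the defining property of $\beta^{*}_\lambda$ (expressibility as a limit of ordinals of cofinality $>\lambda$, equivalently every proper final segment of $\beta^{*}_\lambda$ having cardinality $>\lambda$) is used. A secondary point requiring care is that $\lambda$ may be singular, so the cofinal sequence supplied by the lemma has length $\mu=\cf\beta$ possibly far below $\lambda$; this is why, in the base case $\beta=\lambda$, I prefer the variant that makes all $\lambda$ members indispensable, after which Proposition~\ref{ord=card}(1) applies uniformly, irrespective of the regularity of $\lambda$.
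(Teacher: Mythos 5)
Your plan faithfully reproduces the route the paper itself indicates: the $\lambda$-analogue of Lemma~\ref{t1lem1} (this is the paper's Lemma~\ref{t1lemlam}), the ordinal $\beta^{*\lambda}$, and the two special cases $\beta=\beta^{*\lambda}+\lambda$ and $\beta=\beta^{*\lambda}$ followed by the tail-adding reduction. As a reconstruction of the intended argument it is on target. But there is a genuine gap in the second special case, exactly at the point you call ``the hardest step'' and then treat as a routine adaptation: when $\cf \beta^{*\lambda}=\mu<\lambda$ (which the hypothesis ``cofinality $\leq\lambda$'' permits, and which can never happen in Theorem~\ref{t1}, where $\lambda=\omega$ forces $\mu=\omega=\lambda$), no variant of the witness-deletion technique can make $\lambda$ many members of a $[\beta,\beta]$-incompactness witness indispensable. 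If $P\subseteq\beta$ is the set of positions to be made indispensable, the member at position $\delta$ must shed every earlier witness it contains, so $\lambda$-$T_1$ forces $|P\cap\delta|<\lambda$ for every $\delta<\beta$; a set of size $\lambda$ with this property must be enumerated by a strictly increasing map of order type $\lambda$ cofinal in $\beta$, which exists only when $\cf\beta=\cf\lambda$. (This is precisely why Lemma~\ref{t1lemlam} carries the hypothesis $\cf\gamma=\cf\alpha$.) Consequently your relocation yields counterexamples indexed by $\beta+\alpha$ only for $|\alpha|<\lambda$; it never reaches $\beta+\lambda$, and the claimed equivalence of $[\beta,\beta]$-compactness with $[\beta+\lambda,\beta+\lambda]$-compactness is not proved. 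Your escape hatch (``the variant that makes all $\lambda$ members indispensable'') only applies when the index set itself has order type $\lambda$, i.e.\ in your base case, not when $\beta=\beta^{*\lambda}$ has small cofinality.

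Moreover this gap cannot be repaired, because in that regime the implication (3)$\Rightarrow$(1) is false, so the statement itself fails (the paper's two-line sketch inherits the same defect). Take $\lambda=\omega_1$, $\beta=\omega_2\cdot\omega$, let the underlying set be $S_\beta(\beta)$ as in Definition~\ref{saa}, and let $\tau$ consist of all sets $Z(\varepsilon)\setminus C$ with $\varepsilon<\beta$ and $C$ countable. Then $(X,\tau)$ is $\omega_1$-$T_1$, and it is not $[\beta,\beta]$-compact, since by Lemma~\ref{covers} the cover $(Z(\varepsilon))_{\varepsilon<\beta}$ has no subcover of order type $<\beta$. Yet it \emph{is} $[\beta+\omega_1,\beta+\omega_1]$-compact. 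Indeed, let $(Z(\varepsilon_\phi)\setminus C_\phi)_{\phi<\beta+\omega_1}$ be any cover and let $N$ be the set of points missed by the first $\beta$ members; then $N$ is countable, so those $\beta$ members together with countably many tail members form a subcover of order type $<\beta+\omega_1$. To see $|N|\leq\omega$: if $(f_j)_{j<\omega_1}$ were distinct points of $N$, then for each $\varepsilon\in E_0=\{\varepsilon_\phi:\phi<\beta\}$ the set $\{j : f_j(\varepsilon)=0\}$ is countable (it is contained in $\{j: f_j\in C_\phi\}$ for any $\phi<\beta$ with $\varepsilon_\phi=\varepsilon$); also $E_0$ has order type $\beta$ (by Lemma~\ref{covers}, since the tail contributes order type $<\omega_2$ and $\beta=\omega^{\omega_2+1}$ is closed under natural sums of smaller ordinals). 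Split $E_0$ into $\omega$ consecutive blocks of order type $\omega_2$; since each $\mathrm{supp}(f_j)$ has order type $<\beta$, it meets all but finitely many blocks in a set of size $\leq\aleph_1$, so a pigeonhole argument gives one block $E_0^{n_0}$ and uncountably many $j$ with $|E_0^{n_0}\cap \mathrm{supp}(f_j)|\leq\aleph_1$; counting the pairs $(j,\varepsilon)$ with $f_j(\varepsilon)=1$ in two ways then yields $\aleph_2\leq\aleph_1$, a contradiction. Thus condition (3) holds with $\alpha=\omega_1$ while condition (1) fails. Any correct version must restrict (2), (3), (4) --- for instance to ordinals $\alpha$ with $|\alpha|<\lambda$, with $\lambda^+$ replaced by $\lambda$ in (4) --- or else assume $\cf\beta=\cf\lambda$; under either restriction your argument (and the paper's) does go through.
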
  

The next lemma is proved as Lemma \ref{t1lem1}.
 
\begin{lemma} \labbel{t1lemlam}
Suppose that $\lambda$ is an infinite cardinal, 
$\alpha$ and $ \gamma $ are limit ordinals,
 $  \gamma  \leq \lambda $, 
$ \cf \gamma  = \cf \alpha $,
and $(\alpha _ \zeta ) _{ \zeta  \in \gamma  } $
is a strictly increasing sequence 
such that
$\sup_{ \zeta  \in \gamma  } \alpha_ \zeta  = \alpha  $.

If $X$ is $\lambda$-$T_1$ and not 
$[ \alpha , \alpha ]$-\brfrt compact,
then there is a counterexample 
$(O_ \delta ) _{ \delta \in \alpha} $  
to the
$[ \alpha , \alpha ]$-\brfrt compactness of $X$ 
with the property that, for every $ \zeta  \in \gamma $,
$O _{ \zeta  } $ is indispensable.
\end{lemma}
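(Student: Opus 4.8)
The plan is to build the required counterexample in two stages: first make the sets sitting at the \emph{cofinal} positions $\alpha_\zeta$ indispensable, exactly as in Lemma \ref{t1lem1}, and then relabel the index set so that these sets are moved to the front positions $\zeta<\gamma$. For the first stage (the $\lambda$-$T_1$ analogue of Lemma \ref{t1lem1}) I would start from any witness $(Q_\delta)_{\delta\in\alpha}$ to the failure of $[\alpha,\alpha]$-compactness which, by Lemma \ref{irredundant}, may be taken irredundant, so that $Q_\delta\not\subseteq\bigcup_{\varepsilon<\delta}Q_\varepsilon$ for every $\delta$. For each $\zeta<\gamma$ choose $x_\zeta\in Q_{\alpha_\zeta}\setminus\bigcup_{\varepsilon<\alpha_\zeta}Q_\varepsilon$ and set $P_\delta=Q_\delta\setminus\{x_\zeta\mid\alpha_\zeta<\delta\}$. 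Since $\sup_\zeta\alpha_\zeta=\alpha$, for each fixed $\delta<\alpha$ there is $\eta<\gamma$ with $\delta<\alpha_\eta$, whence $\{\zeta\mid\alpha_\zeta<\delta\}\subseteq\eta$ has cardinality $|\eta|<\lambda$ (because $\eta<\gamma\le\lambda$ and $\lambda$ is a cardinal); thus each $P_\delta$ differs from $Q_\delta$ by fewer than $\lambda$ points and so lies in $\tau$, as $X$ is $\lambda$-$T_1$. Routine checks (the $x_\zeta$ are pairwise distinct, $x_\zeta$ survives only in $P_{\alpha_\zeta}$, and $P_\delta\subseteq Q_\delta$) show that $(P_\delta)_{\delta\in\alpha}$ is again a cover, is still a counterexample to $[\alpha,\alpha]$-compactness, and has each $P_{\alpha_\zeta}$ indispensable in the sense of Definition \ref{indisp}.

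For the second stage, set $A=\{\alpha_\zeta\mid\zeta<\gamma\}$ (a cofinal subset of $\alpha$ of order type $\gamma$) and $B=\alpha\setminus A$. I would produce a bijection $f\colon\alpha\to\alpha$ with $f(\zeta)=\alpha_\zeta$ for $\zeta<\gamma$ and $f\upharpoonright[\gamma,\alpha)$ a bijection onto $B$, chosen so that $f$ carries every set of order type $<\alpha$ to a set of order type $<\alpha$; granting such an $f$, put $O_\delta=P_{f(\delta)}$. Then $(O_\delta)_{\delta\in\alpha}$ is a rearrangement of $(P_\delta)$, hence again a cover, and $O_\zeta=P_{\alpha_\zeta}$ is indispensable for every $\zeta<\gamma$, which is exactly the conclusion sought. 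Counterexample-ness transfers too: a subfamily $(O_\delta)_{\delta\in H}$ covers $X$ iff $(P_\varepsilon)_{\varepsilon\in f(H)}$ does, which by the first stage forces $f(H)$ to have order type $\alpha$, and the order-type condition on $f$ then forces $H$ to have order type $\alpha$. In the degenerate case $\gamma=\alpha$ one simply takes $\alpha_\zeta=\zeta$ and $f$ the identity, so no relabelling is needed.

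The step I expect to be the main obstacle is the construction and verification of this $f$, and here the hypothesis $\cf\gamma=\cf\alpha=:\mu$ is indispensable. Writing $\alpha$ as the block decomposition $[0,\alpha_0)\sqcup\bigsqcup_{\zeta<\gamma}[\alpha_\zeta,\alpha_{\zeta+1})$, a subset of $\alpha$ has order type $\alpha$ only when its contributions to these blocks are ``full'' on a $\mu$-cofinal set of indices $\zeta$; the map $f$ prepends the single left endpoints $\alpha_\zeta$ at the front and reshuffles the block interiors into $B$, and one must verify by the ordinal-arithmetic bookkeeping already used in Lemma \ref{lemshifted} and Theorem \ref{cork} that this cannot convert a non-full configuration into a full one, so that ``order type $<\alpha$'' is preserved. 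The mechanism is visible in the representative cases (the block swap for $\alpha=\omega_1\cdot2$, $\gamma=\omega_1$, and the tower $\alpha=\omega_1\cdot\omega$, $\gamma=\omega$): reaching order type $\alpha$ requires filling $\mu$-cofinally many blocks, and that property is invariant under the relabelling. Establishing this order-type invariance in full generality is the only nonroutine point; everything else proceeds exactly as in Lemma \ref{t1lem1}.
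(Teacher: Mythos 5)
Your first stage is precisely the paper's proof: the paper disposes of this lemma with the single line that it ``is proved as Lemma \ref{t1lem1}'', i.e.\ pass to an irredundant witness via Lemma \ref{irredundant}, choose $x_\zeta\in Q_{\alpha_\zeta}\setminus\bigcup_{\varepsilon<\alpha_\zeta}Q_\varepsilon$, and delete $\{x_\zeta\mid \alpha_\zeta<\delta\}$ from $Q_\delta$; the one genuinely new point at the $\lambda$-$T_1$ level is the cardinality bound, and you got it exactly right: $\{\zeta\mid\alpha_\zeta<\delta\}\subseteq\eta$ for some $\eta<\gamma\le\lambda$, so the deleted set has size $<\lambda$ and $P_\delta\in\tau$. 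This produces a counterexample in which $O_{\alpha_\zeta}$ is indispensable for every $\zeta<\gamma$ --- which is the exact analogue of Lemma \ref{t1lem1} (where the conclusion reads $O_{\alpha_n}$) and is what the application in Proposition \ref{t1lambda} requires, since there, mirroring Claims 1 and 2 of Theorem \ref{t1}, the indispensable sets must sit at the positions $\alpha_\zeta$ themselves. The subscript $O_\zeta$ in the statement is a misprint for $O_{\alpha_\zeta}$; the paper's proof establishes only the latter.

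Your second stage --- relabelling so that the indispensable sets occupy the initial positions $\zeta<\gamma$ --- contains a genuine gap, and not merely the ``nonroutine point'' you flag: the bijection $f$ you postulate does not exist in general. Take $\alpha=\omega_1+\omega$, $\gamma=\omega$, $\alpha_n=\omega_1+n$, a legitimate instance of the hypotheses ($\alpha,\gamma$ limit, $\cf\gamma=\cf\alpha=\omega$, $\gamma\le\lambda$). Then $A=[\omega_1,\alpha)$, $B=[0,\omega_1)$, and \emph{any} injection $f$ with $f(n)=\alpha_n$ for $n<\omega$ and $f([\omega,\alpha))\subseteq B$ sends $H=[0,\omega_1)$, of order type $\omega_1<\alpha$, to $f(H)=A\cup f([\omega,\omega_1))$; since $f$ is injective, $f([\omega,\omega_1))$ is an uncountable subset of $\omega_1$, hence of order type $\omega_1$, so $f(H)$ has order type $\omega_1+\omega=\alpha$. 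Thus no $f$ of your kind carries every set of order type $<\alpha$ to a set of order type $<\alpha$, the transfer of counterexample-ness breaks down, and the hypothesis $\cf\gamma=\cf\alpha$ cannot rescue it. The correct move is simply to stop after your first stage: state and use the lemma with $O_{\alpha_\zeta}$ indispensable, as the paper does. (Whether the literal statement with $O_\zeta$ could be salvaged by a completely different construction is a separate question, but it is not what the paper proves or uses, and your route to it fails.)
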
 

\begin{proof}[Proof of Proposition \ref{t1lambda}.]
If $\beta$ is any ordinal, let  $\beta ^{* \lambda } $
  be the smallest ordinal $ \leq \beta$  
such that $|[ \beta ^{* \lambda } , \beta ]| \leq \lambda $.
Thus, $\beta ^{* \lambda } $ is  the largest ordinal $ \leq \beta$
which is either $ 0$, or  has cofinality $>\lambda$,
or  can be written as a limit of ordinals of cofinality $>\lambda$.

The proof now follows the lines of the proof of 
Theorem  \ref{t1}: prove 
first the result
in the case when 
$ \beta = \beta ^{* \lambda }+ \lambda  $, and then
when $ \beta = \beta ^{* \lambda } $ and
$ \omega \leq cf \beta ^{* \lambda } \leq \lambda  $.
\end{proof}

 \section{Related notions and problems} \labbel{concl}

The spaces introduced in Examples \ref{exex}(3) and \ref{samecard} 
are normal topological spaces (with a base of clopen sets),
and they thus provide certain limits to provable results
for $[ \beta , \alpha ]$-compactness of normal spaces.
However, the theory developed so far appears to be not sharp 
enough to deal with such spaces.

As a very rough hypothesis, we conjecture that there is not
very much difference in the theory of
$[ \beta , \alpha ]$-compactness for, 
say, $T_1$ spaces and Tychonoff spaces.
We also conjecture that we can 
get some more theorems under the additional assumption of normality.
All the above rough hypotheses need to be verified; the present note
appears to be already long enough, thus we postpone the discussion
of such matters
to a subsequent work.

\begin{problem} \labbel{char} 
Give characterizations, similar to the ones given in Theorems
\ref{iff1} and \ref{iff2},  
for those pairs
of ordinals $\alpha$ and $\beta$ such that 
$[ \alpha , \alpha ]$-\brfrt compactness implies
$[ \beta , \beta ]$-\brfrt compactness,
for general topological spaces and, respectively,
for topological spaces satisfying some given separation axiom.
Of course, the spaces introduced in  Examples \ref{exex},
 \ref{exgen}, \ref{samecard},  \ref{condiscr}, as well as the spaces 
$X_ \tau ( \beta , \alpha )$ and $X_T ( \beta , \alpha )$
of Definitions \ref{saa}  will be relevant
to the solution of this problem.
\end{problem}

\begin{remark} \labbel{linlind} 
Indeed, for normal spaces, some problems might be open 
even restricted to cardinal compactness.
For example, it is easy to see that $X$ is a linearly 
Lindel\"of not Lindel\"of space (see \cite{AB})
if and only if $X$ is 
$[ \kappa  , \kappa  ]$-\brfrt compact,
for every regular uncountable cardinal $\kappa$, but
there is some uncountable cardinal $\lambda$ 
(necessarily  of cofinality $ \omega$) such that 
$X$ is not $[ \lambda , \lambda  ]$-\brfrt compact.
\end{remark}   

\begin{problem} \labbel{prod} 
Study the behavior of 
$[ \beta , \alpha ]$-\brfrt compactness of topological 
spaces with respect to products.

This problem might have some interest, since
 nontrivial results about cardinal compactness
of products of topological spaces are already known. 
See, e. g., \cite{Sto,GS,ScSt,Cprepr,C}.
See \cite{topproc,topappl} for further results and references.
\end{problem}

\begin{problem} \labbel{othercpn}
Study the mutual relationships among
$[ \beta , \alpha ]$-\brfrt compactness and other
compactness properties, either defined in terms
of covering properties or not.
 \end{problem}

\begin{definition} \labbel{oba}
We can also generalize the present notion 
of ordinal compactness to the relativized  notion introduced in
\cite{LiF}.

If $X$ is a topological space, and
$\mathcal F$ is a family of subsets of $X$,
let us say that $X$ is
$\mathcal F$-$ [ \beta , \alpha  ]$-\emph{compact}
if and only if the following condition holds.

For every sequence  $( C _ \delta  ) _{ \delta  \in \alpha   } $
 of closed sets of $X$, if,
for every $H \subseteq \alpha  $ with order type $< \beta $,
there exists $F \in \mathcal F$ such that   
$ \bigcap _{ \delta \in H}  C_ \alpha \supseteq F$,
then  
$ \bigcap _{ \delta \in \alpha  }  C_ \alpha \not= \emptyset $.

With this notation, 
$ [ \beta , \alpha  ]$-compactness
turns out to be the particular case of 
$\mathcal F$-$ [ \beta , \alpha  ]$-compactness
when $\mathcal F$ is the set of all singletons of $X$.

The particular case when $\mathcal F$ is the set of
all nonempty open sets of $X$ might have particular interest.
The corresponding  notion when 
both $\alpha$ and $\beta$ are cardinals has been 
studied in \cite{LO}. 

Still another generalization is suggested by 
\cite{LiF}. 
If
$\mathcal F$ is a family of subsets of $X$, let us say that $X$ is
$ [ \beta , \alpha  ]$-\emph{compact relative to} $\mathcal F$  
if and only if the following condition holds.

For every sequence  $( F _ \delta  ) _{ \delta  \in \alpha  } $
 of elements of $\mathcal F$, if,
for every $H \subseteq \alpha  $ of order type $ < \beta $, 
$ \bigcap _{ \delta \in H}  F_ \delta  \not= \emptyset $,
then  
$ \bigcap _{ \delta \in \alpha}  F_ \alpha \not= \emptyset $.
For a topological space $X$, 
$ [ \beta , \alpha  ]$-compactness is the same as
$ [ \beta , \alpha  ]$-compactness relative to the family
of all closed subsets of $X$.  
 \end{definition}

\begin{problem} \labbel{logics}
  A similar definition of ordinal compactness can 
be given for abstract logics. See \cite{E} for definitions
and background about logics.

Let us say that a logic $\mathcal L$ is $( \alpha , \beta )$-\emph{compact}   
if and only if, for every $\alpha$-indexed set 
$ (\sigma_ \delta ) _{ \delta \in \alpha } $
of $\mathcal L$-sentences, if, 
for every $H \subseteq \alpha$ with 
order type $<\beta$,   
$  \{ \sigma_ \delta \mid    \delta \in H \} $
has a model, then
$ \{ \sigma_ \delta \mid    \delta \in \alpha \}  $
has a model.

Notice the reversed order of $\alpha$ and $\beta$, to be 
consistent with the standard notation used in the literature about compactness
of logics.

We do not know whether ordinal compactness for logics is really
a new notion, that is, whether or not it can be expressed in terms
of cardinal compactness only. See, e. g., \cite{Ma} for notions of cardinal compactness 
for logics. 
 \end{problem}

The idea of defining  $[ \beta, \alpha]$-\brfrt compactness
came to us after reading the  definition of an $( \alpha , \kappa )$-regular
ultrafilter
 in \cite[p. 237]{BK}.

\begin{definition} \labbel{general}    
We can define an even more general notion of compactness.
If $Z$ is any set, and $W$ is a subset of the power set of $Z$, say 
that a topological space is \emph{$[W, Z]$-compact} if and only if, whenever
$(O_z) _{ z \in Z } $  is an open cover of $X$, then there is
$w \in W$ such that  
$(O_z) _{ z \in w } $  is still a cover of $X$.

The usual notion of $[\mu, \lambda ]$-compactness is the particular case
when $Z$ has cardinality $\lambda$, and $W$ is the set of all subsets 
of $Z$ of cardinality $<\mu$.

More generally, our notion of 
$[ \beta , \alpha  ]$-compactness is the particular case
when $Z= \alpha $, and $W$ is the set of all subsets 
of $ \alpha $ of order type $< \beta $.

We do not know whether there are other significant particular cases.
 \end{definition}

\end{document}